\documentclass[11pt]{amsart}
\usepackage{amsfonts}
\usepackage{amssymb}
\usepackage{graphicx,ulem}
\usepackage{pstricks}
\usepackage{amsmath}
\usepackage{amsmath}
\usepackage{amsxtra}
\usepackage{pdfpages}
\usepackage{hyperref}

\theoremstyle{plain}
\newtheorem{theorem}{Theorem}[section]
\newtheorem{lemma}[theorem]{Lemma}
\newtheorem{proposition}[theorem]{Proposition}
\newtheorem{corollary}[theorem]{Corollary}

\newtheorem{definition}[theorem]{Definition}

\newtheorem{problem}[theorem]{Problem}

\theoremstyle{definition}

\newtheorem{example}[theorem]{Example}
\newtheorem{remark}[theorem]{Remark}

\numberwithin{equation}{subsection} 
\newcommand{\tr}{\operatorname{tr}\,}
\newcommand{\shift}{\operatorname{shift}\,}
\newcommand{\re}{\operatorname{Re}\,}

\makeatletter
\def\@tocline#1#2#3#4#5#6#7{\relax
  \ifnum #1>\c@tocdepth 
  \else
    \par \addpenalty\@secpenalty\addvspace{#2}%
    \begingroup \hyphenpenalty\@M
    \@ifempty{#4}{%
      \@tempdima\csname r@tocindent\number#1\endcsname\relax
    }{%
      \@tempdima#4\relax
    }%
    \parindent\z@ \leftskip#3\relax \advance\leftskip\@tempdima\relax
    \rightskip\@pnumwidth plus4em \parfillskip-\@pnumwidth
    #5\leavevmode\hskip-\@tempdima
      \ifcase #1
       \or\or \hskip 1em \or \hskip 2em \else \hskip 3em \fi%
      #6\nobreak\relax
    \hfill\hbox to\@pnumwidth{\@tocpagenum{#7}}\par
    \nobreak
    \endgroup
  \fi}
\makeatother

\definecolor{darkgreen}{rgb}{.1,.5,0}

\newcommand\cre[1]{{\color{red}#1}}

\newcommand\cbl[1]{{\color{blue}#1}}

\begin{document}
\title{Semi-hyponormality of commuting pairs \\ of Hilbert space
operators}
\author{Ra\'{u}l E. Curto}
\address{Department of Mathematics, The University of Iowa, Iowa City, Iowa
52242}
\email{raul-curto@uiowa.edu}
\author{Jasang Yoon}
\address{School of Mathematical and Statistical Sciences, The University of
Texas Rio Grande Valley, Edinburg, Texas 78539, USA}
\email{jasang.yoon@utrgv.edu}
\thanks{The first named author was partially funded by National Science
Foundation (U.S.) grant DMS-2247167.}
\subjclass{Primary 47B20, 47B37, 47A13, 28A50; Secondary 44A60, 47-04, 47A20}
\keywords{subnormal, hyponormal, semi-hyponormal, weakly hyponormal, commuting pairs of operators}

\begin{abstract}
We first find an explicit formula for the square root of positive $2 \times 2$ operator matrices with commuting entries, and then use it to define and study semi-hyponormality for commuting pairs of Hilbert space operators. \ For the well-known $3$--parameter family $W_{(\alpha,\beta)}(a,x,y)$ of $2$--variable weighted shifts, we completely identify the parametric regions in the open unit cube where $W_{(\alpha,\beta)}(a,x,y)$ is subnormal, hyponormal, semi-hyponormal, and weakly hyponormal. \ As a result, we describe in detail concrete sub-regions where each property holds. \ For instance, we identify the specific sub-region where weak hyponormality holds but semi-hyponormality does not hold, and vice versa. \ To accomplish this, we employ a new technique emanating from the homogeneous orthogonal decomposition of $\ell^2(\mathbb{Z}_+^2)$. \ The technique allows us to reduce the study of semi-hyponormality to positivity considerations of a sequence of $2 \times 2$ scalar matrices. \ It also requires a specific formula for the square root of $2 \times 2$ scalar and operator matrices, and we obtain that along the way. \ As an application of our main results, we show that the Drury-Arveson shift is {\it not} semi-hyponormal. \ Taken together, the new results offer a sharp contrast between the above-mentioned properties for unilateral weighted shifts and their $2$--variable counterparts. 
\end{abstract}

\maketitle

\tableofcontents


\section{\label{Sec0}Introduction}

Let $\mathcal{H}$ be a complex Hilbert space and let $\mathcal{B}(\mathcal{H})$
be the algebra of bounded linear operators acting on $\mathcal{H}$. \ The sets of positive (resp. positive {\it and} invertible)
operators in $\mathcal{B}(\mathcal{H})$ will be denoted by $\mathcal{B}_{+}(\mathcal{H})$ (resp. $\mathcal{B}_{++}(\mathcal{H})$); the adjoint of an operator $T \in \mathcal{B}(\mathcal{H})$ will be denoted by $T^*$. \ An operator $T \in \mathcal{B}(\mathcal{H})$ is said to be {\it normal} if $T^*T = TT^*$, {\it subnormal} if $T$ is the restriction of a normal operator to an invariant subspace, {\it hyponormal} if $T^*T \ge TT^*$ (i.e., the commutator $[T^*,T]$ is in $\mathcal{B}_{+}(\mathcal{H})$), and {\it semi-hyponormal} if $\sqrt{T^*T} - \sqrt{TT^*} \in \mathcal{B}_{+}(\mathcal{H})$. \ (For detailed properties of semi-hyponormality, we refer the reader to \cite{And,Alu,FHM,Xia}.

There are multivariable versions of the above notions, with the most studied being normality, subnormality, and hyponormality for $n$--tuples of commuting operators. \ As expected, a commuting $n$--tuple $\mathbf{T} \equiv (T_1, \ldots, T_n)$ is {\it normal} if for $i=1,\ldots,n$, the operator $T_i$ is normal; {\it subnormal} if $\mathbf{T}$ is the restriction of a normal $n$--tuple to a common invariant subspace; and {\it hyponormal} if the self-commutator of $\mathbf{T}$, defined as $[\mathbf{T}^*,\mathbf{T}] := ([T_j^*,T_i])_{i,j=1}^n$, is positive as an operator acting on $\mathcal{H} \oplus \cdots \oplus \mathcal{H}$ ($n$ copies of $\mathcal{H}$). \ The Lifting Problem for Commuting Subnormals (LPCS) asks whether a subnormal $n$--tuple $\mathbf{T}$, acting on $\mathcal{H}$, always admits a normal extension $\mathbf{N}$, acting on a common Hilbert space $\mathcal{L} \supseteq \mathcal{H}$. \ It is well known that LPCS has been answered in the negative, so efforts have been made to add conditions to guarantee the lifting. \ In \cite[Conjecture 3.3]{CMX}, it was postulated that for a pair $\mathbf{T} \equiv (T_1,T_2)$ of commuting subnormal operators, the hyponormality of $\mathbf{T}$ was sufficient for a lifting. \ In \cite{CuYo1}, we presented examples of hyponormal commuting pairs of subnormal operators on $\ell^2(\mathbb{Z}_+^2)$ without a normal extension. \ Our fundamental class of examples involved a $3$--parametric class of $2$--variable weighted shifts. \ In this paper, we discover a number of new insights into the theory of commuting pairs that satisfy a property {\it weaker} than hyponormality, which will call semi-hyponormality. \ In the process, we will establish significant links with the notion of weak hyponormality for commuting pairs, introduced by J. Conway and W. Szymanski in \cite{ConSzy}. \ 

For unilateral weighted shifts, it is well-known that there is no distinction between hyponormality and semi-hyponormality. \ In sharp contrast with this case, the notions of hyponormality, semi-hyponormality, and weak hyponormality in the bivariate case are all quite different, as will be shown in the sequel. \ Although many of the notions and results in this paper easily generalize to $n$--tuples with $n \ge 3$, we have chosen to stay at the bivariate level, since the properties we wish to study already manifest themselves quite well at $n=2$. 

Let $\mathbf{T} \equiv (T_1,T_2)$ be a commuting pair of bounded operators on $\mathcal{H}$. \ Naturally associated with the pair, there are two $2 \times 2$ operator matrices,
$$
L := \left[
\begin{array}{cc}
T_1^* T_1 & T_2^* T_1 \\
T_1^* T_2 & T_2^* T_2
\end{array}
\right] \quad \quad \textrm{ and } \quad \quad 
R := \left[
\begin{array}{cc}
T_1 T_1^* & T_1 T_2^* \\
T_2 T_1^* & T_2 T_2^*
\end{array}
\right] = 
\left[
\begin{array}{c}
T_1 \\ T_2 
\end{array}
\right] 
\left[
\begin{array}{c}
T_1 \\ T_2 
\end{array}
\right]^*.  
$$
Recall that $\mathbf{T}$ is said to be hyponormal if the matrix $([T_j^*,T_i])_{i,j=1}^2$ (equivalently, $L-R$) is a positive operator; that is, if $L \ge R$. \ Since $R$ is Gramian, $R$ is automatically positive, so the hyponormality of $\mathbf{T}$ implies both the positivity of $L$ and the hyponormality of $T_1$ and $T_2$ as single operators. \ In addition, the hyponormality of $\mathbf{T}$ implies that $\sqrt{L} \ge \sqrt{R}$, by appealing to the L\"owner-Heinz Inequality (Lemma \ref{lemma2}). \ It is also worth mentioning that the transpose of $L$ (as an operator matrix) is a Gramian, that is,
$$
L^t = \left[
\begin{array}{cc}
T_1^* T_1 & T_1^* T_2 \\
T_2^* T_1 & T_2^* T_2
\end{array}
\right] = 
\left[
\begin{array}{cc}
T_1 & T_2 
\end{array}
\right]^* 
\left[
\begin{array}{cc}
T_1 & T_2 
\end{array}
\right].  
$$
Thus, while the positivity of $L$ may fail (as we will see in the sequel), the positivity of its transpose is always guaranteed; thus, Example \ref{Ex1} provides yet another instance of a $2 \times 2$ positive operator matrix whose transpose is {\it not} positive. \ 

Based on the previous observations, and consistent with the classical definition of \linebreak semi-hyponormality for single operators, we now proceed to define semi-hyponormality for pairs of operators, i.e., semi-hyponormality in the bivariate case.

\begin{definition} \label{def11}
Let $\mathbf{T} \equiv (T_1,T_2)$ be a pair of Hilbert space operators, and let $L$ and $R$ be as above. \ 

(i) \ $\mathbf{T}$ is said to be {\it semi-hyponormal} if $L \ge 0$ and $\sqrt{L} \ge \sqrt{R}$. 

(ii) (cf. \cite{ConSzy})$ \ \mathbf{T}$ is said to be {\it weakly hyponormal} if every operator in the linear span of $T_1$ and $T_2$ is hyponormal.
\end{definition}

It is straightforward to verify that:
$$
\textrm{subnormal } \Longrightarrow \textrm{ hyponormal } \Longrightarrow \left\{\begin{array}{c} \textrm{semi-hyponormal} \\
																																																	\textrm{and} \\
																																																	\textrm{weakly hyponormal}
																																								  \end{array}
																													\right.														  
$$
The above diagram gives a perspective of the relative position of properties weaker than subnormality. \ In the opposite direction, one defines $\mathbf{T}$ to be \textit{matricially quasinormal} if
each $T_{i}$ commutes with each $T_{j}^{\ast }T_{k}$, (jointly) \textit{quasinormal} if each $T_{i}$ commutes with each $%
T_{j}^{\ast }T_{j}$, and \textit{spherically quasinormal} if each $T_{i}$
commutes with $\sum_{j=1}^{2}T_{j}^{\ast }T_{j}$, where $i,j,k = 1, 2$. \ As shown in \cite{AtPo} and \cite{Gle}, we have

$$
\begin{array}{ccc}
\textrm{normal} &&\\
\big\Downarrow &&\\
\textrm{matricially quasinormal } &\Longrightarrow &\textrm{ (jointly) quasinormal } \\
&&\big\Downarrow \\
&&\textrm{ spherically quasinormal } \\
&& \big\Downarrow \\
&& \textrm{subnormal}.
\end{array}
$$
On the other hand, the results in \cite{CuYo6} and \cite{Gle} show that the converse implications do not hold. \ However, for $2$--variable weighted shifts, matricially quasinormal and (jointly) quasinormal are equivalent, as we show in Lemma \ref{quasi}. 

In this paper, our emphasis is on semi-hyponormality and weak hyponormality, and how they relate to hyponormality and subnormality. \ We will first derive some basic structural properties of semi-hyponormality for commuting pairs of operators; this will include an analysis of positivity for $2 \times 2$ operator matrices, the calculation of their square roots, and criteria for the commutativity of their operator entries. \ Next, we will focus on the case of bivariate weighted shifts, and study in detail the fundamental class of bivariate weighted shifts determined by the three parameters $a,x,y$ in the open unit interval $(0,1)$; that is, $\mathbf{T}$ is the bivariate weighted shift $W_{(\alpha,\beta)}(a,x,y)$ defined by
$$
\alpha_{(0,0)}:=x, \; \alpha_{(0,k_2)}:=a \,\, (\textrm{for all }k_2 \ge 1), \; \alpha_{(k_1,k_2)}:=1 \, \, (\textrm{for all }k_1 \ge 1, k_2 \ge 0);
$$
$$
\beta_{(0,0)}:=y, \; \beta_{(k_1,0)}:=\dfrac{ay}{x} \,\, (\textrm{for all }k_1 \ge 1), \; \beta_{(k_1,k_2)}:=1 \, \, (\textrm{for all }k_1 \ge 1, k_2 \ge 0). \label{defaxy}
$$
The weight diagram for $W_{(\alpha,\beta)}(a,x,y)$ is shown in Figure \ref{Figure 1}(ii) on page \pageref{Figure 1}.

The class $\mathcal{W}:=\{W_{(\alpha,\beta)}(a,x,y): a,x,y \in (0,1)^3 \textrm{ and } ay<x \}$ is the simplest nontrivial family of $2$--variable weighted shift that cannot be represented as a tensor product $(I \otimes W_\omega,W_\eta \otimes I)$ (for which the notions we discuss here can be derived from those of $W_\omega$ and $W_\eta$). \ In short, if we wish to stay away from trivial doubly commuting pairs, the first nontrivial instance is provided by the class $\mathcal{W}$. 

To exhibit concretely the above-mentioned differences, we will partition the open unit cube $(0,1)^3$ of triples $(a,x,y)$ into regions characterizing the subnormality, hyponormality, \linebreak semi-hyponormality, and weak subnormality of the above-mentioned bivariate weighted shift $W_{(\alpha,\beta)}(a,x,y)$. \ This will allow us to describe in detail concrete sub-regions, where, for instance, weak hyponormality holds but semi-hyponormality does not hold, and vice versa (see Figure 7). \ As an application of our main results, we show that the Drury-Arveson shift is {\it not} semi-hyponormal. \ These new results for $2$--variable weighted shifts offer a revealing contrast with semi-hyponormality in one variable. 

In view of the definitions of hyponormality and semi-hyponormality, knowing when a $2 \times 2$ operator matrix is positive, is of great importance and significance. \ In 1959, J.L. Smul'jan \cite{Smu} proved that a $2 \times 2$ operator matrix
$\left(
\begin{array}{ll}
A & B \\
B^{\ast } & C%
\end{array}%
\right)$ 
is positive if and only if $A,C \ge 0$, and $B=\sqrt{A}E\sqrt{C}$ for some contraction $E$ (cf. \cite[Proposition 2.1]{CF}). \ More recently, M.S. Moslehian, M. Kian and Q. Xu \cite{MKX} obtained a characterization of the positivity of $2\times 2$ block matrices of operators on Hilbert space, in case $A \in \mathcal{B}_{++}(\mathcal{H})$. \ That is, they generalize Choleski's Algorithm for matrices 
$M:=\left(
\begin{array}{ll}
A & B \\
B^{\ast } & C%
\end{array}%
\right)
\in \mathcal{B}(\mathcal{H}\oplus \mathcal{H})$ with $B,C\in
\mathcal{B}(\mathcal{H})$ and $A\in \mathcal{B}_{++}(\mathcal{H})$. \ They proved that $M$ is
positive if and only if $C\geq B^{\ast }A^{-1}B$. \ When $\dim \mathcal{H}<\infty$, $M$ is positive if and only if $C\geq B^{\ast }A^{\dag }B$, where $A^{\dag }$ is the Moore-Penrose inverse of $A$. \ 

For an orthogonally
diagonalizable $n \times n$ matrix $A$ with $A=UDU^*$, where $U$ is a unitary and $D$ is diagonal, it is well known that $A^{\frac{1}{2}}=UD^{\frac{1}{2}}U^*$, and $U$ can be chosen to be the matrix of eigenvectors whose eigenvalues are the diagonal entries of $D$. \ Our first task in this paper is to use an appropriate version of the Cayley-Hamilton Theorem to find an answer to the following question.

\begin{problem}
(\cite[Remark 5.2]{MKX}; see also \cite{MO}) \label{Prob1} Find out an explicit formula for $M^{%
\frac{1}{2}}$ under certain restrictions on $X$ and $B$, where $M=\left(
\begin{array}{ll}
A & X \\
X^{\ast } & B
\end{array}
\right) \in \mathcal{B}(\mathcal{H}\oplus \mathcal{H})$, $B,X\in \mathcal{B}(%
\mathcal{H})$, and $A\in \mathcal{B}_{++}(\mathcal{H})$.
\end{problem}

Problem \ref{Prob1} is related to the Kato Square Root Problem, formulated by T. Kato in 1953 \cite{Kato}, and solved in the affirmative in 2002 for the case of second order elliptic operators on $\mathbb{R}^n$ \cite{AHLMT}; that is, given an $n \times n$ matrix of $L^\infty$ coefficients and a second order divergence form operator $Lf := -\operatorname{div}(A \nabla f)$, the domain of $\sqrt{L}$ coincides with the Sobolev space $H^1(\mathbb{R}^n)$ and $\left\|\sqrt{L}f\right\|_2 \le \left\|\nabla f\right\|_2$ for every $f \in H^1(\mathbb{R}^n)$. \ While for unbounded operators the solution includes the matter of domain for the square root, in the case of bounded operators we will focus primarily on an algebraic solution as explicit as possible. \ (For more information on square roots of classical bounded operators, the reader is referred to \cite{MPR}.) 

The solution of Problem \ref{Prob1} will provide a new tool to study the differences between hyponormality and semi-hyponormality. \ To solve Problem \ref{Prob1}, we will employ a new technique which exploits the existence of a chain of orthogonal finite dimensional subspaces of $\ell^2(\mathbb{Z}_+^2)$ of increasing dimension, denoted by $\{\mathcal{K}(n)\}_{n \ge 0}$; the formal definition of $\mathcal{K}(n)$ is given in Subsection \ref{subsection25}. \ This matricial representation of $2$--variable weighted shifts is based on the decomposition of $\ell^2(\mathbb{Z}_+^2)$ along subspaces $\mathcal{K}(n)$ determined by cross diagonals drawn through the lattice points of 
$\mathbb{Z}_+^2$, and organized according to the degree-lexicographic order. \ These subspaces are reducing for $[T_{i}^{\ast },T_{j}]$ and $[T_{i},T_{j}^{\ast }]$ (for all $i,j \ge 0$). \ As a result, the calculation of the square roots of $L$ and $R$ (needed to determine semi-hyponormality) can therefore be carried out at the level of $2 \times 2$ scalar matrices. \ For the above-mentioned class $\mathcal{W} =\{W_{(\alpha,\beta)}(a,x,y): 0<a,x,y<1 \textrm{ and } ay<x\}$, this new matricial representation will yield $2 \times 2$ matrices for which square roots can be easily computed.

We alert the reader to the existence of two other multivariate notions of semi-hyponormality. \ The first one was introduced by D. Xia \cite{Xia} in 1983, and it uses a commuting $n$--tuple $\mathbf{U}$ of unitary operators and a single positive operator $A$ ; the $(n+1)$--tuple $(\mathbf{U},A)$ is called semi-hyponormal when a certain operator inequality involving $A$ and $\mathbf{U}$ holds. \ In the special case when $n=1$, the pair $(A,U)$ is semi-hyponormal if and only if the operator $UA$ is semi-hyponormal. \ D. Xia's notion is quite different from ours (even in the case of unilateral weighted shifts, since in the polar decomposition of such shifts the partial isometric factor does not admit a unitary extension), and tailor-made to the requirements needed for the construction of a singular integral model for semi-hyponormality in several variables. \ The second notion is based on the spherical polar decomposition of a commuting pair \cite{KKY,KKY2}; that is, if $\left(
\begin{array}{c}
T_{1} \\
T_{2}
\end{array}
\right) =V\left\vert T\right\vert$, where $\left\vert T\right\vert:=\sqrt{T_1^*T_1+T_2^*T_2}$ and $V \equiv \left(
\begin{array}{c}
T_{1} \\
T_{2}
\end{array}
\right)$ is a joint partial isometry, one can compare the positive parts of $\mathbf{T}$ and $\mathbf{T}^*$ and define spherical $p$--hyponormality, as follows: for $0<p<1$, the commuting pair $\mathbf{T}$ is spherically $p$--hyponormal if $\left\vert \mathbf{T}\right\vert \ge \left\vert \mathbf{T}^*\right\vert$ \cite{KKY,KKY2}. \ Although this notion has interesting connections with the spherical Aluthge transform, it does not relate well to the matrices $L$ and $R$ which form the essential foundation for Definition \ref{def11} and the topics in this paper.


\section{Notation and Preliminary Results}


\subsection{Some Auxiliary Results}

For the reader's convenience, in this subsection we gather several well-known auxiliary results which are needed for the proofs of the main results in this article.

\begin{lemma}
\cite{Fur} \label{lemma2} (L\"{o}wner-Heinz Inequality) \ Let $\mathcal{H}$
be a complex Hilbert space, and let \linebreak $A,B\in \mathcal{B}(\mathcal{H})$ such
that $0\leq B\leq A$, then $0\leq B^{\alpha }\leq A^{\alpha }$ for any $%
\alpha \in \left[ 0,1\right] $.
\end{lemma}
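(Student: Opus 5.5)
The plan is the classical resolvent-integral argument, which reduces the statement to operator monotonicity of inversion. If $\alpha=0$, both sides equal $I$. If $\alpha=1$, the claim is the hypothesis. So fix $\alpha\in(0,1)$.

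First I handle the invertible case by replacing $A,B$ with $A+\varepsilon I$ and $B+\varepsilon I$ for $\varepsilon>0$. The order is preserved, since $B+\varepsilon I\le A+\varepsilon I$, and both operators now lie in $\mathcal{B}_{++}(\mathcal{H})$.

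\textbf{Step 1: inversion reverses order.} If $0< B\le A$ with both invertible, then $A^{-1}\le B^{-1}$. To see this, conjugate by $A^{-1/2}$ to get $C:=A^{-1/2}BA^{-1/2}\le I$. Then $C$ is positive, invertible and has spectrum in $(0,1]$, so $C^{-1}\ge I$. Rewriting $C^{-1}$ gives $A^{1/2}B^{-1}A^{1/2}\ge I$, and conjugating back by $A^{-1/2}$ yields $B^{-1}\ge A^{-1}$. Applying this to $tI+B\le tI+A$ for each $t>0$, we get that the map $X\mapsto X(t+X)^{-1}=I-t(t+X)^{-1}$ is order preserving.

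\textbf{Step 2: integral representation.} For a scalar $\lambda\ge0$ and $0<\alpha<1$,
$$\lambda^\alpha=\frac{\sin(\alpha\pi)}{\pi}\int_0^\infty t^{\alpha-1}\,\frac{\lambda}{t+\lambda}\,dt .$$
By the continuous functional calculus, the same formula holds for any positive operator $X$, with the integral converging in norm. Near $t=0$ the integrand is bounded by $t^{\alpha-1}$, and near $\infty$ it is bounded by $t^{\alpha-2}\|X\|$, uniformly in the spectrum. Each integrand $t^{\alpha-1}X(t+X)^{-1}$ is monotone in $X$ by Step 1, and positive weights integrate to a positive operator. Hence $B^\alpha\le A^\alpha$ for invertible $0<B\le A$.

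\textbf{Step 3: the limit $\varepsilon\to0$.} The map $X\mapsto X^\alpha$ is norm-continuous on positive operators, since $\lambda\mapsto\lambda^\alpha$ is uniformly approximable by polynomials on a bounded interval. Therefore $(B+\varepsilon I)^\alpha\to B^\alpha$ and $(A+\varepsilon I)^\alpha\to A^\alpha$ in norm. The positive cone is norm closed, so the inequality survives the limit. Positivity $0\le B^\alpha$ is immediate from the functional calculus.

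I expect the main obstacle to be Step 2: justifying the operator-valued integral and its convergence, and checking that the scalar identity transfers through the functional calculus. A cleaner alternative for this step is to apply the scalar identity to the spectral measure in $\langle X^\alpha h,h\rangle$ and use Fubini, reducing everything to quadratic forms $\langle X(t+X)^{-1}h,h\rangle$, which are monotone by Step 1.
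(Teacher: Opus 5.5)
Your proof is correct. The paper, however, gives no proof of this lemma: it simply quotes the L\"owner--Heinz inequality from Furuta's book \cite{Fur}. So your argument supplies a self-contained proof rather than paralleling one in the text.

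\textbf{What your route buys.} You use the integral-representation (Stieltjes) proof. It reduces everything to two facts: the order-reversal of $X\mapsto (t+X)^{-1}$, and the positivity of integrals with nonnegative weights. The same argument shows more than the lemma: every function $\int_0^\infty \frac{\lambda}{t+\lambda}\,d\mu(t)$ with $\mu\ge 0$ (and the integral convergent) is operator monotone on $[0,\infty)$.

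\textbf{The other standard proof.} It avoids operator-valued integrals entirely. For invertible $A$, one has $B\le A$ if and only if $\|B^{1/2}A^{-1/2}\|\le 1$. Using $r(XY)=r(YX)$ for the spectral radius, one checks that if the implication holds for exponents $\alpha$ and $\beta$, it holds for $\gamma=(\alpha+\beta)/2$:
\[
\|B^{\gamma/2}A^{-\gamma/2}\|^2=r(A^{-\gamma}B^{\gamma})
=r\bigl(A^{-\beta/2}B^{\beta/2}\,B^{\alpha/2}A^{-\alpha/2}\bigr)\le 1 .
\]
Starting from $0$ and $1$, this gives all dyadic exponents. Norm continuity of $X\mapsto X^\alpha$ and the usual $\varepsilon$-perturbation then finish. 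That route is more elementary but specific to power functions.

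\textbf{A small simplification.} In your version the $\varepsilon$-regularization and Step 3 are unnecessary. Step 1 is only ever applied to $t+B\le t+A$ with $t>0$, and these operators are invertible for any positive $A,B$. Your Step 2 representation is already valid for every positive $X$, including when $0\in\sigma(X)$.
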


\begin{lemma} \ (Smul'jan's Lemma \cite{Smu}) \label{lemma1} \ On $\mathcal{H}_1 \oplus \mathcal{H}_2$, consider the $2 \times 2$ operator matrix 
\begin{equation*}
M=\left(
\begin{array}{ll}
A & X \\
X^* & B
\end{array}
\right).
\end{equation*}
\ Then $M\geq 0$ if and only if\newline
(i) \ $A, B \geq 0$; and
\newline
(ii) \ $X=\sqrt{A}E \sqrt{B}$, for some contractive linear operator $E:\mathcal{H}_{2}\longrightarrow \mathcal{H}_{1}$.
\end{lemma}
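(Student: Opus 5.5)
Taking $\mathcal{H}_1$ and $\mathcal{H}_2$ as the two coordinate spaces, I will prove both directions of Smul'jan's Lemma through a factorization $M=G^*G$.

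For sufficiency, assume (i) and (ii). Then $M$ factors as
\[
M=\left(\begin{array}{cc} \sqrt{A} & 0 \\ 0 & \sqrt{B}\end{array}\right)\left(\begin{array}{cc} I & E \\ E^* & I\end{array}\right)\left(\begin{array}{cc} \sqrt{A} & 0 \\ 0 & \sqrt{B}\end{array}\right).
\]
The middle matrix is positive because $\|E\|\le 1$. Indeed,
\[
\langle h_1,h_1\rangle+2\re\langle Eh_2,h_1\rangle+\langle h_2,h_2\rangle \ge \|h_1\|^2-2\|h_1\|\|h_2\|+\|h_2\|^2\ge 0 .
\]
Since the outer factors are selfadjoint, conjugating this positive matrix by them keeps it positive, so $M\ge 0$.

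For necessity, assume $M\ge 0$. Compressing $M$ to $\mathcal{H}_1\oplus 0$ and to $0\oplus\mathcal{H}_2$ gives $A\ge 0$ and $B\ge 0$, which is (i). Write $M=N^2$ with $N=\sqrt{M}$, and split $N=(N_1\ N_2)$ columnwise, so that $N_i:\mathcal{H}_i\to\mathcal{H}_1\oplus\mathcal{H}_2$. Then $A=N_1^*N_1$, $B=N_2^*N_2$ and $X=N_1^*N_2$. From $\|N_1h\|^2=\langle Ah,h\rangle=\|\sqrt{A}h\|^2$ we obtain partial isometries $V_1$, $V_2$ with $N_1=V_1\sqrt{A}$ and $N_2=V_2\sqrt{B}$. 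Concretely, $V_1$ is defined on $\overline{\operatorname{ran}}\sqrt{A}$ by $V_1\sqrt{A}h=N_1h$ and set to zero on the orthogonal complement, and $V_2$ is built the same way. Setting $E:=V_1^*V_2$, we have $\|E\|\le 1$ and $X=\sqrt{A}\,V_1^*V_2\sqrt{B}=\sqrt{A}E\sqrt{B}$, which is (ii).

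The main obstacle is the non-invertible case. When $A$ or $B$ is not invertible one cannot simply write $E=A^{-1/2}XB^{-1/2}$, and the partial isometries above are exactly what replaces those inverses. The only check needed is that each $V_i$ is well defined and isometric on the closure of its initial space. This follows from the norm identity $\|N_ih\|=\|\sqrt{A}h\|$ (respectively $\|\sqrt{B}h\|$), which shows $V_i$ is isometric on $\operatorname{ran}\sqrt{A}$ (respectively $\operatorname{ran}\sqrt{B}$), so it extends continuously to the closure.
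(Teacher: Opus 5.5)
Your proof is correct and complete in both directions. For sufficiency, conjugating the positive matrix with entries $I$, $E$, $E^*$, $I$ by $\sqrt{A}\oplus\sqrt{B}$ recovers $M$ exactly. For necessity, you use the identities $A=N_1^*N_1$, $B=N_2^*N_2$, $X=N_1^*N_2$, which rely on $N^*=N$ so that $M=N^*N$. Together with the partial isometries $V_1,V_2$ these give the contraction $E=V_1^*V_2$. The norm identity $\|N_1h\|=\|\sqrt{A}h\|$ is exactly what makes $V_1$ well defined and isometric, and likewise for $V_2$.

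The paper itself gives no proof of this lemma. It quotes the result from Smul'jan \cite{Smu} (see also \cite[Proposition 2.1]{CF}), so your argument supplies what the paper takes for granted.

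For comparison, the other standard route avoids $\sqrt{M}$. Positivity of $M$ gives the Cauchy--Schwarz inequality $|\langle Xh_2,h_1\rangle|^2\le\langle Ah_1,h_1\rangle\langle Bh_2,h_2\rangle$. Hence $(\sqrt{B}h_2,\sqrt{A}h_1)\mapsto\langle Xh_2,h_1\rangle$ is a well-defined contractive sesquilinear form on $\overline{\operatorname{ran}}\sqrt{B}\times\overline{\operatorname{ran}}\sqrt{A}$, and the operator representing it, extended by $0$ on $\ker B$, is $E$.

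Your factorization argument is more structural, since it works verbatim for any Gram factorization $M=G^*G$; the form argument is more elementary. Both produce the same canonical contraction, namely the unique $E$ that vanishes on $\ker B$ and has range in $\overline{\operatorname{ran}}\sqrt{A}$.
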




\subsection{$2$--Variable Weighted Shifts}

We first recall the definition of a unilateral weighted shift. \ For $\alpha \equiv \{\alpha _{n}\}_{n=0}^{\infty }$ a bounded sequence of positive real numbers (called {\it weights}), let \linebreak $W_{\alpha}\equiv \mathrm{shift}(\alpha _{0},\alpha _{1},\ldots ):\ell ^{2}(\mathbb{Z}_{+})\rightarrow \ell ^{2}(\mathbb{Z}_{+})$ be the associated unilateral weighted shift, defined by $W_{\alpha }e_{n}:=\alpha _{n}e_{n+1}\;$(all $n\geq 0$), where $\{e_{n}\}_{n=0}^{\infty }$ is the canonical orthonormal
basis in $\ell ^{2}(\mathbb{Z}_{+})$. 

Consider now two double-indexed positive bounded sequences $\alpha _{%
\mathbf{k}},\beta _{\mathbf{k}}\in \ell ^{\infty }(\mathbb{Z}_{+}^{2})$ , $%
\mathbf{k}\equiv (k_{1},k_{2})\in \mathbb{Z}_{+}^{2}:=\mathbb{Z}_{+}\times 
\mathbb{Z}_{+}$ and let $\ell ^{2}(\mathbb{Z}_{+}^{2})$\ be the Hilbert
space of square-summable complex sequences indexed by $\mathbb{Z}_{+}^{2}$.
\ (Recall that $\ell ^{2}(\mathbb{Z}_{+}^{2})$ is canonically isometrically
isomorphic to $\ell ^{2}(\mathbb{Z}_{+})\bigotimes \ell ^{2}(\mathbb{Z}_{+})$%
.) \ We define the $2$--variable weighted shift $\mathbf{T}\equiv
(T_{1},T_{2})$\ by 
\begin{equation*}
T_{1}e_{\mathbf{k}}:=\alpha _{\mathbf{k}}e_{\mathbf{k+}\varepsilon _{1}}
\end{equation*}%
\begin{equation*}
T_{2}e_{\mathbf{k}}:=\beta _{\mathbf{k}}e_{\mathbf{k+}\varepsilon _{2}},
\end{equation*}%
where $\mathbf{\varepsilon }_{1}:=(1,0)$ and $\mathbf{\varepsilon }%
_{2}:=(0,1)$. \ Clearly, 
\begin{equation}
T_{1}T_{2}=T_{2}T_{1}\Longleftrightarrow \beta _{\mathbf{k+}\varepsilon
_{1}}\alpha _{\mathbf{k}}=\alpha _{\mathbf{k+}\varepsilon _{2}}\beta _{%
\mathbf{k}}\;\;(\text{all }\mathbf{k}).  \label{commuting}
\end{equation}
Associated to a $2$--variable weighted shift is a $2$--dimensional weight diagram (see Figure \ref{Figure 1}(i)). \ 


\setlength{\unitlength}{1mm} \psset{unit=1mm}
\begin{figure}[th]
\begin{center}
\begin{picture}(135,65)

\rput(-15,25){
\psline{->}(20,20)(69,20)
\psline(20,40)(66,40)
\psline(20,60)(66,60)
\psline{->}(20,20)(20,70)
\psline(40,20)(40,67)
\psline(60,20)(60,67)

\put(12,16){\footnotesize{$(0,0)$}}
\put(37,16){\footnotesize{$(1,0)$}}
\put(57,16){\footnotesize{$(2,0)$}}

\put(28,21){\footnotesize{$\alpha_{00}$}}
\put(48,21){\footnotesize{$\alpha_{10}$}}
\put(61,21){\footnotesize{$\alpha_{20}$}}

\put(28,41){\footnotesize{$\alpha_{01}$}}
\put(48,41){\footnotesize{$\alpha_{11}$}}
\put(61,41){\footnotesize{$\alpha_{21}$}}

\put(28,61){\footnotesize{$\alpha_{02}$}}
\put(48,61){\footnotesize{$\alpha_{12}$}}
\put(61,61){\footnotesize{$\alpha_{22}$}}

\psline{->}(35,14)(50,14)
\put(42,10){$\rm{T}_1$}
\psline{->}(10,35)(10,50)
\put(4,42){$\rm{T}_2$}

\put(11,40){\footnotesize{$(0,1)$}}
\put(11,60){\footnotesize{$(0,2)$}}

\put(20,30){\footnotesize{$\beta_{00}$}}
\put(20,50){\footnotesize{$\beta_{01}$}}
\put(20,65){\footnotesize{$\beta_{02}$}}

\put(40,30){\footnotesize{$\beta_{10}$}}
\put(40,50){\footnotesize{$\beta_{11}$}}
\put(40,65){\footnotesize{$\beta_{12}$}}

\put(60,30){\footnotesize{$\beta_{20}$}}
\put(60,50){\footnotesize{$\beta_{21}$}}

\put(20,10){(i)}
}

\rput(5,29){

\psline{->}(75,20)(130,20)
\psline(75,35)(127,35)
\psline(75,50)(127,50)
\psline(75,65)(127,65)

\psline{->}(75,20)(75,72)
\psline(90,20)(90,71)
\psline(105,20)(105,71)
\psline(120,20)(120,71)

\put(82,21){\footnotesize{$\cbl{x}$}}
\put(97,21){\footnotesize{$1$}}
\put(112,21){\footnotesize{$1$}}

\put(82,36){\footnotesize{$\cbl{a}$}}
\put(97,36){\footnotesize{$1$}}
\put(112,36){\footnotesize{$1$}}

\put(82,51){\footnotesize{$a$}}
\put(97,51){\footnotesize{$1$}}
\put(112,51){\footnotesize{$1$}}


\put(76,26){\footnotesize{$\cbl{y}$}}
\put(76,41){\footnotesize{$1$}}
\put(76,56){\footnotesize{$1$}}

\put(91,26){\footnotesize{$\dfrac{ay}{x}$}}
\put(91,41){\footnotesize{$1$}}
\put(91,56){\footnotesize{$1$}}

\put(106,26){\footnotesize{$\dfrac{ay}{x}$}}
\put(106,41){\footnotesize{$1$}}
\put(106,56){\footnotesize{$1$}}

\put(91.5,68){$\vdots$}
\put(76.5,68){$\vdots$}
\put(106.5,68){$\vdots$}

\put(123,21.1){$\cdots$}
\put(123,36.1){$\cdots$}
\put(123,51.1){$\cdots$}
\put(123,66.1){$\cdots$}

\psline{->}(85,12)(100,12)
\put(92,8){$\rm{T}_1$}
\psline{->}(64,35)(64,50)
\put(58,42){$\rm{T}_2$}

\put(67,16){\footnotesize{$(0,0)$}}
\put(86,16){\footnotesize{$(1,0)$}}
\put(101,16){\footnotesize{$(2,0)$}}
\put(118.5,16){\footnotesize{$\cdots$}}

\put(67,34){\footnotesize{$(0,1)$}}
\put(67,50){\footnotesize{$(0,2)$}}
\put(67,65){\footnotesize{$(0,3)$}}


\put(75,12){(ii)}

}
\end{picture}
\end{center}
\caption{(i) Weight diagram of a generic $2$--variable weighted shift $W_{(\alpha,\beta)}~\equiv~(T_{1},T_{2})$; (ii) weight diagram of 
$W_{(\alpha,\beta)}(a,x,y)$,~defined on page \pageref{defaxy}.}

\label{Figure 1}
\end{figure}


Trivially, a pair of unilateral weighted shifts $W_{\alpha}$ and $W_{\beta }$
gives rise to a $2$--variable weighted shift $\mathbf{T}\equiv (T_{1},T_{2})$, if we let $\alpha _{(k_{1},k_{2})}:=\alpha _{k_{1}}$ and $\beta
_{(k_{1},k_{2})}:=\beta _{k_{2}}\;$(all $k_{1},k_{2}\in \mathbb{Z}_{+}^{2}$%
). \ In this case, $\mathbf{T}$ is subnormal (resp. hyponormal) if and only
if so are $T_{1}$ and $T_{2}$; in fact, under the canonical identification
of $\ell ^{2}(\mathbb{Z}_{+}^{2})$ and $\ell ^{2}(\mathbb{Z}_{+})\bigotimes
\ell ^{2}(\mathbb{Z}_{+})$, $T_{1}\cong I \bigotimes W_{a}$ and $T_{2}\cong
W_{\beta } \bigotimes I$, and $\mathbf{T}$ is also doubly commuting. \ For
this reason, we do not focus attention on shifts of this type, and use them
only when the above-mentioned triviality is desirable or needed. \ The special case when $W_\alpha = W_\beta = U_+$ (the un-weighted unilateral shift), the $2$--variable weighted shift $(I \otimes U_+,U_+ \otimes I)$ is known as the Helton-Howe shift ($\operatorname{HHS}$).

We also recall the notion of moment of order $\mathbf{k} \equiv (k_1,k_2)$ for a pair $%
(\alpha ,\beta )$ satisfying (\ref{commuting}). \ Given $\mathbf{k}\in 
\mathbb{Z}_{+}^{2}$, the moment of $(\alpha ,\beta )$ of order $\mathbf{k}$
is 
\begin{equation} \label{moment1}
\gamma _{\mathbf{k}}\equiv \gamma _{\mathbf{k}}(\alpha ,\beta ):=\left\{ 
\begin{array}{lr}
1 & \text{if }\mathbf{k}=\mathbf{0} \\ 
\alpha _{(0,0)}^{2}\cdot ...\cdot \alpha _{(k_{1}-1,0)}^{2} & \text{if } k_{1}\geq 1\text{ and }k_{2}=0 \\ 
\beta _{(0,0)}^{2}\cdot ...\cdot \beta _{(0,k_{2}-1)}^{2} & \text{if }k_{1}=0 \text{ and }k_{2}\geq 1 \\ 
\alpha _{(0,0)}^{2}\cdot ...\cdot \alpha _{(k_{1}-1,0)}^{2}\cdot \beta_{(k_{1},0)}^{2}\cdot ...\cdot \beta _{(k_{1},k_{2}-1)}^{2} & \text{if } k_{1}\geq 1\text{ and }k_{2}\geq 1.
\end{array}%
\right.
\end{equation}
We remark that, due to (\ref{commuting}), $\gamma _{\mathbf{k}}$ can be computed using any non-decreasing path from $%
(0,0)$ to $(k_{1},k_{2})$.

The natural analog of the Berger-Gellar-Wallen Theorem for unilateral weighted shifts \linebreak (cf. \cite[III.8.16]{Con}) is true, as we now state.

\begin{theorem}
\label{Berger}(The Berger-Gellar-Wallen Theorem in two variables \cite{JeLu}) \ A $2$--variable weighted shift $\mathbf{T}\equiv (T_{1},T_{2})$ admits a commuting normal extension if and only if there is a probability measure $\mu$ defined on the $2$-dimensional rectangle $R=[0,a_{1}]\times \lbrack 0,a_{2}]$
($a_{i}:=\left\| T_{i}\right\| ^{2}$) such that $\gamma _{\mathbf{k}%
}=\iint_{R}\mathbf{t}^{\mathbf{k}}d\mu (\mathbf{t}):=%
\iint_{R}t_{1}^{k_{1}}t_{2}^{k_{2}}\;d\mu (t_{1},t_{2})$ \ $($all $\mathbf{k\in }\;\mathbb{Z}_{+}^{2}$).
\end{theorem}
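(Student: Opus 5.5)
The plan is to reduce the two-variable statement to a two-variable moment problem and use the standard lifting machinery, following the one-variable Berger--Gellar--Wallen argument.

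\emph{Necessity.} Suppose $\mathbf{N}\equiv(N_1,N_2)$ is a commuting normal extension of $\mathbf{T}$ on $\mathcal{K}\supseteq\ell^2(\mathbb{Z}_+^2)$. Let $E$ be the joint spectral measure of $\mathbf{N}$, and set $\mu(\cdot):=\langle E(\cdot)e_{\mathbf{0}},e_{\mathbf{0}}\rangle$, a probability measure on $\mathbb{C}^2$. Since $\ell^2(\mathbb{Z}_+^2)$ is invariant under each $N_i$, we have $T^{\mathbf{k}}e_{\mathbf{0}}=N^{\mathbf{k}}e_{\mathbf{0}}$. By the definition of the weights and \eqref{commuting},
\[
\|T^{\mathbf{k}}e_{\mathbf{0}}\|^2=\gamma_{\mathbf{k}}.
\]
On the other hand,
\[
\|N^{\mathbf{k}}e_{\mathbf{0}}\|^2=\int |z_1|^{2k_1}|z_2|^{2k_2}\,d\mu .
\]
Push $\mu$ forward under $(z_1,z_2)\mapsto(|z_1|^2,|z_2|^2)$. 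The spectrum of $N_i$ lies in the disk of radius $\|N_i\|$, which can be taken equal to $\|T_i\|$: restrict $\mathbf{N}$ to the minimal normal extension, whose norm equals that of $T_i$ (subnormal operators have minimal normal extensions of the same norm). The pushed-forward measure then lives on $R=[0,a_1]\times[0,a_2]$ and represents the moments $\gamma_{\mathbf{k}}$.

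\emph{Sufficiency.} Given $\mu$ on $R$ with $\gamma_{\mathbf{k}}=\int\mathbf{t}^{\mathbf{k}}d\mu$, work in $L^2(\nu)$, where $\nu$ is the image of $\mu\times(\text{normalized arc length})^2$ under $(t_1,t_2,\theta_1,\theta_2)\mapsto(\sqrt{t_1}e^{i\theta_1},\sqrt{t_2}e^{i\theta_2})$. Equivalently, take $\nu$ on $\mathbb{C}^2$ rotation-invariant in each variable. The monomials $z^{\mathbf{k}}$ are then mutually orthogonal, with $\|z^{\mathbf{k}}\|^2=\gamma_{\mathbf{k}}$.

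Define $V e_{\mathbf{k}}:=z^{\mathbf{k}}/\sqrt{\gamma_{\mathbf{k}}}$. This is an isometry onto the closure $P^2(\nu)$ of the polynomials, provided all $\gamma_{\mathbf{k}}>0$, which holds since the weights are positive. Multiplication by $z_i$ satisfies
\[
M_{z_1}z^{\mathbf{k}}=z^{\mathbf{k}+\varepsilon_1},\qquad
\sqrt{\gamma_{\mathbf{k}+\varepsilon_1}/\gamma_{\mathbf{k}}}=\alpha_{\mathbf{k}},
\]
by the path-independence of $\gamma$ noted after \eqref{moment1}; the same holds for $z_2$ and $\beta$. Hence $V T_i=M_{z_i}V$. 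The pair $(M_{z_1},M_{z_2})$ on $L^2(\nu)$ is a commuting normal pair extending the unitarily equivalent copy of $\mathbf{T}$, since $\nu$ has compact support.

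\emph{Main obstacle.} The delicate point is the support claim in the necessity direction: that $\mu$ sits in the rectangle with sides $\|T_i\|^2$ rather than some larger set. I would settle this directly from the moments. Because $\gamma_{\mathbf{k}+\varepsilon_1}\le\|T_1\|^2\gamma_{\mathbf{k}}$, we get
\[
\int t_1^{k_1+n}\,d\mu\le a_1^{n}\,\gamma_{(k_1,0)}
\]
for all $n$. Letting $n\to\infty$ and taking $n$-th roots forces $\mu$ to vanish on $\{t_1>a_1\}$, and the same argument handles $t_2$.
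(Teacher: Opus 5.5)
The paper does not prove this statement; it quotes it from Jewell--Lubin \cite{JeLu}, so there is no internal argument to compare against. Your proof is correct and is the standard one. It transcribes the classical one-variable Berger--Gellar--Wallen argument (cf.\ \cite[III.8.16]{Con}) to two variables:
\begin{itemize}
\item necessity comes from the scalar spectral measure $\langle E(\cdot)e_{\mathbf{0}},e_{\mathbf{0}}\rangle$, pushed forward by $(z_1,z_2)\mapsto(|z_1|^2,|z_2|^2)$;
\item sufficiency comes from the circularly symmetric lift $\nu$ of $\mu$. On $\nu$ the monomials are orthogonal with $\|z^{\mathbf{k}}\|^2=\gamma_{\mathbf{k}}$, so $\mathbf{T}$ is unitarily equivalent to $(M_{z_1},M_{z_2})$ restricted to $P^2(\nu)$.
\end{itemize}
Compared with the bare citation, your version is self-contained and exhibits an explicit normal extension.

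Two small remarks.

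First, your moment argument for the support is the right way to settle it. From $\gamma_{(n,0)}\le a_1^n$ you get $(a_1+\epsilon)^n\,\mu(\{t_1>a_1+\epsilon\})\le a_1^n$ for all $n$, which forces $\mu(\{t_1>a_1+\epsilon\})=0$. This makes the detour through the minimal normal extension unnecessary. That detour is also not fully justified as written. ``Minimal normal extensions have the same norm'' is a single-operator fact, and the minimal normal extension of the pair need not be the minimal normal extension of $T_1$ alone. The equality $\|N_i\|=\|T_i\|$ does still hold there, but it needs its own short argument. One option is to apply Fuglede's theorem on the dense set of vectors $N^{*\mathbf{m}}h$ to obtain $\|N_1^nN^{*\mathbf{m}}h\|\le\|N^{*\mathbf{m}}\|\,\|T_1\|^n\|h\|$.

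Second, the identity $\gamma_{\mathbf{k}+\varepsilon_1}=\alpha_{\mathbf{k}}^2\gamma_{\mathbf{k}}$ in your sufficiency step is exactly where the commutativity of $\mathbf{T}$ enters. It is a standing hypothesis, since moments are only defined for pairs satisfying \eqref{commuting}. It is genuinely needed: the path-specific formula \eqref{moment1} never involves $\alpha_{\mathbf{k}}$ with $k_2\ge 1$. So without commutativity, the ``if'' direction would fail.
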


To state the next result, we first need to recall the definition of the Helton-Howe shift (HHS), acting on $\ell^2(\mathbb{Z}_+^2)$; using tensor notation, this $2$--variable weighted shift is defined as $\operatorname{HHS} := (I \otimes U_+,U_+ \otimes I)$, that is, all entries in the weight diagram of $\operatorname{HHS}$ are equal to $1$. 

\begin{lemma} \label{quasi}
Let $W_{(\alpha ,\beta)}\equiv
(T_{1},T_{2})$ be a commuting $2$--variable weighted shift. \ The following statements are equivalent:%
\newline
(i) \ $W_{\left( \alpha ,\beta \right) }$ is matricially quasinormal, i.e., each $T_{i}$ commutes with each $T_{j}^{\ast }T_{k}$ \quad ($i,j,k=1,2$);
(ii) $\ W_{\left( \alpha ,\beta \right) }$ is jointly quasinormal, i.e., each $T_{i}$ commutes with each $T_{j}^{\ast
}T_{j}$ \quad ($i,j=1,2$);\newline
(iii) \ Up to a constant multiple, $W_{\left( \alpha ,\beta \right) }$ is the
Helton-Howe shift.
\end{lemma}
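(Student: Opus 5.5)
The plan is to prove the cycle (i) $\Rightarrow$ (ii) $\Rightarrow$ (iii) $\Rightarrow$ (i), working throughout with the action of $T_1,T_2,T_1^*,T_2^*$ on the canonical basis $\{e_{\mathbf{k}}\}$ of $\ell^2(\mathbb{Z}_+^2)$. The implication (i) $\Rightarrow$ (ii) is immediate, since condition (ii) is the special case $j=k$ of condition (i).

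For (ii) $\Rightarrow$ (iii), I would first record that $T_1^*T_1 e_{\mathbf{k}}=\alpha_{\mathbf{k}}^2 e_{\mathbf{k}}$ and $T_2^*T_2 e_{\mathbf{k}}=\beta_{\mathbf{k}}^2 e_{\mathbf{k}}$. Hence
$$T_i T_j^*T_j e_{\mathbf{k}} = T_j^*T_j T_i e_{\mathbf{k}} \quad \text{for all } \mathbf{k}$$
is equivalent to a family of weight identities. For $T_1$ these read $\alpha_{\mathbf{k}}^2=\alpha_{\mathbf{k}+\varepsilon_1}^2$ and $\beta_{\mathbf{k}}^2=\beta_{\mathbf{k}+\varepsilon_1}^2$. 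For $T_2$ they read $\alpha_{\mathbf{k}}^2=\alpha_{\mathbf{k}+\varepsilon_2}^2$ and $\beta_{\mathbf{k}}^2=\beta_{\mathbf{k}+\varepsilon_2}^2$. (I use that all weights are positive, so the basis vectors $T_ie_{\mathbf{k}}$ are nonzero multiples of $e_{\mathbf{k}+\varepsilon_i}$.) Propagating these equalities along lattice paths from $(0,0)$ shows that $\alpha_{\mathbf{k}}\equiv\alpha_{(0,0)}=:c$ and $\beta_{\mathbf{k}}\equiv\beta_{(0,0)}=:d$ are both constant. The commutativity condition (\ref{commuting}) is then automatic.

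The remaining point in this direction is to show $c=d$, so that $\mathbf{T}$ is a constant multiple of $\operatorname{HHS}$. This should come from the mixed conditions: testing $T_1$ against $T_2^*T_2$ and $T_2$ against $T_1^*T_1$ on basis vectors at the boundary rows and columns, where $T_1^*$ and $T_2^*$ annihilate $e_{(0,k_2)}$ and $e_{(k_1,0)}$. If these alone do not force $c=d$, I would use the full hypothesis in the form stated in (i) to couple the two weights.

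For (iii) $\Rightarrow$ (i), write $T_1=c\,S_1$ and $T_2=c\,S_2$ with $S_1=I\otimes U_+$ and $S_2=U_+\otimes I$. These are doubly commuting isometries: $S_i^*S_i=I$, and $S_1$ commutes with $S_2^*$. Then $T_j^*T_j=c^2 I$, which commutes with everything, so all the cases $j=k$ hold trivially. For $j\neq k$ I would check $T_iT_j^*T_k=T_j^*T_kT_i$ directly on $e_{\mathbf{k}}$, distinguishing the cases $k_1=0$ or $k_2=0$ (where an adjoint may annihilate) from the interior case $k_1,k_2\ge1$. The interior case is immediate from double commutativity.

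I expect the main obstacle to be exactly these boundary checks for the mixed products $T_2^*T_1$ and $T_1^*T_2$ against $T_2$ and $T_1$ respectively. The kernel projections $I-S_iS_i^*$ appear there, and the bookkeeping on the edges of the lattice must be done carefully; this same edge analysis is also what I would use to settle $c=d$ in (ii) $\Rightarrow$ (iii).
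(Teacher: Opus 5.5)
Your cycle (i)$\Rightarrow$(ii)$\Rightarrow$(iii)$\Rightarrow$(i) is the one the paper uses. However, the two places you deferred as edge bookkeeping are genuine failures, and no boundary analysis can rescue them: condition (i) is never satisfied by a $2$--variable weighted shift with positive weights. Test $T_2$ against $T_2^*T_1$ on the bottom row. There $T_2T_2^*T_1e_{(k_1,0)}=\alpha_{(k_1,0)}\,T_2T_2^*e_{(k_1+1,0)}=0$, whereas $T_2^*T_1T_2e_{(k_1,0)}=\beta_{(k_1,0)}\alpha_{(k_1,1)}\beta_{(k_1+1,0)}\,e_{(k_1+1,0)}\neq 0$. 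More structurally, $T_2^*T_1T_2=T_2^*T_2T_1$ by commutativity, so $T_2$ commutes with $T_2^*T_1$ if and only if $[T_2^*,T_2]\,T_1=0$. This fails as soon as $T_2$ is not normal on the bottom row, which $T_1$ maps into itself. For $c\cdot\operatorname{HHS}$ the obstruction is explicit: $T_2T_2^*T_1-T_2^*T_1T_2=c^3\,(U_+U_+^*-I)\otimes U_+\neq 0$. So (iii)$\Rightarrow$(i) is false, and (i) is not equivalent to (ii) or (iii). The paper's own proof asserts this implication via a ``straightforward calculation with tensors,'' and that calculation breaks in exactly this way, so the defect is in the statement itself.

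The first half of your (ii)$\Rightarrow$(iii) argument is correct, and more self-contained than the paper's appeal to an external result. The four commutation relations give $\alpha_{\mathbf{k}+\varepsilon_i}=\alpha_{\mathbf{k}}$ and $\beta_{\mathbf{k}+\varepsilon_i}=\beta_{\mathbf{k}}$, hence $\alpha\equiv c$ and $\beta\equiv d$. But your next step, showing $c=d$, cannot be carried out. $T_1^*T_1$ and $T_2^*T_2$ are diagonal with nonzero entries, so there are no boundary effects to exploit. Indeed $(c\,I\otimes U_+,\,d\,U_+\otimes I)$ satisfies (ii) for all $c,d>0$, because each $T_j^*T_j$ is a scalar. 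Falling back on (i) to couple $c$ and $d$ would be circular inside a proof of (ii)$\Rightarrow$(iii), and by the previous paragraph (i) is vacuous here anyway. What your computation actually proves is that (ii) holds if and only if $W_{(\alpha,\beta)}=(c\,I\otimes U_+,\,d\,U_+\otimes I)$ for some $c,d>0$. So (iii) must be read with a separate constant in each coordinate, and (i) must be removed from the list of equivalent conditions; it implies (ii) only vacuously.
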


\begin{proof} \ (i) $\Rightarrow (ii)$ is clear from the definitions, and (ii) $\Rightarrow$ (iii) is the content of \cite[Theorem 3.3]{CLY3}. \ Therefore, and without loss of generality, we only need to prove that the Helton-Howe shift is matricially quasinormal. \ Since $\operatorname{HHS} =(I \otimes U_+,U_+ \otimes I)$, the result follows from a straightforward calculation with tensors. \ 
\end{proof} 


Next, we state three results about (joint) hyponormality and $k$--hyponormality of $2$--variable weighted shifts that will be needed to prove our main results.

\begin{lemma}
\label{joint hypo} (Six-point Test \cite{bridge}) \ A $2$--variable
weighted shift $W_{\left( \alpha ,\beta \right) }$ is jointly hyponormal if and only if%
\begin{equation*}
\left(
\begin{array}{cc}
\alpha_{\mathbf{k+\varepsilon_1}}^2 - \alpha_{\mathbf{k}}^{2} & \alpha
_{\mathbf{k+\varepsilon_2}}\beta_{\mathbf{k+\varepsilon_1}}-\alpha_{\mathbf{k}}\beta
_{\mathbf{k}} \\
\alpha
_{\mathbf{k+\varepsilon_2}}\beta _{\mathbf{k+\varepsilon_1}}-\alpha _{\mathbf{k}}\beta
_{\mathbf{k}} & \beta_{\mathbf{k+\varepsilon_2}}^2 - \beta_{\mathbf{k}}^{2}
\end{array}%
\right) \geq 0
\end{equation*}
for all $\mathbf{k}\in \mathbf{Z}_{+}^{2}$.
\end{lemma}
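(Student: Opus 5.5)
We will prove the Six-point Test by reducing the positivity of the self-commutator $[\mathbf{T}^*,\mathbf{T}]$ (equivalently $L-R$ from the Introduction) to positivity on a family of mutually orthogonal finite-dimensional pieces. The matrix is
$$
[\mathbf{T}^*,\mathbf{T}]=\left(\begin{array}{cc}[T_1^*,T_1] & [T_2^*,T_1]\\ [T_1^*,T_2] & [T_2^*,T_2]\end{array}\right),
$$
acting on $\ell^2(\mathbb{Z}_+^2)\oplus\ell^2(\mathbb{Z}_+^2)$. The first step is to compute each entry on the canonical basis.

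\textbf{Step 1: the diagonal entries.} Since $T_1^*e_{\mathbf{k}}=\alpha_{\mathbf{k}-\varepsilon_1}e_{\mathbf{k}-\varepsilon_1}$ (interpreted as $0$ when $k_1=0$), each diagonal entry is diagonal in $\{e_{\mathbf{k}}\}$. We get
$$
[T_1^*,T_1]e_{\mathbf{k}}=(\alpha_{\mathbf{k}}^2-\alpha_{\mathbf{k}-\varepsilon_1}^2)e_{\mathbf{k}},\qquad [T_2^*,T_2]e_{\mathbf{k}}=(\beta_{\mathbf{k}}^2-\beta_{\mathbf{k}-\varepsilon_2}^2)e_{\mathbf{k}}.
$$

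\textbf{Step 2: the off-diagonal entries.} For the entry $[T_2^*,T_1]=T_2^*T_1-T_1T_2^*$, one computes
$$
T_2^*T_1e_{\mathbf{k}}=\alpha_{\mathbf{k}}\beta_{\mathbf{k}+\varepsilon_1-\varepsilon_2}\,e_{\mathbf{k}+\varepsilon_1-\varepsilon_2},\qquad T_1T_2^*e_{\mathbf{k}}=\beta_{\mathbf{k}-\varepsilon_2}\alpha_{\mathbf{k}-\varepsilon_2}\,e_{\mathbf{k}+\varepsilon_1-\varepsilon_2}.
$$
So $[T_2^*,T_1]$ maps $e_{\mathbf{k}}$ to a multiple of $e_{\mathbf{k}+\varepsilon_1-\varepsilon_2}$. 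Reindexing by $\mathbf{j}=\mathbf{k}-\varepsilon_2$ (and using the commuting relation (\ref{commuting}) where convenient), the coefficient linking $e_{\mathbf{j}+\varepsilon_2}$ to $e_{\mathbf{j}+\varepsilon_1}$ is
$$
\alpha_{\mathbf{j}+\varepsilon_2}\beta_{\mathbf{j}+\varepsilon_1}-\alpha_{\mathbf{j}}\beta_{\mathbf{j}}.
$$

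\textbf{Step 3: decomposition into $2\times2$ blocks.} For each $\mathbf{j}\in\mathbb{Z}_+^2$, consider the two-dimensional space spanned by $e_{\mathbf{j}+\varepsilon_1}\oplus 0$ and $0\oplus e_{\mathbf{j}+\varepsilon_2}$. These spaces are mutually orthogonal for distinct $\mathbf{j}$. Together with the vectors $e_{(0,k_2)}\oplus 0$ and $0\oplus e_{(k_1,0)}$ (which are not covered), they exhaust the basis of $\ell^2\oplus\ell^2$. By Steps 1 and 2 each such space reduces the self-commutator. On the block for $\mathbf{j}$ the compression is exactly the $2\times 2$ matrix of the lemma with $\mathbf{k}=\mathbf{j}$: the diagonal entries are $\alpha_{\mathbf{j}+\varepsilon_1}^2-\alpha_{\mathbf{j}}^2$ and $\beta_{\mathbf{j}+\varepsilon_2}^2-\beta_{\mathbf{j}}^2$, and the off-diagonal entry is the coefficient computed in Step 2. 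On the leftover vectors the operator acts by scalars: $\alpha_{(0,k_2)}^2\ge0$ on $e_{(0,k_2)}\oplus0$ and $\beta_{(k_1,0)}^2\ge0$ on $0\oplus e_{(k_1,0)}$, since the subtracted weight is zero there. Hence $[\mathbf{T}^*,\mathbf{T}]$ is unitarily a direct sum of these $2\times2$ blocks and nonnegative scalars.

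\textbf{Step 4: conclusion.} A bounded block-diagonal operator is positive if and only if every block is positive, which yields the equivalence claimed in the lemma.

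The main obstacle is the index bookkeeping in Step 2: checking that the off-diagonal coefficient really lands in the $\mathbf{j}$-block with the stated form $\alpha_{\mathbf{k}+\varepsilon_2}\beta_{\mathbf{k}+\varepsilon_1}-\alpha_{\mathbf{k}}\beta_{\mathbf{k}}$, and that the boundary terms (where $k_1=0$ or $k_2=0$) vanish correctly. We would verify this by writing out $\langle [T_2^*,T_1]e_{\mathbf{j}+\varepsilon_2},e_{\mathbf{j}+\varepsilon_1}\rangle$ directly.
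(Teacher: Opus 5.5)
Your argument is correct and complete. The paper gives no proof of this lemma (it quotes it from \cite{bridge}), but your pairing of $e_{\mathbf{k}+\varepsilon_1}\oplus 0$ with $0\oplus e_{\mathbf{k}+\varepsilon_2}$ is exactly the block decomposition of Theorem \ref{Thm6}. Take $\mathbf{k}=(i-1,n-i)$ and subtract the $i$-th $2\times 2$ block of $R|_{\mathcal{K}(n)}$ from that of $L|_{\mathcal{K}(n)}$: this gives precisely the six-point matrix, and the end blocks $(\alpha_{(0,n)}^2)$ and $(\beta_{(n,0)}^2)$ are your leftover nonnegative scalars.

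The boundary bookkeeping you flagged checks out. Also, the commuting relation (\ref{commuting}) is never actually used: the off-diagonal coefficient $\alpha_{\mathbf{k}+\varepsilon_2}\beta_{\mathbf{k}+\varepsilon_1}-\alpha_{\mathbf{k}}\beta_{\mathbf{k}}$ comes out of $T_2^*T_1-T_1T_2^*$ directly.
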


\begin{lemma}
\label{khypo} (Joint hyponormality of $2$--variable weighted shifts \cite{CLY1}) \ Let $W_{(\alpha ,\beta )}$ be a 2--variable
weighted shift. \ The following statements are equivalent:\newline
(i) \ $W_{(\alpha ,\beta )}$ is hyponormal;\newline
(ii) \ $M_{\mathbf{u}}( W_{(\alpha ,\beta )}) :=\left(
\begin{array}{ccc}
\gamma_{\mathbf{u}} & \gamma _{\mathbf{u+\varepsilon_1}} & \gamma
_{\mathbf{u+\varepsilon_2}} \\
\gamma _{\mathbf{u+\varepsilon_1}} & \gamma _{\mathbf{u}+2\varepsilon_1} & \gamma
_{\mathbf{u+\varepsilon_1+\varepsilon_2}} \\
\gamma _{\mathbf{u+\varepsilon_2}} & \gamma _{\mathbf{u+\varepsilon_1+\varepsilon_2}} & \gamma
_{\mathbf{u}+2\varepsilon_2}
\end{array}
\right) \geq 0 \quad \quad (\textrm{all } \mathbf{u} \in \mathbb{Z}_{+}^2$).
\end{lemma}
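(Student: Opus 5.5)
We show that (i) and (ii) are equivalent by reducing both to the positivity of the self-commutator $L-R$ computed in the moment basis. Recall that $W_{(\alpha,\beta)}$ is hyponormal exactly when
$$
\langle (L-R)(f_1\oplus f_2), f_1\oplus f_2\rangle \ge 0 \quad \textrm{for all } f_1,f_2 \in \ell^2(\mathbb{Z}_+^2),
$$
that is, when $\|T_1^*f_1+T_2^*f_2\|^2 \le \|T_1f_1+T_2f_2\|^2$ up to the cross terms. All operators involved shift degrees in a controlled way, so $L-R$ decomposes along the subspaces spanned by $\{e_{\mathbf{k}}\oplus 0,\ 0\oplus e_{\mathbf{k}+\varepsilon_2-\varepsilon_1}\}$. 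Concretely, $T_j^*T_i$ and $T_iT_j^*$ map $e_{\mathbf{k}}$ to a multiple of $e_{\mathbf{k}+\varepsilon_i-\varepsilon_j}$. Positivity of $L-R$ is therefore equivalent to positivity of its compressions to finite blocks. This is essentially how Lemma \ref{joint hypo} (the Six-point Test) arises, and we take that test as our starting point.

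The plan is to show that, for each $\mathbf{u}$, the $2\times 2$ matrix of the Six-point Test at $\mathbf{k}=\mathbf{u}$ is congruent to a Schur complement of $M_{\mathbf{u}}$. We rescale to the basis vectors $e_{\mathbf{u}}/\sqrt{\gamma_{\mathbf{u}}}$, using $\gamma_{\mathbf{u}+\varepsilon_1}=\alpha_{\mathbf{u}}^2\gamma_{\mathbf{u}}$, $\gamma_{\mathbf{u}+2\varepsilon_1}=\alpha_{\mathbf{u}+\varepsilon_1}^2\alpha_{\mathbf{u}}^2\gamma_{\mathbf{u}}$, and $\gamma_{\mathbf{u}+\varepsilon_1+\varepsilon_2}=\alpha_{\mathbf{u}}^2\beta_{\mathbf{u}+\varepsilon_1}^2\gamma_{\mathbf{u}}$. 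Since $\gamma_{\mathbf{u}}>0$, Choleski/Schur (the Moslehian--Kian--Xu criterion recalled in the Introduction) gives
$$
M_{\mathbf{u}}\ge 0 \iff \begin{pmatrix}\gamma_{\mathbf{u}+2\varepsilon_1}-\gamma_{\mathbf{u}+\varepsilon_1}^2/\gamma_{\mathbf{u}} & \gamma_{\mathbf{u}+\varepsilon_1+\varepsilon_2}-\gamma_{\mathbf{u}+\varepsilon_1}\gamma_{\mathbf{u}+\varepsilon_2}/\gamma_{\mathbf{u}}\\ \ast & \gamma_{\mathbf{u}+2\varepsilon_2}-\gamma_{\mathbf{u}+\varepsilon_2}^2/\gamma_{\mathbf{u}}\end{pmatrix}\ge 0 .
$$
Substituting the weight products, the $(1,1)$ entry becomes $\gamma_{\mathbf{u}}\alpha_{\mathbf{u}}^2(\alpha_{\mathbf{u}+\varepsilon_1}^2-\alpha_{\mathbf{u}}^2)$. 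The $(2,2)$ entry becomes $\gamma_{\mathbf{u}}\beta_{\mathbf{u}}^2(\beta_{\mathbf{u}+\varepsilon_2}^2-\beta_{\mathbf{u}}^2)$. Using (\ref{commuting}) in the form $\alpha_{\mathbf{u}}\beta_{\mathbf{u}+\varepsilon_1}=\alpha_{\mathbf{u}+\varepsilon_2}\beta_{\mathbf{u}}$, the off-diagonal entry becomes $\gamma_{\mathbf{u}}\alpha_{\mathbf{u}}\beta_{\mathbf{u}}(\alpha_{\mathbf{u}+\varepsilon_2}\beta_{\mathbf{u}+\varepsilon_1}-\alpha_{\mathbf{u}}\beta_{\mathbf{u}})$.

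The resulting matrix is therefore $\gamma_{\mathbf{u}}D\,H_{\mathbf{u}}\,D$, where $D=\operatorname{diag}(\alpha_{\mathbf{u}},\beta_{\mathbf{u}})$ is invertible (the weights are positive) and $H_{\mathbf{u}}$ is exactly the Six-point matrix of Lemma \ref{joint hypo} at $\mathbf{k}=\mathbf{u}$. Congruence preserves positivity, so $M_{\mathbf{u}}\ge 0$ if and only if $H_{\mathbf{u}}\ge 0$. Taking all $\mathbf{u}\in\mathbb{Z}_+^2$ and invoking Lemma \ref{joint hypo} yields (i)$\iff$(ii).

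The main obstacle is the bookkeeping in the off-diagonal entry. One must route the moments $\gamma_{\mathbf{u}+\varepsilon_2}$ and $\gamma_{\mathbf{u}+\varepsilon_1+\varepsilon_2}$ along different lattice paths, which is legitimate by the path-independence remark after (\ref{moment1}). One must also apply (\ref{commuting}) at the right point so that the factor $\alpha_{\mathbf{u}}\beta_{\mathbf{u}}$ factors out cleanly. If that computation does not match, we would instead verify directly that $\langle M_{\mathbf{u}}v,v\rangle$ equals $\langle (L-R)\xi,\xi\rangle$ for a suitable vector $\xi$ supported near $\mathbf{u}$.
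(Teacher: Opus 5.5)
Your proof is correct, but it is not the paper's route, because the paper gives no argument for this lemma. It imports the statement from \cite{CLY1}. Within the paper it is also just the case $k=1$ of Lemma \ref{khypon}, since $M_{\mathbf{u}}(1)$ is exactly the $3\times 3$ matrix $M_{\mathbf{u}}$.

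You instead derive it from the Six-point Test (Lemma \ref{joint hypo}) by taking the Schur complement of the entry $\gamma_{\mathbf{u}}>0$. I checked that this complement equals $\gamma_{\mathbf{u}}DH_{\mathbf{u}}D$ with $D=\operatorname{diag}(\alpha_{\mathbf{u}},\beta_{\mathbf{u}})$, entry by entry. The off-diagonal entry works precisely because $\alpha_{\mathbf{u}}^2\beta_{\mathbf{u}+\varepsilon_1}^2=(\alpha_{\mathbf{u}}\beta_{\mathbf{u}})(\alpha_{\mathbf{u}+\varepsilon_2}\beta_{\mathbf{u}+\varepsilon_1})$ by (\ref{commuting}). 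So the fallback in your last paragraph is not needed.

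What your route buys is a short derivation, self-contained apart from Lemma \ref{joint hypo}. It shows explicitly that, for each $\mathbf{u}$ separately, the $3\times 3$ moment matrix at $\mathbf{u}$ and the $2\times 2$ six-point matrix at $\mathbf{u}$ carry the same positivity information. The citation route instead places the statement inside the general $k$--hyponormality criterion, and that general criterion is what the paper actually uses later in Theorem \ref{thm221}(ii)--(iii).

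One caution about your opening paragraph: it misdescribes hyponormality. The quadratic form of $L$ is $\sum_{i,j}\langle T_if_j,T_jf_i\rangle$, not $\|T_1f_1+T_2f_2\|^2$; the latter is the form of the Gramian $L^t$. So $L-R\ge 0$ is not a comparison of $\|T_1^*f_1+T_2^*f_2\|$ with $\|T_1f_1+T_2f_2\|$. Since you then take Lemma \ref{joint hypo} as your starting point, that paragraph carries no weight and can be dropped. The same goes for the remark about rescaling to $e_{\mathbf{u}}/\sqrt{\gamma_{\mathbf{u}}}$, which your computation never uses.
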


\begin{lemma} \label{khypon} ($k$--hyponormality of $2$--variable weighted shifts \cite[Theorem 2.4]{CLY1}) \ Let $\mathbf{T} \equiv\left(T_1, T_2\right)$ be a 2-variable weighted shift with weight sequences $\alpha \equiv\left\{\alpha_{\mathbf{k}}\right\}$ and $\beta \equiv\left\{\beta_{\mathbf{k}}\right\}$. The following statements are equivalent. \newline
(a) \ $\mathbf{T}$ is k-hyponormal. \newline
(b) \ $M_{\mathbf{u}}(k):=\left(\gamma_{\mathbf{u}+(m, n)+(p, q)}\right)_{\substack{0 \leqslant m+n \leqslant k \\ 0 \leqslant p+q \leqslant k}} \geqslant 0 \textrm{ for all } \mathbf{u} \in \mathbb{Z}_{+}^2$.
\end{lemma}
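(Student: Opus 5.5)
This is Lemma \ref{khypon}, a known result quoted from \cite[Theorem 2.4]{CLY1}. I would prove it by the standard reduction of $k$--hyponormality to positivity of a block self-commutator, followed by diagonalization along the lattice.

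Recall that $\mathbf{T}$ is $k$--hyponormal when the $(k+1)$--tuple of monomials $\{T_1^{m}T_2^{n}: 1\le m+n\le k\}$ is (jointly) hyponormal. Equivalently, the operator matrix
$$\left([ (T^{(p,q)})^*, T^{(m,n)} ]\right)$$
indexed by $1\le m+n,\ p+q\le k$ must be positive. Here $T^{(m,n)}:=T_1^mT_2^n$.

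The first step is to test this positivity on vectors built from the basis. Each $T^{(m,n)}$ is a weighted shift carrying $e_{\mathbf{k}}$ to $\sqrt{\gamma_{\mathbf{k}+(m,n)}/\gamma_{\mathbf{k}}}\; e_{\mathbf{k}+(m,n)}$. Consequently the quadratic form
$$\sum \langle [(T^{(p,q)})^*,T^{(m,n)}] x_{pq}, x_{mn}\rangle$$
decouples into independent finite pieces. Each piece is indexed by a base point $\mathbf{w}$, with $x_{mn}$ supported at $\mathbf{w}+(m,n)$, following the same bookkeeping as in the Six-point Test (Lemma \ref{joint hypo}). Writing $x_{mn}=c_{mn}e_{\mathbf{w}+(m,n)}$, the relevant piece is a finite scalar matrix. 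Its entries have the form
$$\frac{\gamma_{\mathbf{w}+(m,n)+(p,q)}}{\gamma_{\mathbf{w}}}\;-\;\frac{\gamma_{\mathbf{w}+(m,n)}\,\gamma_{\mathbf{w}+(p,q)}}{\gamma_{\mathbf{w}}\gamma_{\mathbf{w}}}\cdot(\text{indicator adjustments}),$$
after rescaling $c_{mn}$ by $\sqrt{\gamma_{\mathbf{w}+(m,n)}}$. Care is needed with terms $T^{(m,n)}(T^{(p,q)})^*$ that kill vectors when $\mathbf{w}+(m,n)-(p,q)\notin\mathbb{Z}_+^2$.

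The second step is the main obstacle: recognizing this scalar matrix as a Schur complement. Adjoin the index $(0,0)$ and form the full moment matrix $M_{\mathbf{u}}(k)=(\gamma_{\mathbf{u}+(m,n)+(p,q)})$ with $\mathbf{u}$ chosen appropriately ($\mathbf{u}=\mathbf{w}$). Its $(0,0)$ entry is $\gamma_{\mathbf{u}}>0$. The Schur complement of that entry is
$$\left(\gamma_{\mathbf{u}+(m,n)+(p,q)}-\gamma_{\mathbf{u}+(m,n)}\gamma_{\mathbf{u}+(p,q)}/\gamma_{\mathbf{u}}\right),$$
and this coincides, up to the positive diagonal scaling, with the compression obtained above.

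By Smul'jan's Lemma \ref{lemma1}, or the Choleski criterion of \cite{MKX} with $A=\gamma_{\mathbf{u}}>0$, positivity of $M_{\mathbf{u}}(k)$ is equivalent to positivity of this Schur complement. The delicate part is checking that the boundary cases are covered by ranging over all $\mathbf{u}\in\mathbb{Z}_+^2$. These are the cases in which some backward shifts vanish near the axes. The missing terms correspond exactly to choosing a different base point, so after reindexing the family $\{M_{\mathbf{u}}(k)\}_{\mathbf{u}}$ captures every piece. Taking $k=1$ should reproduce Lemma \ref{khypo}, which serves as a check on the indexing.

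The last step combines the two reductions. The operator matrix is positive iff every finite piece is positive, iff every $M_{\mathbf{u}}(k)\ge 0$. This gives (a)$\Leftrightarrow$(b).
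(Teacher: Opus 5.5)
The paper does not prove this lemma itself; it only quotes \cite[Theorem 2.4]{CLY1}. Your argument is correct and is the standard proof of that result: you split the self-commutator of the monomial tuple into finite blocks indexed by base points $\mathbf{w}$, and you identify the block at $\mathbf{w}\in\mathbb{Z}_+^2$, after a positive diagonal congruence, with the Schur complement of $\gamma_{\mathbf{w}}$ in $M_{\mathbf{w}}(k)$.

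The boundary step needs to be stated correctly, since your stated vanishing condition is off. $T^{\mathbf{m}}(T^{\mathbf{p}})^*$ kills $e_{\mathbf{w}+\mathbf{p}}$ exactly when $\mathbf{w}\notin\mathbb{Z}_+^2$. In that case the block is $(\gamma_{\mathbf{w}+\mathbf{m}+\mathbf{p}})$ over those indices with $\mathbf{w}+\mathbf{m}\in\mathbb{Z}_+^2$, and this is a principal submatrix of $M_{(|w_1|,|w_2|)}(k)$. Also, the monomial tuple has $k(k+3)/2$ members, not $k+1$.
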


\vspace{10pt}

While tensoring two unilateral weighted shifts does not produce interesting examples of $2$--variable weighted shift, the following construction does yield highly nontrivial examples.

\begin{definition} \label{def26} \ (Canonical embedding of two unilateral shifts) \ In $\ell^2(\mathbb{Z}_+)$, consider two unilateral weighted shifts $W_\omega \equiv \operatorname{shift}(\omega_0,\omega_1,\ldots)$ and $W_\eta \equiv \operatorname{shift}(\eta_0,\eta_1,\ldots)$ given by two bounded sequences of positive numbers $\omega$ and $\eta$, resp. \ Let $\alpha,\beta \in \ell^\infty(\mathbb{Z}_+^2)$ be defined as $\alpha_{(k_1,k_2)}:=\omega_{k_1+k_2}$ and $\beta_{(k_1,k_2)}:=\eta_{k_1+k_2}$, for all $\mathbf{k} \in \mathbb{Z}_+^2)$. \ The shift $W_{(\alpha,\beta)}$ is called the canonical embedding of $(W_\omega,W_\eta)$.
\end{definition}

\begin{remark} \label{rem27}
(i) \ Observe that a canonical embedding of $(W_\omega,W_\eta)$ is commutative if and only if 
$$
\omega_\ell \cdot \eta_{\ell+1} = \omega_{\ell+1} \cdot \eta_\ell,
$$
for all $\ell \ge 0$.

(ii) \ In \cite{CR2}, the notion of canonical embedding of a unilateral weighted shift into a $2$--variable weighted shift was introduced. \ Subsequently, in \cite{CLY4}, polynomial embeddings of unilateral weighted shifts were defined. \ Although there are conceptual similarities of these two notions with Definition \ref{def26}, the new notion of canonical embedding provides a more robust and general construction of how {\it two} unilateral shifts can generate, in a natural way, a $2$--variable weighted shift. \ (For a detailed discussion of $2$--variable weighted shifts, the reader is referred to \cite{CLY1,CLY2,CuYo1,CuYo2,CuYo6,CuYo7}.)
\end{remark}


\subsection{The Square Root of a $2 \times 2$ Scalar Matrix}

We briefly discuss a specific formula for the square root of a nonzero $2 \times 2$ matrix over the complex numbers. \ Consider a positive semi-definite $2 \times 2$ matrix $M$, with eigenvalues $\lambda_1$ and $\lambda_2$, and let $\tr$ and $\det$ denote trace and determinant, resp. \ By the Cayley-Hamilton Theorem, we know that $M$ satisfies the characteristic polynomial equation $z^2 - (\tr M) z + \det M = 0$; that is, we have
$$
M^2 - \tr M \cdot M + \det M \cdot I = 0.
$$
Since $M \ge 0$, it has a square root $\sqrt{M} \ge 0$, which satisfies its own characteristic polynomial equation:
\begin{equation} \label{squareroot}
(\sqrt{M})^2 - (\tr \sqrt{M}) \cdot \sqrt{M} + (\det \sqrt{M}) \cdot I = 0.
\end{equation}
We know that $\det \sqrt{M} = \sqrt{\det M}$; to find the trace of $\sqrt{M}$, we need a little algebra:
$$
\tr \sqrt{M} = \sqrt{\lambda_1} + \sqrt{\lambda_2} = \sqrt{\lambda_1 + \lambda_2 + 2 \sqrt{\lambda_1}\sqrt{\lambda_2}} =
\sqrt{\tr M + 2 \sqrt{\det M}}. 
$$ 
From (\ref{squareroot}) we obtain
$$
M - (\sqrt{\tr M + 2 \sqrt{\det M}}) \cdot \sqrt{M} + (\sqrt{\det M}) \cdot I = 0.
$$
If we now solve for $\sqrt{M}$, we obtain
\begin{equation} \label{squareroot2}
\sqrt{M} = \dfrac{M + (\sqrt{\det M}) \cdot I}{\sqrt{\tr M + 2 \sqrt{\det M}}}.
\end{equation}
Unlike other formulas in the literature (which use a combination of quantities related to $M$ and $\sqrt{M}$), the formula (\ref{squareroot2}) expresses $\sqrt{M}$ entirely in terms of $M$, its trace and its determinant. \ Also, in the special case of a singular matrix $M$, we recover the well-known expression $\sqrt{M}=\dfrac{M}{\sqrt{\tr M}}$.

\bigskip


\subsection{The Square Root of a $2 \times 2$ Operator Matrix} \label{commL} 

As we know, the Cayley-Hamilton Theorem holds in any unital commutative ring. \ The formula (\ref{squareroot2}), however, needs a notion of adjoint and the invertibility of the trace of a nonzero positive matrix. \ Thus, if we restrict attention to $2 \times 2$ operator matrices with entries in an abelian unital $C^*$--subalgebra of $\mathcal{B}(\mathcal{H})$, (\ref{squareroot2}) still holds. \ In this section, our goal is to obtain a concrete formula for the square root of a positive $2 \times 2$ operator matrix with commuting entries and, in the process, to give a solution to Problem \ref{Prob1}.

Given a nonzero matrix 
$$
M=\left(
\begin{array}{ll}
A & X \\
X^{\ast } & B
\end{array}
\right) \in \mathcal{B}(\mathcal{H} \oplus \mathcal{H}), 
$$
with commuting entries, let $\mathcal{C}$ be the unital $C^*$--subalgebra of $\mathcal{B}(\mathcal{H})$ generated by $A$, $X$, and $B$. \ Now consider the algebra $M_2(\mathcal{C})$ of $2 \times 2$ matrices with entries from $\mathcal{C}$. \ It is well known that $M_2(\mathcal{C})$ can be made into a $C^*$--algebra, if we define the norm of a $2 \times 2$ matrix $M$ as the operator norm of $M$ acting on $\mathcal{B}(\mathcal{H} \oplus \mathcal{H})$. \ In this setting we can now state a concrete formula for the square root of a positive matrix $M$ with commuting entries satisfying the conditions in Problem \ref{Prob1}. \ As in the scalar case, we let $\tr$ and $\det$ denote the trace and the determinant of a matrix; that is, $\tr M := A+B$ and $\det M := AB-X^*X$. \ Under the assumption of positivity for $M$, we know that $\sqrt{M}$ exists and is a positive element of $M_2(\mathcal{C})$. \ In addition, we also know that both $\tr M$ and $\det M$ are positive operators, and therefore they have square roots which belong to $\mathcal{C}$. \ Finally, observe that $\tr M$ is a positive and invertible operator in $\mathcal{C}$, since it dominates $A \in \mathcal{B}_{++}(\mathcal{H})$.

\begin{theorem} \label{Thm1} \ Assume that the matrix 
$$
M=\left(
\begin{array}{ll}
A & X \\
X^{\ast } & B
\end{array}
\right) \in \mathcal{B}(\mathcal{H} \oplus \mathcal{H}), 
$$
is positive, has commuting entries, and $A \in \mathcal{B}_{++}(\mathcal{H})$. \ Then
\begin{equation*}
\sqrt{M}=\left(
\begin{array}{ll}
\left( \tr M+2Q\right) ^{-\frac{1}{2}} & 0 \\
0 & \left( \tr M+2Q\right) ^{-\frac{1}{2}}%
\end{array}%
\right) \left(
\begin{array}{ll}
A+Q & X \\
X^{\ast } & B+Q%
\end{array}%
\right) .
\end{equation*}
\end{theorem}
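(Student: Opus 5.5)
Here $Q$ is understood as $Q:=\sqrt{\det M}=(AB-X^*X)^{1/2}$, in analogy with (\ref{squareroot2}). The plan is to show that the displayed operator, call it $S$, is positive and satisfies $S^2=M$; uniqueness of positive square roots then gives $S=\sqrt{M}$. All computations take place in the commutative unital $C^*$--algebra $\mathcal{C}$ generated by $A,X,B$ (hence normal and commuting with $X^*$). Since $M\ge 0$ and $A\in\mathcal{B}_{++}(\mathcal{H})$, the Schur complement criterion of Moslehian--Kian--Xu gives $B\ge X^*A^{-1}X$. Because $A$ commutes with $X,X^*$, this yields $AB-X^*X=A^{1/2}(B-X^*A^{-1}X)A^{1/2}\ge 0$. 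So $Q$ is well defined, positive, and lies in $\mathcal{C}$. Moreover $\tr M+2Q\ge A$ is invertible, so $D:=(\tr M+2Q)^{-1/2}\in\mathcal{C}$ is positive and invertible.

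Next I would verify $S^2=M$ by a direct Cayley--Hamilton computation in $M_2(\mathcal{C})$. Write $N:=M+Q\oplus Q$; since $D\oplus D$ is central in $M_2(\mathcal{C})$, we have $S=(D\oplus D)N$ and $S^2=(D^2\oplus D^2)N^2$. Expanding,
\[
N^2=M^2+2QM+Q^2\oplus Q^2 .
\]
By Cayley--Hamilton over the commutative ring $\mathcal{C}$, we have $M^2=(\tr M)M-(\det M)\oplus(\det M)$, and $\det M=Q^2$. Substituting, $N^2=(\tr M+2Q)M$. Multiplying by $D^2=(\tr M+2Q)^{-1}$ gives $S^2=M$. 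The Cayley--Hamilton identity is checked entrywise. For example, the $(1,1)$ entry of $M^2$ is $A^2+XX^*$, which equals $(A+B)A-(AB-X^*X)$ because $XX^*=X^*X$ and $AB=BA$. The other entries are similar.

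Positivity of $S$ is the step I expect to need care. I would first note that $S$ is self-adjoint, since $D$ commutes with $A,B,Q,X,X^*$. Then $S\ge 0$ iff $N\ge 0$, because $S=(D\oplus D)^{1/2}N(D\oplus D)^{1/2}$. To check $N\ge 0$, I would use the Gelfand representation: $\mathcal{C}\cong C(K)$ and $M_2(\mathcal{C})\cong C(K,M_2(\mathbb{C}))$, with positivity checked pointwise. At each point $t\in K$, the scalar matrix $M(t)$ is positive semidefinite. Also $Q(t)=\sqrt{\det M(t)}$, so $N(t)=M(t)+\sqrt{\det M(t)}\,I$ is positive semidefinite. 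Alternatively, apply Lemma \ref{lemma1}-style reasoning: $A+Q,B+Q\ge 0$, and $(A+Q)(B+Q)-X^*X=2Q^2+Q(A+B)\ge0$ in the commutative algebra. Hence $S\ge0$ with $S^2=M$, and uniqueness of the positive square root finishes the proof. At the scalar level this recovers (\ref{squareroot2}).
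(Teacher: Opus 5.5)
Your proof is correct, but it runs in the opposite direction from the paper's. The paper's one-line proof points back to the derivation of (\ref{squareroot2}). There, Cayley--Hamilton is applied to $\sqrt{M}$ itself, the identities $\det\sqrt{M}=\sqrt{\det M}$ and $\tr\sqrt{M}=\sqrt{\tr M+2\sqrt{\det M}}$ are read off from eigenvalues, and one solves for $\sqrt{M}$. The operator case is then said to follow ``appropriately adjusted'' in $M_2(\mathcal{C})$.

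You instead write down the candidate $S$ and apply Cayley--Hamilton to $M$ rather than to $\sqrt{M}$. This gives $(M+Q\oplus Q)^2=(\tr M+2Q)M$, hence $S^2=M$, and you conclude from $S\ge 0$ and uniqueness of positive square roots.

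The paper's route explains where the formula comes from. Making it rigorous for operators, however, needs several facts the paper leaves implicit:
\begin{itemize}
\item $\sqrt{M}\in M_2(\mathcal{C})$, so that its entries commute and Cayley--Hamilton applies to it;
\item a replacement for the eigenvalue computation, e.g.\ taking traces in the Cayley--Hamilton identity for $\sqrt{M}$ gives $(\tr\sqrt{M})^2=\tr M+2\det\sqrt{M}$;
\item positivity of $\det\sqrt{M}$, so that it equals $Q$ rather than some other square root of $\det M$;
\item invertibility of $\tr\sqrt{M}$.
\end{itemize}

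Your verification needs no a priori information about $\sqrt{M}$. Its only analytic inputs are $\det M\ge 0$ and $M+Q\oplus Q\ge 0$. You justify these via the Schur complement and the Gelfand representation $M_2(\mathcal{C})\cong C(K,M_2(\mathbb{C}))$, respectively. So your argument supplies exactly the details the paper's proof glosses over, at the cost of presupposing the formula rather than deriving it.
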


\begin{proof}
This is straightforward from the formula in (\ref{squareroot2}), appropriately adjusted to the case of operator matrices in $M_2(\mathcal{C})$.
\end{proof}

Next, we consider the special case when $\det M=0$. \ The following result is a direct application of Theorem \ref{Thm1}.

\begin{corollary}
\label{Cor1} Let $M\in \mathcal{B}(\mathcal{H}\oplus \mathcal{H})$ be as in
Theorem \ref{Thm1}, and assume that $\det M=0$. \ Then
\begin{equation*}
\sqrt{M}=\left(
\begin{array}{ll}
\left( \tr M\right) ^{-\frac{1}{2}} & 0 \\
0 & \left( \mathrm{Tr}M\right) ^{-\frac{1}{2}}%
\end{array}%
\right) \left(
\begin{array}{ll}
A & X \\
X^{\ast } & B%
\end{array}%
\right) .
\end{equation*}
\end{corollary}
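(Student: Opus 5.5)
The corollary is the case $\det M = 0$ of Theorem \ref{Thm1}, so I would simply specialize that formula. In Theorem \ref{Thm1} the operator $Q$ is the positive square root of the positive element $\det M = AB - X^*X \in \mathcal{C}$, i.e.\ $Q = (\det M)^{1/2}$, as in the scalar formula (\ref{squareroot2}). Assuming $\det M = 0$, the first step is to note that $Q = 0$. This holds because the positive square root of the zero operator is zero, by uniqueness of positive square roots, or by continuous functional calculus in the abelian $C^*$-algebra $\mathcal{C}$.

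Next, I would substitute $Q=0$ into the formula of Theorem \ref{Thm1}. The diagonal factor becomes $(\tr M)^{-1/2}\oplus(\tr M)^{-1/2}$. The second factor becomes $\begin{pmatrix} A & X\\ X^* & B\end{pmatrix} = M$. This gives exactly the stated expression; the symbol $\mathrm{Tr}M$ in the second diagonal entry is just $\tr M$.

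Finally, I would check that $(\tr M)^{-1/2}$ makes sense. Here $\tr M = A + B \ge A$, since $B \ge 0$ by Lemma \ref{lemma1} applied to $M \ge 0$. Because $A \in \mathcal{B}_{++}(\mathcal{H})$, this makes $\tr M$ positive and invertible in $\mathcal{C}$, so its inverse square root exists in $\mathcal{C}$. There is no real obstacle; the only point needing care is this invertibility, which is exactly what the hypotheses of Theorem \ref{Thm1} guarantee.
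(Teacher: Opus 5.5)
Your proposal is correct and matches the paper, which presents the corollary as a direct application of Theorem \ref{Thm1}: with $\det M=0$ you get $Q=(\det M)^{1/2}=0$, and the formula reduces to the stated one. Your check that $\tr M \ge A$ is invertible is the same observation the paper makes just before Theorem \ref{Thm1}.
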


\medskip
Our next result requires only that $AX=XA$.

\begin{proposition}
\label{Prop1} Let $M=\left(
\begin{array}{ll}
A & X \\
X^{\ast } & B%
\end{array}%
\right) \in \mathcal{B}(\mathcal{H}\oplus \mathcal{H})$, $B,X\in \mathcal{B}(%
\mathcal{H})$, and $A\in \mathcal{B}_{++}(\mathcal{H})$. \ Assume that $\det M = 0$ and that $AX=XA$. \ Then $M$ is a {\it flat extension} of $A$, i.e.,
$$
M=\left(
\begin{array}{ll}
A & AW \\
W^{\ast }A & W^{\ast }AW
\end{array}
\right)
$$
for some $W\in \mathcal{B}(\mathcal{H})$; that is, $X=AW$ and $B=W^*AW$.
\end{proposition}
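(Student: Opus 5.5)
The natural choice is $W:=A^{-1}X$, which makes sense because $A\in\mathcal{B}_{++}(\mathcal{H})$ is invertible. With this choice the identity $X=AW$ is automatic, so everything reduces to proving $B=W^*AW$. Since $A$ is self-adjoint, $W^*AW=X^*A^{-1}AA^{-1}X=X^*A^{-1}X$. The claim is therefore equivalent to
$$
B = X^*A^{-1}X,
$$
which is exactly the statement that the Schur complement of $A$ in $M$ vanishes.

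To extract this from $\det M=0$, I will use the hypothesis $AX=XA$. Taking adjoints and using $A=A^*$ gives $X^*A=AX^*$, so $A$ also commutes with $X^*$. Consequently $A^{-1}$ commutes with both $X$ and $X^*$. Here $\det M$ is understood as in the preceding subsection, $\det M=AB-X^*X$. Then
$$
0=\det M = AB - X^*X = AB - X^*AA^{-1}X = AB - AX^*A^{-1}X = A\,(B-X^*A^{-1}X),
$$
where the third equality uses $X^*A=AX^*$. Multiplying on the left by $A^{-1}$ yields $B=X^*A^{-1}X=W^*AW$, and the block form of $M$ follows immediately.

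The only delicate point is the interpretation of $\det M$ when $B$ need not commute with $A$. If the intended convention is instead $BA-X^*X$, the same argument works by factoring $A$ on the right, since $X^*X=X^*A^{-1}XA$ by commutativity. It is also reassuring, though not needed for the proof, that this agrees with positivity: by the Moslehian--Kian--Xu criterion, $M\ge0$ iff $B\ge X^*A^{-1}X$, and here equality holds. The main obstacle is simply ensuring that $A^{-1}$ can be moved past $X^*$, and this is exactly what $AX=XA$ together with self-adjointness of $A$ provides.
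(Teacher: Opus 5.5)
Your proof is correct. Its core is the same identity the paper uses: once $X=AW$, the relation $AX^*=X^*A$ (from $AX=XA$ by taking adjoints) lets you factor $\det M=AB-X^*X$ as $A\,(B-W^*AW)$. Invertibility of $A$ then forces this Schur complement to vanish.

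The difference is in how you produce $W$. The paper applies Smul'jan's Lemma to write $X=AW$ and $B=W^*AW+C$ with $C\ge 0$. That step tacitly requires $M\ge 0$, which is not a hypothesis of the proposition as stated. Moreover, the sign of $C$ is never used, since $AC=0$ with $A$ invertible already gives $C=0$. You instead take $W=A^{-1}X$ directly, which is available because $A\in\mathcal{B}_{++}(\mathcal{H})$.

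This makes your argument more elementary and slightly more general: it needs no positivity assumption on $M$. As your closing remark via the Moslehian--Kian--Xu criterion shows, positivity of $M$ is then a consequence of the hypotheses rather than an input.
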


\begin{proof}
Since $A$ is invertible, Lemma 2.2 readily implies that
$$
M=\left(
\begin{array}{ll}
A & AW \\
W^{\ast }A & W^{\ast }AW%
\end{array}%
\right) +\left(
\begin{array}{ll}
0 & 0 \\
0 & C
\end{array}\right)
=
\left(
\begin{array}{ll}
A & AW \\
W^{\ast }A & W^{\ast }AW + C
\end{array}%
\right)
$$
for some $C,W\in \mathcal{B}(\mathcal{H})$ with $C\geq 0$. \ Then
\begin{eqnarray*}
\det M &=&AW^{\ast }AW+AC-W^{\ast }AAW=AX^{\ast }W-X^{\ast }X+AC \\
&=&X^{\ast }AW-X^{\ast }X+AC=X^{\ast }X-X^{\ast }X+AC=AC.
\end{eqnarray*}%
Since $\det M=0$, it follows that $AC=0$, and therefore $C=0$. \ Then $M$ is a flat extension of $A$, as desired.
\end{proof}


\section{Main Results} \label{Sec3}

\subsection{The Homogeneous Orthogonal Decomposition of $\ell^2(\mathbb{Z}_+^2)$} \label{subsection25}

The Hilbert space $\ell^2(\mathbb{Z}_+^2)$ admits a decomposition as an orthogonal direct sum of finite dimensional subspaces of increasing dimension. \ That is, for $n \ge 0$, let
$$
\mathcal{K}(n) := \bigvee \{e_{\mathbf{k}}: \mathbf{k} \equiv (k_1,k_2) \in \mathbb{Z}_+^2 \textrm{ and } k_1+k_2 = n \}.
$$
Then 
\begin{equation} \label{homog}
\ell^2(\mathbb{Z}_+^2) = \bigoplus_{n=0}^\infty \mathcal{K}(n).
\end{equation}
The orthogonal decomposition in (\ref{homog}), visualized in Figure 2(i), is called the homogeneous decomposition of $\ell^2(\mathbb{Z}_+^2)$. \ This decomposition will be very useful in the sequel, particularly when analyzing operators of the form $T_j^{\ast}T_i$ and $T_jT_i^{\ast}$, where $i,j=1,2$, which appear as entries in $L$ and $R$. \ In the context of $2$--variable weighted shifts, the homogeneous decomposition was previously used in \cite[Proposition 2.6]{CMX}.

\setlength{\unitlength}{.5mm} \psset{unit=.5mm} 
\begin{figure}[th]
\begin{center}
\begin{picture}(170,95)

\rput(-45,43){
\psline[linestyle=dotted](20,20)(85,20)
\psline[linestyle=dotted](20,40)(83,40)
\psline[linestyle=dotted](20,60)(83,60) 
\psline[linestyle=dotted](20,80)(83,80)


\psline[linestyle=dotted](20,40)(20,85) 
\psline[linestyle=dotted](20,20)(20,85)
\psline[linestyle=dotted](40,20)(40,83)
\psline[linestyle=dotted](60,20)(60,83) 
\psline[linestyle=dotted](80,20)(80,83)

\put(14,12){\footnotesize{$(0,0)$}}
\put(34.5,12){\footnotesize{$(1,0)$}}
\put(54.5,12){\footnotesize{$(2,0)$}}
\put(74.5,12){\footnotesize{$(3,0)$}}

\psline[linecolor=blue](40,20)(20,40)
\psline[linecolor=red](60,20)(20,60)
\psline[linecolor=green](80,20)(20,80)

\put(49,53){${\color{darkgreen}\mathcal{K}(3)}$}
\put(48,34){${\color{red}\mathcal{K}(2)}$}
\put(29,32){${\color{blue}\mathcal{K}(1)}$}
\put(21,23){${\color{black}\mathcal{K}(0)}$}

\put(81,21){\footnotesize{$\cdots$}}

\put(81,41){\footnotesize{$\cdots$}}

\put(81,61){\footnotesize{$\cdots$}}

\put(27,81){\footnotesize{$\cdots$}}
\put(47,81){\footnotesize{$\cdots$}}
\put(67,81){\footnotesize{$\cdots$}}


\put(5,39){\footnotesize{$(0,1)$}}
\put(5,59){\footnotesize{$(0,2)$}}
\put(5,79){\footnotesize{$(0,3)$}}

\put(21,81){\footnotesize{$\vdots$}}

\put(41,81){\footnotesize{$\vdots$}}

\put(61,81){\footnotesize{$\vdots$}}

\pscircle[fillstyle=solid,fillcolor=black](20,20){2}
\pscircle[fillstyle=solid,fillcolor=blue](20,40){2}
\pscircle[fillstyle=solid,fillcolor=red](20,60){2}
\pscircle[fillstyle=solid,fillcolor=blue](40,20){2}
\pscircle[fillstyle=solid,fillcolor=red](40,40){2}
\pscircle[fillstyle=solid,fillcolor=red](60,20){2}
\pscircle[fillstyle=solid,fillcolor=darkgreen](80,20){2}
\pscircle[fillstyle=solid,fillcolor=darkgreen](60,40){2}
\pscircle[fillstyle=solid,fillcolor=darkgreen](40,60){2}
\pscircle[fillstyle=solid,fillcolor=darkgreen](20,80){2}
}

\put(-70,-8){(i) Homogeneous decomposition of $\ell^2(\mathbb{Z}_+)$}. 
\put(100,-8){(ii) Block matrix representation of $L$ and $R$}.


\setlength{\unitlength}{.4mm} \psset{unit=.4mm} 

\rput(155,2){
\psline[linestyle=solid](10,10)(90,10)
\psline[linestyle=dotted](10,20)(90,20)
\psline[linestyle=dotted](10,30)(90,30)
\psline[linestyle=dotted](10,40)(90,40)
\psline[linestyle=dotted](10,50)(90,50)
\psline[linestyle=dotted](10,60)(90,60)
\psline[linestyle=dotted](10,70)(90,70)
\psline[linestyle=dotted](10,80)(90,80)
\psline[linecolor=red,linestyle=solid](10,80)(40,80)
\psline[linecolor=red,linestyle=solid](40,40)(80,40)
\psline[linestyle=solid](10,90)(90,90)
\psline[linecolor=red,linestyle=solid](20,60)(60,60)
\psline[linecolor=red,linestyle=solid](60,20)(90,20)

\psline[linestyle=solid](10,10)(10,90)
\psline[linestyle=dotted](20,10)(20,90)
\psline[linestyle=dotted](30,10)(30,90)
\psline[linestyle=dotted](40,10)(40,90)
\psline[linestyle=dotted](50,10)(50,90)
\psline[linestyle=dotted](60,10)(60,90)
\psline[linestyle=dotted](70,10)(70,90)
\psline[linestyle=dotted](80,10)(80,90)
\psline[linestyle=solid](90,10)(90,90)
\psline[linecolor=red,linestyle=solid](40,80)(40,40)
\psline[linecolor=red,linestyle=solid](60,60)(60,20)
\psline[linecolor=red,linestyle=solid](20,90)(20,60)
\psline[linecolor=red,linestyle=solid](80,40)(80,10)

}

\end{picture}
\caption{}
\end{center}
\end{figure}

In the special case of $2$--variable weighted shifts, the finite-dimensional spaces $\mathcal{K}(n)$ are actually reducing subspaces for $L$ and $R$. \ The restrictions of these operator matrices to $\mathcal{K}(n)$ are unitarily equivalent to a direct sum of $1 \times 1$ and $2 \times 2$ matrices, as established in the following result. \ In the special case of $n=3$, the restrictions of $L$ and $R$ to the reducing subspace $\mathcal{K}(3)$ are unitarily equivalent to $8 \times 8$ block diagonal matrices of the form $(1 \times 1) \oplus (2 \times 2) \oplus (2 \times 2) \oplus (2 \times 2) \oplus (1 \times 1)$, as shown in Figure 2(ii).
 
\begin{theorem}
\label{Thm6} Consider a $2$--variable weighted shift $W_{(\alpha ,\beta)}=(T_{1},T_{2})$. \ Then for $n\geq 0$,
\begin{equation}
\begin{tabular}{l}
$L|_{\mathcal{K}(n)}\cong (\alpha _{(0,n)}^{2}) \oplus \left[\bigoplus_{i=1}^n 
\left(
\begin{array}{cc}
\alpha_{(i,n-i)}^2 & \alpha_{(i-1,n-i+1)}\beta_{(i,n-i)} \\
\alpha_{(i-1,n-i+1)}\beta_{(i,n-i)} & \beta_{(i-1,n-i+1)}^2 
\end{array}
\right)\right] \oplus (\beta _{(n,0)}^{2})$ \\
and \\
$R|_{\mathcal{K}(n)}\cong (0) \oplus \left[\bigoplus_{i=1}^n 
\left(
\begin{array}{cc}
\alpha_{(i-1,n-i)}^2 & \alpha_{(i-1,n-i)}\beta_{(i-1,n-i)} \\
\alpha_{(i-1,n-i)}\beta_{(i-1,n-i)} & \beta_{(i-1,n-i)}^2 
\end{array}
\right) \right]\oplus (0)$,
\end{tabular}
\label{Decomposition}
\end{equation}
where $L|_{\mathcal{K}(n)}$ (resp. $R|_{\mathcal{K}(n)}$) is the restriction of $L$ (resp. $R$) to the reducing subspace $\mathcal{K}(n)$.
\end{theorem}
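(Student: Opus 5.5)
The plan is to compute $L$ and $R$ directly on the canonical orthonormal basis. The first thing to settle is which subspace is meant: since $L$ and $R$ act on $\ell^2(\mathbb{Z}_+^2)\oplus\ell^2(\mathbb{Z}_+^2)$, I read the reducing subspace in the statement as $\mathcal{K}(n)\oplus\mathcal{K}(n)$, of dimension $2(n+1)$. For $n=3$ this gives $8$, consistent with Figure 2(ii).

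I would first record the action of the four operator entries on a basis vector $e_{\mathbf{k}}$, using $T_1^*e_{\mathbf{k}}=\alpha_{\mathbf{k}-\varepsilon_1}e_{\mathbf{k}-\varepsilon_1}$ (interpreted as $0$ when $k_1=0$) and the analogous formula for $T_2^*$:
\begin{align*}
T_1^*T_1e_{\mathbf{k}}&=\alpha_{\mathbf{k}}^2e_{\mathbf{k}}, \qquad T_2^*T_2e_{\mathbf{k}}=\beta_{\mathbf{k}}^2e_{\mathbf{k}},\\
T_1^*T_2e_{\mathbf{k}}&=\beta_{\mathbf{k}}\alpha_{\mathbf{k}+\varepsilon_2-\varepsilon_1}e_{\mathbf{k}+\varepsilon_2-\varepsilon_1}, \qquad T_2^*T_1e_{\mathbf{k}}=\alpha_{\mathbf{k}}\beta_{\mathbf{k}+\varepsilon_1-\varepsilon_2}e_{\mathbf{k}+\varepsilon_1-\varepsilon_2},
\end{align*}
and similarly for $R$:
\begin{align*}
T_1T_1^*e_{\mathbf{k}}&=\alpha_{\mathbf{k}-\varepsilon_1}^2e_{\mathbf{k}},\qquad T_2T_2^*e_{\mathbf{k}}=\beta_{\mathbf{k}-\varepsilon_2}^2e_{\mathbf{k}},\\
T_2T_1^*e_{\mathbf{k}}&=\alpha_{\mathbf{k}-\varepsilon_1}\beta_{\mathbf{k}-\varepsilon_1}e_{\mathbf{k}-\varepsilon_1+\varepsilon_2},\qquad T_1T_2^*e_{\mathbf{k}}=\beta_{\mathbf{k}-\varepsilon_2}\alpha_{\mathbf{k}-\varepsilon_2}e_{\mathbf{k}-\varepsilon_2+\varepsilon_1}.
\end{align*}
Every one of these maps $e_{\mathbf{k}}$ to a multiple of some $e_{\mathbf{k}'}$ with $|\mathbf{k}'|=|\mathbf{k}|$. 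Hence $\mathcal{K}(n)\oplus\mathcal{K}(n)$ is invariant under $L$ and $R$, and since both operators are self-adjoint it is reducing.

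Next I would choose the pairing that block-diagonalizes the restrictions. For $1\le i\le n$, take the two-dimensional subspace spanned by $e_{(i,n-i)}\oplus 0$ and $0\oplus e_{(i-1,n-i+1)}$. Using $L(f\oplus g)=(T_1^*T_1f+T_2^*T_1g,\;T_1^*T_2f+T_2^*T_2g)$, the formulas above give
\[
L(e_{(i,n-i)}\oplus0)=\alpha_{(i,n-i)}^2\,e_{(i,n-i)}\oplus\beta_{(i,n-i)}\alpha_{(i-1,n-i+1)}\,e_{(i-1,n-i+1)}
\]
and
\[
L(0\oplus e_{(i-1,n-i+1)})=\alpha_{(i-1,n-i+1)}\beta_{(i,n-i)}\,e_{(i,n-i)}\oplus\beta_{(i-1,n-i+1)}^2\,e_{(i-1,n-i+1)}.
\]
This is exactly the $i$-th $2\times2$ block in the statement. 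Two basis vectors are left over, $e_{(0,n)}\oplus0$ and $0\oplus e_{(n,0)}$. On the first, the off-diagonal term $T_1^*T_2e_{(0,n)}$ vanishes because it would have to land at first index $-1$, so it is an eigenvector with eigenvalue $\alpha_{(0,n)}^2$. On the second, $T_2^*T_1e_{(n,0)}=0$ likewise, giving eigenvalue $\beta_{(n,0)}^2$.

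The same pairing works for $R$. Computing $R(e_{(i,n-i)}\oplus0)$ and $R(0\oplus e_{(i-1,n-i+1)})$ yields the rank-one block with entries $\alpha_{(i-1,n-i)}^2$, $\alpha_{(i-1,n-i)}\beta_{(i-1,n-i)}$ and $\beta_{(i-1,n-i)}^2$. On the two leftover vectors, $R$ gives $0$, because $T_1^*e_{(0,n)}=0$ and $T_2^*e_{(n,0)}=0$. Reordering the basis as
\[
e_{(0,n)}\oplus0,\ \{e_{(i,n-i)}\oplus0,\ 0\oplus e_{(i-1,n-i+1)}\}_{i=1}^n,\ 0\oplus e_{(n,0)}
\]
is a permutation of an orthonormal basis, i.e.\ a unitary, and this produces the stated direct sums.

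No commutativity relation (\ref{commuting}) is needed. The main obstacle is purely bookkeeping: getting the index shifts in the cross terms right, and noticing the correct pairing of first-copy index $(i,n-i)$ with second-copy index $(i-1,n-i+1)$ rather than the naive diagonal pairing. Once that pairing is fixed, the verification is a mechanical check of the displayed formulas.
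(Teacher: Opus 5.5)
Your proposal is correct and follows the same route as the paper: you compute the action of the four entries of $L$ and $R$ on the canonical basis of $\mathcal{K}(n)\oplus\mathcal{K}(n)$, and then permute that orthonormal basis to exhibit the block-diagonal form. Your reading of the reducing subspace as $\mathcal{K}(n)\oplus\mathcal{K}(n)$ agrees with the paper's own $2\times 2$ matrices for $n=0$, and your explicit pairing of $e_{(i,n-i)}\oplus 0$ with $0\oplus e_{(i-1,n-i+1)}$ handles general $n$ more cleanly than the paper, which computes $n\le 2$ explicitly and for $n\ge 3$ only loosely describes the reordering as interchanging rows and columns $3$ and $4$, $5$ and $6$, and so on.
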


\begin{proof}
As a first step, we calculate the actions of the operator matrices $L$ and $R$ on the basis vectors of $\mathcal{K}(n)$. \ 

{\bf Case} $n=0$: \ The canonical basis vector for $\mathcal{K}(0)$ is $e_{(0,0)}$, and $T_1^*T_1e_{(0,0)}=\alpha_{(0,0)}^2e_{(0,0)}$, $T_2^*T_1e_{(0,0)}=0$, $T_1^*T_2e_{(0,0)}=0$, and $T_2^*T_2e_{(0,0)}=\beta_{(0,0)}^2e_{(0,0)}$. \ On the other hand, $T_1T_1^*e_{(0,0)}=T_2T_1^*e_{(0,0)}=T_1T_2^*e_{(0,0)} = T_2T_2^*e_{(0,0)}=0$. \ Thus,
$$
L|_{\mathcal{K}(0)} = \left(
\begin{array}{cc}
\alpha _{(0,0)}^{2} & 0 \\
0 & \beta_{(0,0)}^2
\end{array}%
\right) \quad \quad \quad \textrm{ and } 
R|_{\mathcal{K}(0)} = \left(
\begin{array}{cc}
0 & 0 \\
0 & 0
\end{array}
\right)
$$
{\bf Case} $n=1$: \ Here $e_{(0,1)}$ and $e_{(1,0)}$ are the canonical basis vectors for $\mathcal{K}(1)$. \ We obtain
$$
L|_{\mathcal{K}(1)} = \left(
\begin{array}{cccc}
\alpha _{(0,1)}^{2} & 0 & 0 & 0 \\
0 & \alpha_{(1,0)}^2 & \alpha_{(0,1)}\beta_{(1,0)} & 0 \\
0 & \alpha_{(0,1)}\beta_{(1,0)} & \beta_{(0,1)}^2 & 0 \\
0 & 0 & 0 & \beta_{(1,0)}^2 
\end{array}%
\right), \; 
R|_{\mathcal{K}(0)} = \left(
\begin{array}{cccc}
0 & 0 & 0 & 0 \\
0 & \alpha_{(0,0)}^2 & \alpha_{(0,0)}\beta_{(0,0)} & 0 \\
0 & \alpha_{(0,0)}\beta_{(0,0)} & \beta_{(0,0)}^2 & 0 \\
0 & 0 & 0 & 0 
\end{array}%
\right).
$$
{\bf Case} $n = 2$: \ $\mathcal{K}(2)$ is $3$--dimensional, with canonical basis vectors $e_{(0,2)}$, $e_{(1,1)}$ and $e_{(2,0)}$. \ We readily obtain
$$
L|_{\mathcal{K}(2)} = \left(
\begin{array}{cccccc}
\alpha _{(0,2)}^{2} & 0 & 0 & 0 & 0 & 0 \\
0 & \alpha_{(1,1)}^2 & 0 & \alpha_{(0,2)}\beta_{(1,1)} & 0 & 0 \\
0 & 0 & \alpha_{(2,0)}^2 & 0 & \alpha_{(1,1)}\beta_{(2,0)} & 0 \\
0 & \alpha_{(0,2)}\beta_{(1,1)} & 0 & \beta_{(0,2)}^2 & 0 & 0 \\
0 & 0 & \alpha_{(1,1)}\beta_{(2,0)} & 0 & \beta_{(1,1)}^2 & 0 \\
0 & 0 & 0 & 0 & 0 & \beta_{(2,0)}^2
\end{array}
\right) 
$$
and
$$ 
R|_{\mathcal{K}(2)} = \left(
\begin{array}{cccccc}
0 & 0 & 0 & 0 & 0 & 0 \\
0 & \alpha_{(0,1)}^2 & 0 & \alpha _{(0,1)} \beta _{(0,1)} & 0 & 0 \\
0 & 0 & \alpha_{(1,0)}^2 & 0 & \alpha_{(1,0)} \beta_{(1,0)} & 0 \\
0 & \alpha_{(0,1)} \beta_{(0,1)} & 0 & \beta_{(0,1)}^2 & 0 & 0 \\
0 & 0 & \alpha_{(1,02)}\beta_{(1,0)} & 0 & \beta_{(1,0)}^2 & 0 \\
0 & 0 & 0 & 0 & 0 & 0 \\
\end{array}%
\right). 
$$
If we now interchange the third and fourth rows and columns in both $6 \times 6$ matrices, we \linebreak obtain the modified matrices
$$
\widetilde{L|_{\mathcal{K}(2)}} = \left(
\begin{array}{cccccc}
\alpha _{(0,2)}^{2} & 0 & 0 & 0 & 0 & 0 \\
0 & \alpha_{(1,1)}^2 & \alpha_{(0,2)}\beta_{(1,1)} & 0 & 0 & 0 \\
0 & \alpha_{(0,2)}\beta_{(1,1)} & \beta_{(0,2)}^2 & 0 & 0 & 0 \\
0 & 0 & 0 & \alpha_{(2,0)}^2 & \alpha_{(1,1)}\beta_{(2,0)} & 0 \\
0 & 0 & 0 & \alpha_{(1,1)}\beta_{(2,0)} & \beta_{(1,1)}^2 & 0 \\
0 & 0 & 0 & 0 & 0 & \beta_{(2,0)}^2
\end{array}
\right)
$$
and
$$ 
\widetilde{R|_{\mathcal{K}(2)}} = \left(
\begin{array}{cccccc}
0 & 0 & 0 & 0 & 0 & 0 \\
0 & \alpha_{(0,1)}^2 & \alpha _{(0,1)} \beta _{(0,1)} & 0 & 0 & 0 \\
0 & \alpha_{(0,1)} \beta_{(0,1)} & \beta_{(0,1)}^2 & 0 & 0 & 0 \\
0 & 0 & 0 & \alpha_{(1,0)}^2 & \alpha_{(1,0)} \beta_{(1,0)} & 0 \\
0 & 0 & 0 & \alpha_{(1,02)}\beta_{(1,0)} & \beta_{(1,0)}^2 & 0 \\
0 & 0 & 0 & 0 & 0 & 0 \\
\end{array}%
\right). 
$$
As a result, we see that $L|_{\mathcal{K}(2)}$ and $R|_{\mathcal{K}(2)}$ are unitarily equivalent to the direct sums
$$
(\alpha _{(0,2)}^{2}) \oplus 
\left(
\begin{array}{cc}
\alpha_{(1,1)}^2 & \alpha_{(0,2)}\beta_{(1,1)} \\
\alpha_{(0,2)}\beta_{(1,1)} & \beta_{(0,2)}^2 
\end{array}
\right) \oplus
\left(
\begin{array}{cc}
\alpha_{(2,0)}^2 & \alpha_{(1,1)} \beta_{(2,0)} \\
\alpha_{(1,1)}\beta_{(2,0)} & \beta_{(1,1)}^2
\end{array}
\right) \oplus (\beta_{(2,0)}^2)
$$
and
$$
(0) \oplus 
\left(
\begin{array}{cc}
\alpha_{(0,1)}^2 & \alpha_{(0,1)}\beta_{(0,1)} \\
\alpha_{(0,1)}\beta_{(0,1)} & \beta_{(0,1)}^2 
\end{array}
\right) \oplus
\left(
\begin{array}{cc}
\alpha_{(1,0)}^2 & \alpha_{(1,0)} \beta_{(1,0)} \\
\alpha_{(1,0)}\beta_{(1,0)} & \beta_{(1,0)}^2
\end{array}
\right) \oplus (0),
$$
respectively.

{\bf Case} $n \ge 3$: \ In a completely similar way, we can establish that, up to interchange of rows and columns $3$ and $4$, $5$ and $6$, and so on, we have
\begin{eqnarray*}
L|_{\mathcal{K}(n)} \!\!\!\! &\cong& 
\!\!\!\! (\alpha _{(0,n)}^{2}) \oplus 
\left(
\begin{array}{cc}
\alpha_{(1,n-1)}^2 & \alpha_{(0,n)}\beta_{(1,n-1)} \\
\alpha_{(0,n)}\beta_{(1,n-1)} & \beta_{(0,n)}^2 
\end{array}
\right) \oplus
\left(
\begin{array}{cc}
\alpha_{(2,n-2)}^2 & \alpha_{(1,n-1)} \beta_{(2,n-2)} \\
\alpha_{(1,n-1)}\beta_{(2,n-2)} & \beta_{(1,n-1)}^2
\end{array}
\right) \\
&&
\!\!\!\!\!\!\!\!\! \oplus \cdots \oplus \!\!
\left(
\begin{array}{cc}
\alpha_{(n-1,1)}^2 & \alpha_{(n-2,2)}\beta_{(n-1,1)} \\
\alpha_{(n-2,2)}\beta_{(n-1,1)} & \beta_{(n-2,2)}^2 
\end{array}
\right) \!\! \oplus \!\!
\left(
\begin{array}{cc}
\alpha_{(n,0)}^2 & \alpha_{(n-1,1)}\beta_{(n,0)} \\
\alpha_{(n-1,1)}\beta_{(n,0)} & \beta_{(n-1,1)}^2 
\end{array}
\right) \!\! \oplus \!\!(\beta_{(n,0)}^2) \\
&=& (\alpha _{(0,n)}^{2}) \oplus \left[\bigoplus_{i=1}^n 
\left(
\begin{array}{cc}
\alpha_{(i,n-i)}^2 & \alpha_{(i-1,n-i+1)}\beta_{(i,n-i)} \\
\alpha_{(i-1,n-i+1)}\beta_{(i,n-i)} & \beta_{(i-1,n-i+1)}^2 
\end{array}
\right)\right] \oplus (\beta _{(n,0)}^{2}).
\end{eqnarray*}

Similarly, 
\begin{eqnarray*}
R|_{\mathcal{K}(n)} \!\!\!\! &\cong& 
\!\!\!\! (0) \oplus \!\!
\left(
\begin{array}{cc}
\alpha_{(0,n-1)}^2 & \alpha_{(0,n-1)}\beta_{(0,n-1)} \\
\alpha_{(0,n-1)}\beta_{(0,n-1)} & \beta_{(0,n-1)}^2 
\end{array}
\right) \!\! \oplus \!\!
\left(
\begin{array}{cc}
\alpha_{(1,n-2)}^2 & \alpha_{(1,n-2)} \beta_{(1,n-2)} \\
\alpha_{(1,n-2)}\beta_{(1,n-2)} & \beta_{(1,n-2)}^2
\end{array}
\right) \!\! \oplus \cdots \\
&&
\!\!\!\!\!\oplus
\left(
\begin{array}{cc}
\alpha_{(n-2,1)}^2 & \alpha_{(n-2,1)}\beta_{(n-2,1)} \\
\alpha_{(n-2,1)}\beta_{(n-2,1)} & \beta_{(n-2,1)}^2 
\end{array}
\right) \oplus
\left(
\begin{array}{cc}
\alpha_{(n-1,0)}^2 & \alpha_{(n-1,0)}\beta_{(n-1,0)} \\
\alpha_{(n-1,0)}\beta_{(n-1,0)} & \beta_{(n-1,0)}^2 
\end{array}
\right)\oplus (0) \\
&=& (0) \oplus \left[\bigoplus_{i=1}^n 
\left(
\begin{array}{cc}
\alpha_{(i-1,n-i)}^2 & \alpha_{(i-1,n-i)}\beta_{(i-1,n-i)} \\
\alpha_{(i-1,n-i)}\beta_{(i-1,n-i)} & \beta_{(i-1,n-i)}^2 
\end{array}
\right) \right]\oplus (0).
\end{eqnarray*}
We have thus obtained (\ref{Decomposition}), which completes the proof of the theorem.
\end{proof}

\subsection{Positivity of $L$}

Recall that 
$$
L=\left(
\begin{array}{ll}
T_{1}^{\ast }T_{1} & T_{2}^{\ast }T_{1} \\
T_{1}^{\ast }T_{2} & T_{2}^{\ast }T_{2}
\end{array}
\right). \label{matrixL}
$$
In the special case when $T_2 = T_1^2$ (this happens when we are testing the $2$--hyponormality of $T_1$), one can rewrite 
$$
L = 
\left(
\begin{array}{ll}
T_{1}^{\ast }T_{1} & T_{1}^{\ast 2} T_{1} \\
T_{1}^{\ast }T_{1}^2 & T_{1}^{\ast 2} T_{1}^2
\end{array}
\right) = 
\left(
\begin{array}{ll}
T_{1}^{\ast } & 0 \\
0 & T_{1}^{\ast }
\end{array}
\right) 
\left(
\begin{array}{ll}
I & T_1^{\ast} \\
T_{1} & T_1^{\ast}T_1
\end{array}
\right) 
\left(
\begin{array}{ll}
T_{1} & 0 \\
0 & T_{1}
\end{array}
\right).  
$$
It follows that, in this case, the hyponormality of $T_1$ readily implies the positivity of $L$. \ The same is not true for arbitrary commuting pairs, even for $2$--variable weighted shifts, as we will see shortly. \ First, notice that, by Smul'jan's Test, the positivity of $L$ is determined by the existence of a contraction $E$ such that
\begin{equation} \label{smul}
T_2^{\ast}T_1 = \sqrt{T_1^{\ast}T_1} E \sqrt{T_2^{\ast}T_2}.
\end{equation}
While it is generally quite hard to obtain a concrete formula for the contraction $E$ from (\ref{smul}), for $2$--variable weighted shifts it is indeed possible to find such a formula. \ First, we can directly calculate the action of $T_2^{\ast}T_1$ on a canonical orthonormal basic vector $e_{\mathbf{k}} \equiv e_{(k_1,k_2)}$:

\begin{equation*}
\begin{tabular}{l}
$T_{2}^{\ast }T_{1}e_{(k_{1},k_{2})}=\sqrt{T_{1}^{\ast }T_{1}} E \sqrt{T_{2}^{\ast }T_{2}}
e_{(k_{1},k_{2})}=\sqrt{T_{1}^{\ast }T_{1}} E\beta
_{(k_{1},k_{2})}e_{(k_{1},k_{2})}$ \vspace{6pt} \\
$=\beta _{(k_{1},k_{2})} \sqrt{T_{1}^{\ast }T_{1}} E e_{(k_{1},k_{2})}.$ \vspace{6pt}
\end{tabular}
\end{equation*}
Since $T_{2}^{\ast }T_{1}e_{(k_{1},k_{2})} = 0$ whenever $k_2=0$, we restrict attention to the case $k_2 \ge 1$. \ Then 
\begin{align*}
T_{2}^{\ast }T_{1}e_{(k_{1},k_{2})} &= \alpha_{(k_{1},k_{2})}T_{2}^{\ast }e_{(k_{1}+1,k_{2})} \\
&= \alpha_{(k_{1},k_{2})}\beta _{(k_{1}+1,k_{2}-1)}e_{(k_{1}+1,k_{2}-1)} \vspace{4pt}
\end{align*}
and it follows that $E$ maps $e_{(k_{1},k_{2})}$ to a multiple of $e_{(k_{1}+1,k_{2}-1)}$,
that is, 
$$
Ee_{(k_{1},k_{2})}=\lambda _{(k_{1},k_{2})}e_{(k_{1}+1,k_{2}-1)}.
$$
Therefore,
\begin{equation*}
\begin{tabular}{l}
$\alpha _{(k_{1},k_{2})}\beta
_{(k_{1}+1,k_{2}-1)}e_{(k_{1}+1,k_{2}-1)}=\alpha _{(k_{1}+1,k_{2}-1)}\beta
_{(k_{1},k_{2})}\lambda _{(k_{1},k_{2})}e_{(k_{1}+1,k_{2}-1)}$ 
\end{tabular}
\end{equation*}
and, as a result, 
$$
\lambda _{(k_{1},k_{2})}=\frac{\alpha _{(k_{1},k_{2})}\beta
_{(k_{1}+1,k_{2}-1)}}{\beta _{(k_{1},k_{2})}\alpha _{(k_{1}+1,k_{2}-1)}}.
$$
Thus, the necessary and sufficient condition for $E$ to be a
contraction is
\begin{equation}
\begin{tabular}{l}
$\alpha _{(k_{1},k_{2})}\beta _{(k_{1}+1,k_{2}-1)}\leq \beta
_{(k_{1},k_{2})}\alpha _{(k_{1}+1,k_{2}-1)}$ 
\end{tabular}
\label{con11}
\end{equation}
for all $(k_{1},k_{2})$ with $k_{2}\geq 1$; or equivalently,
\begin{equation}
\begin{tabular}{l}
$\cbl{\alpha _{(m_{1},m_{2}+1)}\beta _{(m_{1}+1,m_{2})}}\leq \cre{\beta
_{(m_{1},m_{2}+1)}\alpha _{(m_{1}+1,m_{2})}}$ 
\end{tabular}
\label{con12}
\end{equation}
for all $(m_{1},m_{2}) \in \ell^2(\mathbb{Z}_+^2)$. \ We visualize this sequence of inequalities in Figure \ref{Figure 2}.

\setlength{\unitlength}{1mm} \psset{unit=1mm}
\begin{figure}[th]
\begin{center}
\begin{picture}(135,51)

\rput(0,15){
\psline(30,20)(50,20)
\psline(10,40)(30,40)
\psline(10,40)(10,60)
\psline(30,20)(30,40)

\put(30,20){\pscircle*(0,0){1}}
\put(50,20){\pscircle*(0,0){1}} \put(10,40){\pscircle*(0,0){1}}
\put(30,40){\pscircle*(0,0){1}} \put(10,60){\pscircle*(0,0){1}}

\put(22,16){\footnotesize{$\mathbf{k}+\mathbf{\varepsilon}_{1}-\mathbf{\varepsilon}_{2}$}}
\put(43,16){\footnotesize{$\mathbf{k}+2\mathbf{\varepsilon}_{1}-\mathbf{\varepsilon}_{2}$}}

\put(34,22){\footnotesize{$\alpha_{\mathbf{k}+\mathbf{\varepsilon}_{1}-\mathbf{\varepsilon}_{2}}$}}

\put(18,42){\footnotesize{$\alpha_{\mathbf{k}}$}}

\put(4,39){\footnotesize{$\mathbf{k}$}}
\put(-1,59){\footnotesize{$\mathbf{k}+\mathbf{\varepsilon}_{2}$}}
\put(34,39){\footnotesize{$\mathbf{k}+\mathbf{\varepsilon}_{1}$}}

\put(10,49){\footnotesize{$\beta_{\mathbf{k}}$}}

\put(30,29){\footnotesize{$\beta_{\mathbf{k}+\mathbf{\varepsilon}_{1}-\mathbf{\varepsilon}_{2}}$}}



\rput(60,30){

\psline(30,20)(50,20)
\psline(10,40)(30,40)
\psline(10,40)(10,60)
\psline(30,20)(30,40)

\put(30,20){\pscircle*(0,0){1}}
\put(50,20){\pscircle*(0,0){1}} \put(10,40){\pscircle*(0,0){1}}
\put(30,40){\pscircle*(0,0){1}} \put(10,60){\pscircle*(0,0){1}}

\put(24,16){\footnotesize{$\mathbf{m}+\mathbf{\varepsilon}_{1}$}}
\put(41,16){\footnotesize{$\mathbf{m}+2\cdot\mathbf{\varepsilon}_{1}$}}

\put(36,22){\footnotesize{\cre{$\alpha_{\mathbf{m}+\mathbf{\varepsilon}_{1}}$}}}

\put(16,42){\footnotesize{\cbl{$\alpha_{\mathbf{m}+\mathbf{\varepsilon}_{2}}$}}}

\put(-3,39){\footnotesize{$\mathbf{m}+\mathbf{\varepsilon}_{2}$}}
\put(-6,59){\footnotesize{$\mathbf{m}+2 \cdot \mathbf{\varepsilon}_{2}$}}
\put(34,39){\footnotesize{$\mathbf{m}+\mathbf{\varepsilon}_{1}+\mathbf{\varepsilon}_{2}$}}

\put(10,49){\footnotesize{\cre{$\beta_{\mathbf{m}+\mathbf{\varepsilon}_{2}}$}}}

\put(30,29){\footnotesize{\cbl{$\beta_{\mathbf{m}+\mathbf{\varepsilon}_{1}}$}}}

\rput(3,35){
\put(50,60){\footnotesize{$\mathbf{\varepsilon}_1 = (1,0)$}}
\put(50,55){\footnotesize{$\mathbf{\varepsilon}_2 = (0,1)$}}
\put(50,50){\footnotesize{{\color{brown}{$\mathbf{m}:=\mathbf{k}-\mathbf{\varepsilon_2}$}}}}
\put(50,45){\footnotesize{{\textrm{(change of variable)}}}}

\psline(48,64)(80,64)
\psline(48,42.5)(80,42.5)
\psline(48,42.5)(48,64)
\psline(80,42.5)(80,64)
}
}
}
\end{picture}
\end{center}
\caption{Positivity of $L$.}
\label{Figure 2}
\end{figure}

\noindent At the level of moments, we have
\begin{equation*}
\begin{tabular}{l}
$\alpha _{(k_{1},k_{2})}\beta _{(k_{1}+1,k_{2}-1)}\leq \beta
_{(k_{1},k_{2})}\alpha _{(k_{1}+1,k_{2}-1)}$ \\
$\Longleftrightarrow \frac{\gamma _{(k_{1}+1,k_{2})}}{\gamma _{(k_{1},k_{2})}}%
\cdot \frac{\gamma _{(k_{1}+1,k_{2})}}{\gamma _{(k_{1}+1,k_{2}-1)}}\leq
\frac{\gamma _{(k_{1},k_{2}+1)}}{\gamma _{(k_{1},k_{2})}}\cdot \frac{\gamma
_{(k_{1}+2,k_{2}-1)}}{\gamma _{(k_{1}+1,k_{2}-1)}}$ \\
$\Longleftrightarrow \gamma _{(k_{1}+1,k_{2})}^{2}\leq \gamma
_{(k_{1},k_{2}+1)}\cdot \gamma _{(k_{1}+2,k_{2}-1)}$ \\
$\Longleftrightarrow \gamma _{(k_{1}+1,k_{2}+1)}^{2}\leq \gamma
_{(k_{1},k_{2}+2)}\cdot \gamma _{(k_{1}+2,k_{2})}$ \quad \quad ($k_{1},k_{2} \ge 0$).
\end{tabular}
\end{equation*}
We thus have, for all $\mathbf{k}\in \mathbb{Z}_{+}^{2}$, 
\begin{equation}
L\geq 0\Longleftrightarrow \gamma _{\mathbf{k}+\varepsilon_1+\varepsilon_2}^{2}\leq \gamma
_{\mathbf{k}+2\varepsilon_2}\cdot \gamma _{\mathbf{k}+2\varepsilon_1}.  \label{con3}
\end{equation}

Summarizing the preceding observations, we are ready to state the following result. 

\begin{proposition}
Let $W_{(\alpha,\beta)}$ be a $2$--variable weighted shift, and let $L$ be the associated $2 \times 2$ operator matrix in (\ref{matrixL}). \ Then 
$$
L \ge 0 \Longleftrightarrow \alpha_{\mathbf{k}+\mathbf{\varepsilon}_2} \cdot \beta_{\mathbf{k}+\mathbf{\varepsilon}_1} \le \alpha_{\mathbf{k}+\mathbf{\varepsilon}_1} \cdot \beta_{\mathbf{k}+\mathbf{\varepsilon}_2} \Longleftrightarrow
\gamma_{\mathbf{k}+\mathbf{\varepsilon}_1+\mathbf{\varepsilon}_2}^2 \le \gamma_{\mathbf{k}+2\mathbf{\varepsilon}_1} \cdot \gamma_{\mathbf{k}+2\mathbf{\varepsilon}_2}, 
$$
for all $\mathbf{k} \in \mathbb{Z}_+^2$.
\end{proposition}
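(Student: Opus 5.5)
The plan is to reduce positivity of $L$ to positivity of small scalar blocks via Theorem \ref{Thm6}, rather than through the contraction $E$ of Smul'jan's Lemma. That sketch used $T_2^*T_2$ and $T_1^*T_1$ as the diagonal entries in the wrong arrangement, and it needed invertibility-type division, so a direct block computation is cleaner. By Theorem \ref{Thm6}, $L$ is reduced by each $\mathcal{K}(n)$. Hence $L\ge 0$ if and only if $L|_{\mathcal{K}(n)}\ge 0$ for every $n$, and this holds if and only if every direct summand in (\ref{Decomposition}) is positive.

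The $1\times 1$ summands $\alpha_{(0,n)}^2$ and $\beta_{(n,0)}^2$ are automatically positive. Each $2\times 2$ summand
$$
\left(\begin{array}{cc}\alpha_{(i,n-i)}^2 & \alpha_{(i-1,n-i+1)}\beta_{(i,n-i)}\\ \alpha_{(i-1,n-i+1)}\beta_{(i,n-i)} & \beta_{(i-1,n-i+1)}^2\end{array}\right)
$$
has positive diagonal, since the weights are positive. It is therefore positive if and only if its determinant is nonnegative, that is,
$$
\alpha_{(i-1,n-i+1)}\beta_{(i,n-i)}\le \alpha_{(i,n-i)}\beta_{(i-1,n-i+1)}.
$$
Next I re-index with $\mathbf{k}:=(i-1,n-i)$, so that $(i-1,n-i+1)=\mathbf{k}+\varepsilon_2$ and $(i,n-i)=\mathbf{k}+\varepsilon_1$. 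As $n\ge1$ and $1\le i\le n$ vary, $\mathbf{k}$ runs over all of $\mathbb{Z}_+^2$ exactly once. This yields the first equivalence: $\alpha_{\mathbf{k}+\varepsilon_2}\beta_{\mathbf{k}+\varepsilon_1}\le\alpha_{\mathbf{k}+\varepsilon_1}\beta_{\mathbf{k}+\varepsilon_2}$ for all $\mathbf{k}$. This is (\ref{con12}).

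For the moment form, I square both sides, which preserves the inequality since everything is positive. I then express each weight as a ratio of moments, using (\ref{moment1}) and path independence from (\ref{commuting}):
$$
\alpha_{\mathbf{j}}^2=\gamma_{\mathbf{j}+\varepsilon_1}/\gamma_{\mathbf{j}},\qquad \beta_{\mathbf{j}}^2=\gamma_{\mathbf{j}+\varepsilon_2}/\gamma_{\mathbf{j}}.
$$
The left side becomes $\gamma_{\mathbf{k}+\varepsilon_1+\varepsilon_2}^2/(\gamma_{\mathbf{k}+\varepsilon_2}\gamma_{\mathbf{k}+\varepsilon_1})$. The right side becomes $\gamma_{\mathbf{k}+2\varepsilon_1}\gamma_{\mathbf{k}+2\varepsilon_2}/(\gamma_{\mathbf{k}+\varepsilon_1}\gamma_{\mathbf{k}+\varepsilon_2})$. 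Cancelling the common positive denominator gives $\gamma_{\mathbf{k}+\varepsilon_1+\varepsilon_2}^2\le\gamma_{\mathbf{k}+2\varepsilon_1}\gamma_{\mathbf{k}+2\varepsilon_2}$.

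The main care point is the index bookkeeping: I must check that the blocks of Theorem \ref{Thm6} exhaust all $\mathbf{k}\in\mathbb{Z}_+^2$. I also need the moment substitution to be legitimate for $\gamma_{\mathbf{k}+\varepsilon_1+\varepsilon_2}$. This is where commutativity enters, since it lets that moment be computed along either path.
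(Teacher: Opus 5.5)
Your proof is correct, but it reaches the first equivalence by a different route from the paper's.

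The paper applies Smul'jan's Lemma directly to $L$, with $A=T_1^*T_1$, $X=T_2^*T_1$, $B=T_2^*T_2$. It shows that the contraction must be $Ee_{\mathbf{k}}=\lambda_{\mathbf{k}}e_{\mathbf{k}+\varepsilon_1-\varepsilon_2}$, where $\lambda_{\mathbf{k}}=\alpha_{\mathbf{k}}\beta_{\mathbf{k}+\varepsilon_1-\varepsilon_2}/(\beta_{\mathbf{k}}\alpha_{\mathbf{k}+\varepsilon_1-\varepsilon_2})$, and then reads off (\ref{con11}) from $\sup_{\mathbf{k}}|\lambda_{\mathbf{k}}|\le 1$. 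You instead split $L$ along the reducing subspaces of Theorem \ref{Thm6} and apply the determinant test to each $2\times 2$ block.

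The two computations encode the same inequality. Indeed, $\lambda_{\mathbf{k}+\varepsilon_2}$ is exactly the scalar Smul'jan factor of your block indexed by $\mathbf{k}$, so the paper's $E$ is just the direct sum of your blockwise contractions.

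What your version buys is that everything happens in finite dimensions. You never need two facts the paper passes over quickly:
\begin{itemize}
\item $E$ is uniquely determined, because $\sqrt{T_1^*T_1}$ is injective and $\sqrt{T_2^*T_2}$ has dense range;
\item $\|E\|=\sup_{\mathbf{k}}|\lambda_{\mathbf{k}}|$, because $E$ sends distinct basis vectors to multiples of distinct basis vectors.
\end{itemize}
Your version also works with the same blocks the paper later uses for semi-hyponormality. The paper's version, in exchange, does not depend on Theorem \ref{Thm6} and exhibits the Smul'jan contraction explicitly.

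Your passage to moments is the same as the paper's; the paper derives $\gamma_{(k_1+1,k_2)}^2\le\gamma_{(k_1,k_2+1)}\gamma_{(k_1+2,k_2-1)}$ and then shifts $k_2$. You correctly identify commutativity as what justifies $\alpha_{\mathbf{j}}^2=\gamma_{\mathbf{j}+\varepsilon_1}/\gamma_{\mathbf{j}}$ and $\beta_{\mathbf{j}}^2=\gamma_{\mathbf{j}+\varepsilon_2}/\gamma_{\mathbf{j}}$.

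If your opening remark is aimed at the Smul'jan argument the paper gives just before the proposition, it is inaccurate. In that argument, (\ref{smul}) is precisely $X=\sqrt{A}\,E\,\sqrt{B}$ for $L$, and the only divisions are by weights, which are positive. That argument is therefore valid and yields exactly your inequality (\ref{con12}). The block argument is worth preferring for its transparency, not because the other one fails.
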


Now observe that, by Lemma \ref{khypo}, 
$$
W_{(\alpha ,\beta )} \textrm{ is hyponormal } \Longleftrightarrow \left(
\begin{array}{ccc}
\gamma_{\mathbf{k}} & \gamma _{\mathbf{k+\varepsilon_1}} & \gamma
_{\mathbf{k+\varepsilon_2}} \\
\gamma _{\mathbf{k+\varepsilon_1}} & \gamma _{\mathbf{k}+2\varepsilon_1} & \gamma
_{\mathbf{k+\varepsilon_1+\varepsilon_2}} \\
\gamma _{\mathbf{k+\varepsilon_2}} & \gamma _{\mathbf{k+\varepsilon_1+\varepsilon_2}} & \gamma
_{\mathbf{k}+2\varepsilon_2}
\end{array}
\right) \geq 0.
$$
Focusing on the lower right $2 \times 2$ minor, which must be nonnegative, yields the inequality in (\ref{con3}). \ It follows that the hyponormality of $W_{(\alpha,\beta)}$ readily implies (\ref{con3}), as expected. \ On the other hand, we will now exhibit a nontrivial, but rather simple, $2$--variable weighted shift $W_{(\alpha,\beta)} \equiv (T_1,T_2)$ such that $T_i$ is hyponormal ($i=1,2$), but $L$ is not positive (cf. Example \ref{Ex1}).

In $\ell^2(\mathbb{Z}_+^2)$, we let $\mathcal{M}_{i}$ $\left( \text{resp. }\mathcal{N}_{j}\right ) $ be the subspace of $\ell ^{2}(\mathbb{Z}_{+}^{2})$ spanned by the canonical orthonormal basis associated to indices $\mathbf{k}=(k_{1},k_{2})$ with $k_{1}\geq 0$ and $k_{2}\geq i$ (resp. $k_{1}\geq j$ and $k_{2}\geq 0$). \ We will write $\mathcal{M}_{1}$ simply as $\mathcal{M}$ and $\mathcal{N}_{1}$ as $\mathcal{N}$. \ Given a $2$--variable weighted shift $W_{\left( \alpha ,\beta \right) }$, we let $W_{\left( \alpha ,\beta \right) }|_{\mathcal{M}\cap \mathcal{N}}$ denote the restriction of $W_{(\alpha ,\beta )}$ to the invariant subspace $\mathcal{M}\cap \mathcal{N}$. \ Now we consider:

\begin{example} \label{ex215}
\label{Ex1} (i) \ Let $0<a,b<1$ with $a<b$. \ Consider a $2$--variable weighted shift $W_{\left( \alpha ,\beta \right) }\equiv(T_{1},T_{2})$ whose diagram is given in Figure 4(i); that is, 
\newline (i) \ $W_{\left( \alpha ,\beta \right)}|_{\mathcal{M}_{2}\cap \mathcal{N}_{2}}$ is the Helton-Howe shift;
\newline (ii) \ $W_{\left( \alpha ,\beta \right) }|_{\mathcal{M}_{2}}\cong (I\otimes \mathrm{shift}(b^2,b^2,1,1,\ldots ),U_{+}\otimes I)$;  
\newline (iii) \ $W_{\left( \alpha ,\beta\right) }|_{\mathcal{N}_{2}}\cong (I\otimes U_{+},\mathrm{shift}(b^2,b^2,1,1,\ldots )\otimes I)$; \ 
\newline (iv) \ $\alpha_{(0,0)}:=a^2$, $\alpha_{(1,0)}:=a^2$, $\alpha_{(0,1)}:=ab$, $\alpha_{(1,1)}:=ab$; 
\newline (v) \ $\beta_{(0,0)}:=a^2$, $\beta_{(0,1)}:=a^2$, $\beta_{(1,0)}:=ab$, $\beta_{(1,1)}:=ab$; and
\newline (vi) \ the remaining weights determined by the commutativity of $T_1$ and $T_2$.

\noindent It is straightforward to verify that $T_{1}$ and $T_{2}$ are both hyponormal. \ We will now show that $L$ is not positive. \ To do this, we use (\ref{con11}), as follows: at the lattice point $(k_1,k_2)=(0,1)$, we have $\alpha_{(0,1)}\beta_{(1,0)}=a^2b^2$ and $\beta_{(0,1)}\alpha_{(1,0)}=a^4$; since $a<b$, we see that $a^2b^2 > a^4$, so (\ref{con11}) does not hold. \ (At all other lattice points, the inequality in (\ref{con11}) does hold.) \qed
\end{example}

In the next example, we show that, even for $2$--variable weighted shifts, the semi-hyponormality of a commuting pair $(T_1,T_2)$ does not guarantee the hyponormality of both components $T_1$ and $T_2$. \ First, recall that the positivity of $L$ and $R$ can be detected by their restrictions to the reducing subspaces $\mathcal{K}(n) \; (n \ge 0)$, as shown in Theorem \ref{Thm6}.

\begin{example} \label{ex216}
Consider the $2$-variable weighted shift $W_{(\alpha,\beta)}$ whose weight diagram is shown in Figure 4(ii), where the parameters $a$ and $b$ are positive numbers, and $a > \dfrac{1}{2}$. 

\setlength{\unitlength}{1mm} \psset{unit=1mm}
\begin{figure}[th]
\begin{center}
\begin{picture}(135,70)

\rput(-70,35){
\psline{->}(90,14)(105,14)
\put(95,10){$\rm{T}_1$}
\psline{->}(66,35)(66,50)
\put(61,42){$\rm{T}_2$}

\psline{->}(75,20)(130,20)
\psline(75,35)(127,35)
\psline(75,50)(127,50)
\psline(75,65)(127,65)

\psline{->}(75,20)(75,72)
\psline(90,20)(90,71)
\psline(105,20)(105,71)
\psline(120,20)(120,71)

\put(71,16){\footnotesize{$(0,0)$}}
\put(67,34){\footnotesize{$(0,1)$}}
\put(67,49){\footnotesize{$(0,2)$}}
\put(67,64){\footnotesize{$(0,3)$}}

\put(86,16){\footnotesize{$(1,0)$}}
\put(101,16){\footnotesize{$(2,0)$}}
\put(116,16){\footnotesize{$(3,0)$}}

\put(82,21){\footnotesize{$a^2$}}
\put(97,21){\footnotesize{$a^2$}}
\put(112,21){\footnotesize{$1$}}
\put(125,21){\footnotesize{$1$}}

\put(82,36){\footnotesize{$ab$}}
\put(97,36){\footnotesize{$ab$}}
\put(112,36){\footnotesize{$1$}}
\put(125,36){\footnotesize{$1$}}

\put(82,51){\footnotesize{$b^2$}}
\put(97,51){\footnotesize{$b^2$}}
\put(112,51){\footnotesize{$1$}}
\put(125,51){\footnotesize{$1$}}

\put(82,66){\footnotesize{$b^2$}}
\put(97,66){\footnotesize{$b^2$}}
\put(112,66){\footnotesize{$1$}}
\put(125,66){\footnotesize{$1$}}

\put(76,26){\footnotesize{$a^2$}}
\put(76,41){\footnotesize{$a^2$}}
\put(76,56){\footnotesize{$1$}}
\put(76,69){\footnotesize{$1$}}

\put(91,26){\footnotesize{$ab$}}
\put(91,41){\footnotesize{$ab$}}
\put(91,56){\footnotesize{$1$}}
\put(91,69){\footnotesize{$1$}}

\put(106,26){\footnotesize{$b^2$}}
\put(106,41){\footnotesize{$b^2$}}
\put(106,56){\footnotesize{$1$}}
\put(106,69){\footnotesize{$1$}}

\put(121,26){\footnotesize{$b^2$}}
\put(121,41){\footnotesize{$b^2$}}
\put(121,56){\footnotesize{$1$}}
\put(121,69){\footnotesize{$1$}}

}


%
\put(0,5){(i)}


\put(80,5){(ii)}

\rput(4,35){
\psline{->}(95,14)(110,14)
\put(102,10){$\rm{T}_1$}
\psline{->}(70,35)(70,50)
\put(65,42){$\rm{T}_2$}

\psline{->}(80,20)(130,20)
\psline(80,35)(128,35)
\psline(80,50)(128,50)
\psline(80,65)(128,65)

\psline{->}(80,20)(80,70)
\psline(95,20)(95,68)
\psline(110,20)(110,68)
\psline(125,20)(125,68)

\put(75,16){\footnotesize{$(0,0)$}}
\put(91,16){\footnotesize{$(1,0)$}}
\put(106,16){\footnotesize{$(2,0)$}}
\put(121,16){\footnotesize{$(3,0)$}}

\put(72,34){\footnotesize{$(0,1)$}}
\put(72,49){\footnotesize{$(0,2)$}}
\put(72,64){\footnotesize{$(0,3)$}}

\put(85,21){\footnotesize{$a$}}
\put(100,21){\footnotesize{$1$}}
\put(115,21){\footnotesize{$1$}}
\put(126,21){\footnotesize{$\cdots$}}

\put(85,36){\footnotesize{$1$}}
\put(100,36){\footnotesize{$1$}}
\put(115,36){\footnotesize{$1$}}
\put(126,36){\footnotesize{$\cdots$}}

\put(85,51){\footnotesize{$1$}}
\put(100,51){\footnotesize{$1$}}
\put(115,51){\footnotesize{$1$}}
\put(126,51){\footnotesize{$\cdots$}}

\put(85,66){\footnotesize{$\cdots$}}
\put(100,66){\footnotesize{$\cdots$}}
\put(115,66){\footnotesize{$\cdots$}}
\put(126,66){\footnotesize{$\cdots$}}

\put(81,26){\footnotesize{$b$}}
\put(81,41){\footnotesize{$2b$}}
\put(81,56){\footnotesize{$2b$}}
\put(81,66){\footnotesize{$\vdots$}}

\put(96,26){\footnotesize{$\dfrac{b}{a}$}}
\put(96,41){\footnotesize{$2b$}}
\put(96,56){\footnotesize{$2b$}}
\put(96,66){\footnotesize{$\vdots$}}

\put(111,26){\footnotesize{$\dfrac{b}{a}$}}
\put(111,41){\footnotesize{$2b$}}
\put(111,56){\footnotesize{$2b$}}
\put(111,66){\footnotesize{$\vdots$}}

}



\end{picture}
\caption{(i) Weight diagram for Example \ref{ex215}; (ii) Weight diagram for Example \ref{ex216}.}
\end{center} \label{Fig 10}
\end{figure}

Observe first that $T_2$ is subnormal, and that 
$$
W_{\left(\alpha ,\beta \right) }|_{\mathcal{M}}\cong (I\otimes U_{+},\,2b \cdot U_+\otimes I)
$$
and
$$
W_{\left( \alpha ,\beta \right) }|_{\mathcal{N}}\cong (I\otimes U_{+},\, \shift(\dfrac{b}{a},2b,2b,\ldots) \otimes I)
$$
are both subnormal (and hence hyponormal). \ Thus, the inequalities $L|_{\mathcal{K}(n)} \ge R|_{\mathcal{K}(n)}$ for $n \ge 2$, are guaranteed. \ Therefore, to determine the semi-hyponormality of $W_{(\alpha,\beta)}$, it suffices to verify that $L|_{\mathcal{K}(n)}\geq 0$ and $\sqrt{L|_{\mathcal{K}(n)}} \geq \sqrt{R|_{\mathcal{K}(n)}}$, for $n=0,1$. \ The case $n=0$ is straightforward, so we focus on $n=1$. \ From (\ref{Decomposition}) we obtain
$$
L|_{\mathcal{K}(1)}\geq 0 \Longleftrightarrow \left(
\begin{array}{cc}
1 & \frac{b}{a} \\
\frac{b}{a} & 4b^{2}%
\end{array}%
\right) \geq 0
$$
and 
$$
R|_{\mathcal{K}(1)}\Longleftrightarrow \left(
\begin{array}{cc}
a^{2} & ab \\
ab & b^2
\end{array}%
\right) \geq 0.
$$
\medskip
By (\ref{squareroot2}), to compute the square root of $L|_{\mathcal{K}(1)}$ we first need to calculate its determinant and trace; that is,  
$$
\det (L|_{\mathcal{K}(1)}) = 4b^2-\dfrac{b^2}{a^2} = \dfrac{b^2}{a^2} \cdot (4a^2-1)>0, \quad \textrm{ and } \tr (L|_{\mathcal{K}(1)}) = 1+4b^2>0.
$$
As we know from (\ref{squareroot2}), 
$$
\sqrt{L|_{\mathcal{K}(1)}} = \dfrac{1}{\sqrt{\tr (L|_{\mathcal{K}(1)})+2 \sqrt{\det (L|_{\mathcal{K}(1)})}}} \cdot (L|_{\mathcal{K}(1)} + \sqrt{\det (L|_{\mathcal{K}(1)}} \cdot I),
$$
that is,
$$ 
\sqrt{L|_{\mathcal{K}(1)}}=\dfrac{1}{\sqrt{1+4b^2+2 \cdot \frac{b}{a} \cdot \sqrt{4a^2-1}}} \cdot \left(
\begin{array}{cc}
1 + \frac{b}{a} \cdot \sqrt{4a^2-1} & \frac{b}{a} \\
& \\
\frac{b}{a} & 4b^2+\frac{b}{a} \cdot \sqrt{4a^2-1}
\end{array}
\right).
$$
Let $p:=\sqrt{4a^2-1}$ and $q:=\sqrt{1+4b^2+2 \cdot \frac{b}{a} \cdot p}$, and observe that both $p$ and $q$ are positive numbers, in view of the assumption $a > \frac{1}{2}$. \ It follows that  
\begin{equation*}
\begin{tabular}{l}
$\sqrt{L|_{\mathcal{K}(1)}} = \left(
\begin{array}{cc}
1 & \frac{b}{a} \\
& \\
\frac{b}{a} & 4b^{2}
\end{array}%
\right) ^{\frac{1}{2}}=\frac{1}{q}\left(
\begin{array}{cc}
1+\frac{b}{a} \cdot p & \frac{b}{a} \\
& \\
\frac{b}{a} & 4b^{2}+\frac{b}{a} \cdot p
\end{array}
\right)$.
\end{tabular}
\end{equation*}
Similarly, using (\ref{squareroot2}) we obtain
\begin{equation*}
\begin{tabular}{l}
$\sqrt{R|_{\mathcal{K}(1)}}=\left(
\begin{array}{cc}
a^{2} & ab \\
ab & b^2
\end{array}
\right) ^{\frac{1}{2}}=\frac{1}{\sqrt{a^2+b^2}}\left(
\begin{array}{cc}
a^2 & ab \\
ab & b^2
\end{array}
\right) $.
\end{tabular}
\end{equation*}
As we mentioned before, the semihyponormality of $W_{(\alpha,\beta)}$ is entirely determined by the inequality $\operatorname{SH}(a,b):=\sqrt{L|_{\mathcal{K}(1)}} - \sqrt{R|_{\mathcal{K}(1)}} \ge 0$. \ Now, 
\begin{equation*}
\operatorname{SH}(a,b)=\left(
\begin{array}{cc}
\frac{1}{q} \cdot (1+\frac{b}{a} \cdot p) - \frac{a^2}{\sqrt{a^2+b^2}} & \frac{1}{q} \cdot \frac{b}{a} - \frac{ab}{\sqrt{a^2+b^2}} \\
& \\
\frac{1}{q} \cdot \frac{b}{a} - \frac{ab}{\sqrt{a^2+b^2}} & \frac{1}{q} \cdot (4b^2+\frac{b}{a} \cdot p) - \frac{b^2}{\sqrt{a^2+b^2}}
\end{array}
\right).
\end{equation*}
The matrix-valued function $\operatorname{SH}:(0,+\infty)^2\longrightarrow M_2(\mathbb{C})$ (where $M_2(\mathbb{C})$ denotes the algebra of $2 \times 2$ complex matrices over the complex numbers), is clearly continuous in a neighborhood of $(1,1)$; moreover, at $(1,1)$, we have $p=\sqrt{3}$ and $q=\sqrt{5+2\sqrt{3}}$. \ Therefore, 
$$
\operatorname{SH}(1,1)=\left(
\begin{array}{cc}
\frac{1+p}{q} - \frac{1}{\sqrt{2}} & \frac{1}{q} - \frac{1}{\sqrt{2}} \\
& \\
\frac{1}{q} - \frac{1}{\sqrt{2}} & \frac{4+p}{q} - \frac{1}{\sqrt{2}}
\end{array}
\right) = \left(
\begin{array}{cc}
\frac{1+\sqrt{3}}{\sqrt{5+2\sqrt{3}}} - \frac{1}{\sqrt{2}} & \frac{1}{\sqrt{5+2\sqrt{3}}} - \frac{1}{\sqrt{2}} \\
& \\
\frac{1}{\sqrt{5+2\sqrt{3}}} - \frac{1}{\sqrt{2}} & \frac{4+\sqrt{3}}{\sqrt{5+2\sqrt{3}}} - \frac{1}{\sqrt{2}}
\end{array} \right).
$$
A simple calculation using {\it Mathematica} \cite{Wol} shows that both $\tr(\operatorname{SH}(1,1))$ and $\det(\operatorname{SH}(1,1)$ are positive at $(1,1)$. \ This implies that $\operatorname{SH}(1,1)$ is a positive and invertible matrix. \ By continuity, the same must be true in an open neighborhood $\mathcal{G}$ of $(1,1)$, so $W_{(\alpha,\beta)}$ remains semi-hyponormal for all values of $a$ and $b$ such that $(a,b) \in \mathcal{G}$. \ However, as soon as $a$ is bigger than $1$, the hyponormality of $T_1$ is lost, and therefore $W_{(\alpha,\beta)}$ cannot be hyponormal. \ In short, it is possible to secure values for $a$ and $b$ such that $W_{(\alpha,\beta)}$ is semi-hyponormal but not hyponormal; for a concrete example, take $a=b=\frac{21}{20}$; in this case, $\tr (\operatorname{SH}(1,1)) \cong 1.77469$ and $\det (\operatorname{SH}(1,1)) \cong 0.44924$. \ Therefore, $\operatorname{SH}(1,1) \ge 0$, but $T_1$ is not hyponormal.
\end{example}

\begin{remark} An immediate consequence of Example \ref{ex216} is that the (joint) semi-hyponormality of $(T_1,T_2)$ does not in general imply the semi-hyponormality of $T_1$ and $T_2$, since for unilateral weighted shifts semi-hyponormality is equivalent to hyponormality.
\end{remark}


\subsection{Commutativity of the entries of $L$}

As we have seen in Subsection \ref{commL}, the concrete calculation of the square root of $L$ depends on having the entries of $L$ commute. \ We will now obtain necessary and sufficient conditions for this to happen. 

\begin{lemma} \label{lem100} \ Let $W_{(\alpha,\beta)} \equiv (T_1,T_2)$ be a commuting $2$--variable weighted shift, and recall that 
$L = (\{T_j^*T_i\}_{i,j=1}^2$. \ Then
\newline (i) $T_1^*T_1$ commutes with $T_2^*T_1$ if and only if $\alpha_{\mathbf{k}+\mathbf{\varepsilon}_1} = \alpha_{\mathbf{k}+\mathbf{\varepsilon}_2}$, for all $\mathbf{k} \in \mathbb{Z}_+^2$;
\newline (ii) $T_2^*T_2$ commutes with $T_1^*T_2$ if and only if $\beta_{\mathbf{k}+\mathbf{\varepsilon}_1} = \beta_{\mathbf{k}+\mathbf{\varepsilon}_2}$, for all $\mathbf{k} \in \mathbb{Z}_+^2$;
\newline (iii) $T_1^*T_1$ always commutes with $T_2^*T_2$.
\end{lemma}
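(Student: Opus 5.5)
Since all operators involved are weighted-shift-type operators, the plan is to compute both products $XY$ and $YX$ on each canonical basis vector $e_{\mathbf{k}}$ and compare coefficients. Both $T_1^*T_1$ and $T_2^*T_2$ are diagonal: $T_1^*T_1 e_{\mathbf{k}} = \alpha_{\mathbf{k}}^2 e_{\mathbf{k}}$ and $T_2^*T_2 e_{\mathbf{k}} = \beta_{\mathbf{k}}^2 e_{\mathbf{k}}$. Part (iii) is then immediate, because diagonal operators with respect to the same orthonormal basis commute.

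For (i), recall the computation preceding (\ref{con11}): $T_2^*T_1 e_{\mathbf{k}} = 0$ if $k_2=0$, and
$$T_2^*T_1 e_{\mathbf{k}} = \alpha_{\mathbf{k}}\beta_{\mathbf{k}+\varepsilon_1-\varepsilon_2}\, e_{\mathbf{k}+\varepsilon_1-\varepsilon_2}$$
if $k_2 \ge 1$. We then compute, for $k_2\ge 1$,
$$T_1^*T_1 T_2^*T_1 e_{\mathbf{k}} = \alpha_{\mathbf{k}+\varepsilon_1-\varepsilon_2}^2\,\alpha_{\mathbf{k}}\beta_{\mathbf{k}+\varepsilon_1-\varepsilon_2}\, e_{\mathbf{k}+\varepsilon_1-\varepsilon_2}$$
and
$$T_2^*T_1 T_1^*T_1 e_{\mathbf{k}} = \alpha_{\mathbf{k}}^2\,\alpha_{\mathbf{k}}\beta_{\mathbf{k}+\varepsilon_1-\varepsilon_2}\, e_{\mathbf{k}+\varepsilon_1-\varepsilon_2}.$$
For $k_2=0$ both sides vanish. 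Since all weights are positive, commutativity is equivalent to $\alpha_{\mathbf{k}+\varepsilon_1-\varepsilon_2}^2=\alpha_{\mathbf{k}}^2$ for all $\mathbf{k}$ with $k_2\ge1$. Substituting $\mathbf{m}:=\mathbf{k}-\varepsilon_2$ (the same change of variable as in Figure \ref{Figure 2}) turns this into $\alpha_{\mathbf{m}+\varepsilon_1}=\alpha_{\mathbf{m}+\varepsilon_2}$ for all $\mathbf{m}\in\mathbb{Z}_+^2$, as claimed.

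Part (ii) is handled symmetrically, using $T_1^*T_2 = (T_2^*T_1)^*$. For $k_1\ge1$,
$$T_1^*T_2 e_{\mathbf{k}} = \beta_{\mathbf{k}}\alpha_{\mathbf{k}-\varepsilon_1+\varepsilon_2}\, e_{\mathbf{k}-\varepsilon_1+\varepsilon_2},$$
and it vanishes if $k_1=0$. Comparing $T_2^*T_2T_1^*T_2 e_{\mathbf{k}}$ with $T_1^*T_2T_2^*T_2 e_{\mathbf{k}}$ gives the condition $\beta_{\mathbf{k}-\varepsilon_1+\varepsilon_2}^2=\beta_{\mathbf{k}}^2$ for $k_1\ge1$. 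With $\mathbf{m}:=\mathbf{k}-\varepsilon_1$ this becomes $\beta_{\mathbf{m}+\varepsilon_2}=\beta_{\mathbf{m}+\varepsilon_1}$.

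There is no real obstacle here. The only points needing care are the index shift, and checking that the boundary cases ($k_2=0$ in (i), $k_1=0$ in (ii)) impose no extra condition, since both sides vanish there. The commutativity relation (\ref{commuting}) is not actually needed.
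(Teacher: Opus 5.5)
Your proposal is correct and follows essentially the same route as the paper: compute both products on each basis vector $e_{\mathbf{k}}$, note that both vanish in the boundary case, and use positivity of the weights to reduce commutativity to $\alpha_{\mathbf{k}+\varepsilon_1-\varepsilon_2}=\alpha_{\mathbf{k}}$ (resp.\ the analogous identity for $\beta$), with (iii) following because both operators are diagonal. Your explicit index shift $\mathbf{m}=\mathbf{k}-\varepsilon_2$ (resp.\ $\mathbf{m}=\mathbf{k}-\varepsilon_1$) and your boundary-case check are, if anything, more carefully written than the paper's.
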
 

\begin{proof}
(i) \ Since $T_2^* e_{\mathbf{k}}$ whenever $k_2=0$, it is enough to restrict attention to the action of the commutator $[T_1^*T_1,T_2^*T_1]$ on vectors $e_\mathbf{k}$ with $k_2 \ge 1$. \ Assume $k_2 \ge 1$; then 
\begin{eqnarray*}
[T_2^*T_1,T_1^*T_1]e_{\mathbf{k}} &=& \left(\beta_{\mathbf{k}+\mathbf{\varepsilon}_1-\mathbf{\varepsilon}_2}\alpha_{\mathbf{k}}^3 - \alpha_{\mathbf{k}+\mathbf{\varepsilon}_1-\mathbf{\varepsilon}_2}^2\beta_{\mathbf{k}+\mathbf{\varepsilon}_1-\mathbf{\varepsilon}_2}\alpha_{\mathbf{k}}\right)e_{\mathbf{k}+\mathbf{\varepsilon}_1-\mathbf{\varepsilon}_2} \\ 
&=& \alpha_{\mathbf{k}}\beta_{+\mathbf{\mathbf{k}+\varepsilon}_1-\mathbf{\varepsilon}_2}(\alpha_{\mathbf{k}}^2 - \alpha_{\mathbf{k}+\mathbf{\varepsilon}_1-\mathbf{\varepsilon}_2}^2)e_{\mathbf{k}+\mathbf{\varepsilon}_1-\mathbf{\varepsilon}_2}.
\end{eqnarray*} 
Therefore, $[T_2^*T_1,T_1^*T_1] = 0$ if and only if $\alpha_{\mathbf{k}+\varepsilon_1} = \alpha_{\mathbf{k}+\varepsilon_2}$, for all $\mathbf{k} \in \mathbb{Z}_+^2$. 

(ii) \ Since $T_1^* e_{\mathbf{k}}$ whenever $k_1=0$, it is enough to restrict attention to the action of the commutator $[T_2^*T_2,T_1^*T_2]$ on vectors $e_\mathbf{k}$ with $k_1 \ge 1$. \ Assume $k_1 \ge 1$; then 
\begin{eqnarray*}
[T_1^*T_2,T_2^*T_2]e_{\mathbf{k}} &=& \left(\alpha_{\mathbf{k}-\mathbf{\varepsilon}_1+\mathbf{\varepsilon}_2}\beta_{\mathbf{k}}^3 - \beta_{\mathbf{k}-\mathbf{\varepsilon}_1+\mathbf{\varepsilon}_2}^2\alpha_{\mathbf{k}-\mathbf{\varepsilon}_1+\mathbf{\varepsilon}_2}\beta_{\mathbf{k}}\right)e_{\mathbf{k}-\mathbf{\varepsilon}_1+\mathbf{\varepsilon}_2} \\ 
&=& \alpha_{\mathbf{k}-\mathbf{\varepsilon}_1+\mathbf{\varepsilon}_2}\beta_{\mathbf{k}}(\beta_{\mathbf{k}}^2 - \beta_{\mathbf{k}-\mathbf{\varepsilon}_1+\mathbf{\varepsilon}_2}^2)e_{\mathbf{k}-\mathbf{\varepsilon}_1+\mathbf{\varepsilon}_2}.
\end{eqnarray*} 
Therefore, $[T_1^*T_2,T_2^*T_2] = 0$ if and only if $\beta_{\mathbf{k}+\varepsilon_1} = \beta_{\mathbf{k}+\varepsilon_2}$, for all $\mathbf{k} \in \mathbb{Z}_+^2$. 

(iii) \ This is a straightforward calculation.
\end{proof}

\begin{corollary} \ Let $W_{(\alpha,\beta)} \equiv (T_1,T_2)$ be a commuting $2$--variable weighted shift, and recall that $L = (\{T_j^*T_i\}_{i,j=1}^2$, and assume that the entries of $L$ commute. \ Then 
\begin{equation} \label{identity100}
\alpha_{\mathbf{k}} \beta_{\mathbf{k + \varepsilon_2}} = \beta_{\mathbf{k}} \alpha_{\mathbf{k + \varepsilon_1}} \quad \quad (\textrm{all } \mathbf{k} \in \mathbb{Z}_+^2).
\end{equation}
\end{corollary}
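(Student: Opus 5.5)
We will derive (\ref{identity100}) by combining the two commutativity conditions of Lemma \ref{lem100} with the commuting condition (\ref{commuting}) for $W_{(\alpha,\beta)}$. Since the entries of $L$ commute, parts (i) and (ii) of Lemma \ref{lem100} give
$$
\alpha_{\mathbf{k}+\varepsilon_1}=\alpha_{\mathbf{k}+\varepsilon_2} \quad \textrm{and} \quad \beta_{\mathbf{k}+\varepsilon_1}=\beta_{\mathbf{k}+\varepsilon_2} \quad (\textrm{all } \mathbf{k}\in\mathbb{Z}_+^2).
$$
In addition, (\ref{commuting}) reads $\beta_{\mathbf{k}+\varepsilon_1}\alpha_{\mathbf{k}}=\alpha_{\mathbf{k}+\varepsilon_2}\beta_{\mathbf{k}}$ for all $\mathbf{k}$.

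The plan is to substitute into (\ref{commuting}). On the left, replace $\beta_{\mathbf{k}+\varepsilon_1}$ by $\beta_{\mathbf{k}+\varepsilon_2}$, using Lemma \ref{lem100}(ii). On the right, replace $\alpha_{\mathbf{k}+\varepsilon_2}$ by $\alpha_{\mathbf{k}+\varepsilon_1}$, using Lemma \ref{lem100}(i). This yields
$$
\alpha_{\mathbf{k}}\beta_{\mathbf{k}+\varepsilon_2}=\beta_{\mathbf{k}}\alpha_{\mathbf{k}+\varepsilon_1},
$$
which is exactly (\ref{identity100}), valid for every $\mathbf{k}\in\mathbb{Z}_+^2$.

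The only point needing care is the index range in Lemma \ref{lem100}. The proof of part (i) tests the commutator on $e_{\mathbf{k}}$ with $k_2\ge 1$, and so compares $\alpha_{\mathbf{k}}$ with $\alpha_{\mathbf{k}+\varepsilon_1-\varepsilon_2}$. After the shift $\mathbf{k}\mapsto\mathbf{k}+\varepsilon_2$, this is precisely $\alpha_{\mathbf{k}+\varepsilon_2}=\alpha_{\mathbf{k}+\varepsilon_1}$ for all $\mathbf{k}\in\mathbb{Z}_+^2$. The same shift handles part (ii). I will note this reindexing briefly so that the substitution is justified at every lattice point, including $\mathbf{k}$ on the coordinate axes. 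I expect no real obstacle beyond this bookkeeping. Part (iii) of Lemma \ref{lem100} is not needed.
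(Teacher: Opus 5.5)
Your proposal is correct and is essentially the paper's own proof. The paper likewise obtains the identity from the commuting condition (\ref{commuting}), replacing $\beta_{\mathbf{k}+\varepsilon_1}$ by $\beta_{\mathbf{k}+\varepsilon_2}$ via Lemma \ref{lem100}(ii) and $\alpha_{\mathbf{k}+\varepsilon_2}$ by $\alpha_{\mathbf{k}+\varepsilon_1}$ via Lemma \ref{lem100}(i); your reindexing remark just makes explicit a step the paper leaves implicit.
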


\begin{proof} Let $\mathbf{k} \in \mathbb{Z}_+^2$ be arbitrary. \ Then
\begin{eqnarray*}
\alpha_{\mathbf{k}} \beta_{\mathbf{k + \varepsilon_2}} &=& \alpha_{\mathbf{k}} \beta_{\mathbf{k + \varepsilon_1}} \quad (\textrm{by  Lemma \ref{lem100}(ii)}) \\
&=& \alpha_{\mathbf{k + \varepsilon_2}}\beta_{\mathbf{k}} \quad (\textrm{by the commutativity of } W_{(\alpha,\beta)}) \\
&=& \alpha_{\mathbf{k + \varepsilon_1}} \beta_{\mathbf{k}} \quad (\textrm{by Lemma \ref{lem100}(i)}),
\end{eqnarray*}
as desired.
\end{proof}


\subsection{Commutativity of the entries of $R$}

In a way entirely similar to that used in the previous subsection, we now find necessary and sufficient conditions for the commutativity of the entries of $R$.

\begin{lemma} \label{lem101} \ Let $W_{(\alpha,\beta)} \equiv (T_1,T_2)$ be a commuting $2$--variable weighted shift, and recall that 
$R = (\{T_iT_j^*\}_{i,j=1}^2$. \ Then
\newline (i) $T_1T_1^*$ commutes with $T_1T_2^*$ if and only if $\alpha_{\mathbf{k}+\mathbf{\varepsilon}_2} = \alpha_{\mathbf{k}+\mathbf{\varepsilon}_1}$, for all $\mathbf{k} \in \mathbb{Z}_+^2$;
\newline (ii) $T_2T_1^*$ commutes with $T_2T_2^*$ if and only if $\beta_{\mathbf{k}+\mathbf{\varepsilon}_1} = \beta_{\mathbf{k}+\mathbf{\varepsilon}_2}$, for all $\mathbf{k} \in \mathbb{Z}_+^2$;
\newline (iii) $T_1T_1^*$ always commutes with $T_2T_2^*$.
\end{lemma}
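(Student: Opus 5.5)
The plan is to mirror the proof of Lemma \ref{lem100}: compute each commutator directly on the canonical basis vectors $e_{\mathbf{k}}$ and read off when the resulting coefficient vanishes. The basic formulas are
\[
T_1^*e_{\mathbf{k}}=\alpha_{\mathbf{k}-\varepsilon_1}e_{\mathbf{k}-\varepsilon_1}\ (k_1\ge 1),\qquad T_2^*e_{\mathbf{k}}=\beta_{\mathbf{k}-\varepsilon_2}e_{\mathbf{k}-\varepsilon_2}\ (k_2\ge 1),
\]
with $T_1^*e_{\mathbf{k}}=0$ when $k_1=0$ and $T_2^*e_{\mathbf{k}}=0$ when $k_2=0$. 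From these, $T_1T_1^*$ and $T_2T_2^*$ are diagonal, with $T_1T_1^*e_{\mathbf{k}}=\alpha_{\mathbf{k}-\varepsilon_1}^2e_{\mathbf{k}}$ and $T_2T_2^*e_{\mathbf{k}}=\beta_{\mathbf{k}-\varepsilon_2}^2e_{\mathbf{k}}$, where a weight with a negative index is read as $0$. Part (iii) follows at once, since diagonal operators commute.

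For (i), the operator $T_1T_2^*$ is a weighted ``diagonal shift'' $e_{\mathbf{k}}\mapsto \alpha_{\mathbf{k}-\varepsilon_2}\beta_{\mathbf{k}-\varepsilon_2}e_{\mathbf{k}+\varepsilon_1-\varepsilon_2}$, defined for $k_2\ge1$ and zero otherwise. I will compose it with the diagonal operator $T_1T_1^*$ in both orders. This gives
\[
[T_1T_1^*,T_1T_2^*]e_{\mathbf{k}}=\alpha_{\mathbf{k}-\varepsilon_2}\beta_{\mathbf{k}-\varepsilon_2}\bigl(\alpha_{\mathbf{k}-\varepsilon_2}^2-\alpha_{\mathbf{k}-\varepsilon_1}^2\bigr)e_{\mathbf{k}+\varepsilon_1-\varepsilon_2}.
\]
Since all weights are positive, for $k_1,k_2\ge1$ this vanishes exactly when $\alpha_{\mathbf{k}-\varepsilon_1}=\alpha_{\mathbf{k}-\varepsilon_2}$. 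The substitution $\mathbf{m}=\mathbf{k}-\varepsilon_1-\varepsilon_2$ (the same change of variable as in Figure \ref{Figure 2}) turns this into $\alpha_{\mathbf{m}+\varepsilon_2}=\alpha_{\mathbf{m}+\varepsilon_1}$ for all $\mathbf{m}\in\mathbb{Z}_+^2$.

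Part (ii) is the symmetric computation with the roles of the coordinates swapped. Here $T_2T_1^*e_{\mathbf{k}}=\beta_{\mathbf{k}-\varepsilon_1}\alpha_{\mathbf{k}-\varepsilon_1}e_{\mathbf{k}-\varepsilon_1+\varepsilon_2}$, and the commutator with $T_2T_2^*$ produces the factor $\beta_{\mathbf{k}-\varepsilon_1}^2-\beta_{\mathbf{k}-\varepsilon_2}^2$. For $k_1,k_2\ge1$, the same reindexing then yields $\beta_{\mathbf{m}+\varepsilon_1}=\beta_{\mathbf{m}+\varepsilon_2}$.

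The step I expect to be the real obstacle is the boundary of the lattice. Take $k_1=0$ and $k_2\ge1$ in (i). Then $T_1T_1^*e_{(0,k_2)}=0$, but $T_1T_1^*T_1T_2^*e_{(0,k_2)}=\alpha_{(0,k_2-1)}^3\beta_{(0,k_2-1)}e_{(1,k_2-1)}\neq0$. So on these vectors the commutator is never zero, whatever the weights are. The analogous problem occurs in (ii) on the vectors $e_{(k_1,0)}$ with $k_1\ge1$. I would therefore check carefully whether the intended statement concerns the commutator compressed to the interior, meaning the subspace $\mathcal{M}\cap\mathcal{N}$ or, equivalently, the range of $T_1T_2$. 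On that subspace the computation above gives exactly the stated equivalences. My proof would establish the statement in that form and explicitly record the boundary vectors as the exception.
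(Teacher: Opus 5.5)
Your computation is the one the paper uses. The paper writes exactly your formula for $[T_1T_1^*,T_1T_2^*]e_{\mathbf{k}}$ (and the analogous one for (ii)), and it treats (iii) as immediate because both operators are diagonal.

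Your boundary objection is correct, and it is a defect of the statement and of the paper's proof rather than of your plan. The paper reads off the equivalence from the factor $\alpha_{\mathbf{k}-\varepsilon_2}^2-\alpha_{\mathbf{k}-\varepsilon_1}^2$ without noticing that for $k_1=0$ the second term is $0$. So the commutator sends $e_{(0,k_2)}$ to $\alpha_{(0,k_2-1)}^3\beta_{(0,k_2-1)}e_{(1,k_2-1)}\neq 0$ for every choice of weights.

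Thus $T_1T_1^*$ and $T_1T_2^*$ never commute, and likewise $T_2T_1^*$ and $T_2T_2^*$ never commute, via the vectors $e_{(k_1,0)}$. The ``if'' directions of (i) and (ii) are therefore false. For the Helton--Howe shift the weight conditions hold trivially, yet $[T_1T_1^*,T_1T_2^*]=U_+^*\otimes U_+P_0\neq 0$, where $P_0$ is the orthogonal projection onto $\mathbb{C}e_0$. Consequently the equivalence of (ii) with (i) and (iii) in Corollary \ref{cor214} fails as well.

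One adjustment to your repaired statement. What the interior computation characterizes is the vanishing of the commutator \emph{on} $\mathcal{M}\cap\mathcal{N}$: $[T_1T_1^*,T_1T_2^*]P=0$ with $P$ the projection onto $\mathcal{M}\cap\mathcal{N}$, or equivalently $[T_1T_1^*,T_1T_2^*]\,T_1T_2=0$. It is not the vanishing of the compression $P[T_1T_1^*,T_1T_2^*]P$. The compression discards the images of the vectors $e_{(k_1,1)}$, which land on the bottom row, so it only recovers $\alpha_{\mathbf{m}+\varepsilon_1}=\alpha_{\mathbf{m}+\varepsilon_2}$ for $m_2\ge 1$. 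With that wording, your argument proves the corrected (i) and (ii) exactly, and (iii) as stated.
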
 
\begin{proof}
Using the same technique as in Lemma \ref{lem100}, we easily obtain
\begin{eqnarray*}
[T_1T_1^*,T_1T_2^*]e_{\mathbf{k}} &=& \left(\beta_{\mathbf{k - \varepsilon_2}}\alpha_{\mathbf{k - \varepsilon_2}}^3 - \alpha_{\mathbf{k - \varepsilon_1}}^2\beta_{\mathbf{k - \varepsilon_2}}\alpha_{\mathbf{k - \varepsilon_2}}\right)e_{\mathbf{k +\varepsilon_1 - \varepsilon_2}} \\ 
&=& \alpha_{\mathbf{k - \varepsilon_2}}\beta_{\mathbf{k - \varepsilon_2}}(\alpha_{\mathbf{k - \varepsilon_2}}^2 - \alpha_{\mathbf{k - \varepsilon_1}}^2)e_{\mathbf{k + \varepsilon_1 - \varepsilon_2}}.
\end{eqnarray*} 
Similarly,
\begin{eqnarray*}
[T_2T_2^*,T_2T_1^*]e_{\mathbf{k}} &=& \left(\alpha_{\mathbf{k - \varepsilon_1}}\beta_{\mathbf{k - \varepsilon_1}}^3 - \beta_{\mathbf{k - \varepsilon_2}}^2\alpha_{\mathbf{k - \varepsilon_1}}\beta_{\mathbf{k - \varepsilon_1}}\right)e_{\mathbf{k +\varepsilon_2 - \varepsilon_1}} \\ 
&=& \alpha_{\mathbf{k - \varepsilon_1}}\beta_{\mathbf{k - \varepsilon_1}}(\beta_{\mathbf{k - \varepsilon_1}}^2 - \beta_{\mathbf{k - \varepsilon_2}}^2)e_{\mathbf{k + \varepsilon_2 - \varepsilon_1}}.
\end{eqnarray*} 
It follows that $T_1T_1^*$ commutes with $T_1T_2^*$ if and only if $\alpha_{\mathbf{k}+\mathbf{\varepsilon}_2} = \alpha_{\mathbf{k}+\mathbf{\varepsilon}_2}$ and $T_2T_2^*$ commutes with $,T_2T_1^*$ if and only if $\beta_{\mathbf{k}+\mathbf{\varepsilon}_1} = \beta_{\mathbf{k}+\mathbf{\varepsilon}_2}$, for all $\mathbf{k} \in \mathbb{Z}_+^2$. \ Finally, as in Lemma \ref{lem100}(iii), the commutativity of $T_1T_1^*$ and $T_2T_2^*$ is immediate.
\end{proof}
We conclude this subsection with a result that combines Lemmas \ref{lem100} and \ref{lem101}.

\begin{corollary} \label{cor214} \ Let $W_{(\alpha,\beta)} \equiv (T_1,T_2)$ be a commuting $2$--variable weighted shift, and let $L$ and $R$ be as in Lemmas \ref{lem100} and \ref{lem101}. \ The following statements are equivalent:
\newline (i) \ the entries of $L$ commute;
\newline (ii) \ the entries of $R$ commute;
\newline (iii) \ $\alpha_{\mathbf{k}+\mathbf{\varepsilon}_2} = \alpha_{\mathbf{k}+\mathbf{\varepsilon}_1}$ and $\beta_{\mathbf{k}+\mathbf{\varepsilon}_1} = \beta_{\mathbf{k}+\mathbf{\varepsilon}_2}$, for all $\mathbf{k} \in \mathbb{Z}_+^2$.
\end{corollary}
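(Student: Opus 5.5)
The corollary should follow by combining Lemma \ref{lem100} and Lemma \ref{lem101}, so the plan is to show that (i) and (ii) are each equivalent to (iii). I would first fix the meaning of ``the entries commute'': all four entries pairwise commute. For $L$ these are $T_1^*T_1$, $T_2^*T_1$, $T_1^*T_2=(T_2^*T_1)^*$, $T_2^*T_2$.

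\emph{(i) $\Leftrightarrow$ (iii).} If the entries of $L$ commute, then in particular $[T_1^*T_1,T_2^*T_1]=0$ and $[T_2^*T_2,T_1^*T_2]=0$, and Lemma \ref{lem100}(i),(ii) give the two weight identities in (iii).

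Conversely, assume (iii). Then Lemma \ref{lem100}(i),(ii) give those two commutation relations. Taking adjoints gives $T_1^*T_1$ commuting with $T_1^*T_2$, and $T_2^*T_2$ commuting with $T_2^*T_1$. Lemma \ref{lem100}(iii) handles the pair $T_1^*T_1$, $T_2^*T_2$.

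The one remaining pair is $T_2^*T_1$ with $T_1^*T_2$, i.e.\ the normality of $T_2^*T_1$. I would check this directly on basis vectors.
\begin{itemize}
\item $T_2^*T_1$ sends $e_{\mathbf{k}}$ to $\alpha_{\mathbf{k}}\beta_{\mathbf{k}+\varepsilon_1-\varepsilon_2}\,e_{\mathbf{k}+\varepsilon_1-\varepsilon_2}$ when $k_2\ge1$, and to $0$ when $k_2=0$.
\item Hence both $T_1^*T_2T_2^*T_1$ and $T_2^*T_1T_1^*T_2$ are diagonal.
\item Their diagonal entries are $\alpha_{\mathbf{k}}^2\beta_{\mathbf{k}+\varepsilon_1-\varepsilon_2}^2$ for $k_2\ge1$ and $\alpha_{\mathbf{k}-\varepsilon_1+\varepsilon_2}^2\beta_{\mathbf{k}}^2$ for $k_1\ge1$, respectively.
\end{itemize}
By the commutativity relation (\ref{commuting}) together with (iii), these coincide in the interior. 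The boundary cases $k_1=0$ or $k_2=0$ need separate attention, and this is the main obstacle. There one expression vanishes and the other need not. For example, at $\mathbf{k}=(0,1)$ one gets $\alpha_{(0,1)}^2\beta_{(1,0)}^2\neq 0$ versus $0$.

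So full pairwise commutativity of all four entries looks false in general. I would therefore interpret ``entries commute'' as in Lemma \ref{lem100}: the commutations between diagonal and off-diagonal entries, plus (iii) of that lemma. That interpretation is exactly what the Corollary following Lemma \ref{lem100} and Theorem \ref{Thm1}'s setting use. I would state this convention explicitly at the start of the proof.

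\emph{(ii) $\Leftrightarrow$ (iii).} This runs identically using Lemma \ref{lem101}(i)--(iii) together with adjoints. One should note that Lemma \ref{lem101}(i) as stated has the condition $\alpha_{\mathbf{k}+\varepsilon_2}=\alpha_{\mathbf{k}+\varepsilon_1}$; the typo in its proof's last line is harmless. Then (i) $\Leftrightarrow$ (ii) follows since both are equivalent to (iii).
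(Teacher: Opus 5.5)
Your route is the paper's own: there the corollary is presented simply as the combination of Lemmas \ref{lem100} and \ref{lem101}. Your catch on $L$ is right. Since $T_1^*T_2T_2^*T_1e_{(0,1)}=\alpha_{(0,1)}^2\beta_{(1,0)}^2e_{(0,1)}$ while $T_2^*T_1T_1^*T_2e_{(0,1)}=0$, the operator $T_2^*T_1$ is never normal. Under your convention (the diagonal entries commute with each other and with the off-diagonal ones), (i) $\Leftrightarrow$ (iii) does follow from Lemma \ref{lem100} and taking adjoints.

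The gap is the sentence ``(ii) $\Leftrightarrow$ (iii) runs identically using Lemma \ref{lem101}.'' You checked that lemma only for a typo, but it is false. The cause is the same boundary effect you found for $L$. Here, however, it breaks the diagonal/off-diagonal commutations themselves, so no choice of convention rescues it. We have $T_1^*e_{(0,1)}=0$, whereas $T_1T_2^*e_{(0,1)}=\alpha_{(0,0)}\beta_{(0,0)}e_{(1,0)}$ and $T_1T_1^*e_{(1,0)}=\alpha_{(0,0)}^2e_{(1,0)}$. Hence
\[
[T_1T_1^*,T_1T_2^*]\,e_{(0,1)}=\alpha_{(0,0)}^3\beta_{(0,0)}\,e_{(1,0)}\neq 0,
\qquad
[T_2T_2^*,T_2T_1^*]\,e_{(1,0)}=\alpha_{(0,0)}\beta_{(0,0)}^3\,e_{(0,1)}\neq 0 .
\]
The computation in the proof of Lemma \ref{lem101} is valid only for $k_1\ge 1$ (respectively $k_2\ge 1$ for the second commutator). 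On $e_{(0,k_2)}$ with $k_2\ge 1$, the term $T_1T_2^*T_1T_1^*e_{\mathbf{k}}$ vanishes while $T_1T_1^*T_1T_2^*e_{\mathbf{k}}$ does not.

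So (ii) fails for \emph{every} $2$--variable weighted shift with positive weights. Meanwhile (iii) holds, for example, for the Helton--Howe shift. Thus (iii) $\Rightarrow$ (ii) is false under any reading, and (i) $\Rightarrow$ (ii) is false under yours, so this step cannot be repaired. What your argument actually proves is (i) $\Leftrightarrow$ (iii) under your convention. The paper's argument inherits the same defect from Lemma \ref{lem101}.

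A smaller point: your claim that your convention is the one used in the setting of Theorem \ref{Thm1} is inaccurate. Once $A,B,X$ commute, the Cayley--Hamilton identity behind Theorem \ref{Thm1} has upper-left entry $XX^*-X^*X$. So it needs $X$ normal, i.e., the $C^*$-algebra generated by $A,X,B$ must be abelian. Since $T_2^*T_1$ is never normal, $L$ for a weighted shift never satisfies the hypotheses of Theorem \ref{Thm1}.
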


We now characterize the $2$--variable weighted shifts for which $L$ and $R$ have commuting entries.

\begin{theorem} \ \ Let $W_{(\alpha,\beta)} \equiv (T_1,T_2)$ be a commuting $2$--variable weighted shift, let $L$ and $R$ be as in Lemmas \ref{lem100} and \ref{lem101}, and assume that both $L$ and $R$ have commuting entries. \ For $\ell,m \ge 0$, let $\omega_\ell := \alpha_{(\ell,0)}$ and $\eta_m := \beta_{(0,m)}$. \ Then $W_{(\alpha,\beta)}$ is the canonical embedding of $(W_\omega,W_\eta)$ (see Definition \ref{def26}).
\end{theorem}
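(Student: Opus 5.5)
By Corollary \ref{cor214}, the hypothesis that $L$ and $R$ have commuting entries is equivalent to the two families of identities
$$
\alpha_{\mathbf{k}+\varepsilon_1}=\alpha_{\mathbf{k}+\varepsilon_2}, \qquad \beta_{\mathbf{k}+\varepsilon_1}=\beta_{\mathbf{k}+\varepsilon_2} \qquad (\textrm{all } \mathbf{k}\in\mathbb{Z}_+^2).
$$
So I will work only with these identities, and not with the operators themselves.

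The plan is to prove by induction on $n=k_1+k_2$ that $\alpha$ and $\beta$ are constant along each cross diagonal $\{k_1+k_2=n\}$. The base case $n=0$ is trivial, since the diagonal is a single point. For $n\ge 1$, any two adjacent lattice points on the diagonal have the form $\mathbf{k}+\varepsilon_1$ and $\mathbf{k}+\varepsilon_2$ with $\mathbf{k}=(k_1,k_2-1)$ when moving from $(k_1+1,k_2-1)$ to $(k_1,k_2)$. The identities above then say exactly that $\alpha$, and likewise $\beta$, takes equal values at adjacent points. Chaining along the diagonal shows that $\alpha_{(k_1,k_2)}=\alpha_{(k_1+k_2,0)}$ and $\beta_{(k_1,k_2)}=\beta_{(0,k_1+k_2)}$. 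Actually no induction is needed, just this chaining along each diagonal.

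Setting $\omega_\ell:=\alpha_{(\ell,0)}$ and $\eta_m:=\beta_{(0,m)}$, we get $\alpha_{(k_1,k_2)}=\omega_{k_1+k_2}$ and $\beta_{(k_1,k_2)}=\eta_{k_1+k_2}$. This is exactly the definition of the canonical embedding of $(W_\omega,W_\eta)$ in Definition \ref{def26}. The sequences $\omega$ and $\eta$ are bounded and positive because they are subsequences of the original weights. Finally, the commutativity condition (\ref{commuting}) turns into $\omega_\ell\eta_{\ell+1}=\omega_{\ell+1}\eta_\ell$, consistent with Remark \ref{rem27}(i). This also matches identity (\ref{identity100}), which serves as a check.

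The only delicate point is the index bookkeeping: I need to confirm that every pair of adjacent points on a diagonal really arises as $(\mathbf{k}+\varepsilon_1,\mathbf{k}+\varepsilon_2)$ for some $\mathbf{k}\in\mathbb{Z}_+^2$. This holds because both points have one coordinate that is at least $1$, so subtracting the appropriate $\varepsilon_i$ stays in $\mathbb{Z}_+^2$. I also need to make sure the statements of Lemmas \ref{lem100} and \ref{lem101} are used in the form recorded in Corollary \ref{cor214}(iii), despite the typographical slips in their proofs. I expect no further obstacle.
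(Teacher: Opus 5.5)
Your proof is correct and follows the paper's own argument: Corollary \ref{cor214} gives $\alpha_{\mathbf{k}+\varepsilon_1}=\alpha_{\mathbf{k}+\varepsilon_2}$ and $\beta_{\mathbf{k}+\varepsilon_1}=\beta_{\mathbf{k}+\varepsilon_2}$, and chaining along each cross diagonal down to the appropriate axis gives $\alpha_{\mathbf{k}}=\omega_{k_1+k_2}$ and $\beta_{\mathbf{k}}=\eta_{k_1+k_2}$ (your bookkeeping via $\mathbf{k}=(k_1,k_2-1)$ is cleaner than the paper's displayed chain). You only use the forward implication, and it already follows from Lemma \ref{lem100}(i)--(ii) for $L$ alone, so do not assert an equivalence: the $R$-part of Corollary \ref{cor214} cannot be reversed, since $T_1T_1^*T_1T_2^*e_{(0,1)}=\alpha_{(0,0)}^3\beta_{(0,0)}e_{(1,0)}\neq 0=T_1T_2^*T_1T_1^*e_{(0,1)}$ for every $2$--variable weighted shift.
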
 

\begin{proof} Consider first $\alpha_{\mathbf{k}}$. \ By Corollary \ref{cor214}, 
$$
\alpha_{\mathbf{k}} = \alpha_{\mathbf{k}+\mathbf{\varepsilon}_1-\mathbf{\varepsilon}_2} = \ldots = \alpha_{\mathbf{k}+k_2\cdot \mathbf{\varepsilon}_1} = \alpha_{(k_1+k_2,0)} = \omega_{k_1+k_2}.
$$
Similarly,
$$
\beta_{\mathbf{k}} = \beta_{\mathbf{k}+\mathbf{\varepsilon}_2-\mathbf{\varepsilon}_1} = \ldots = \beta_{\mathbf{k}+k_1\cdot \mathbf{\varepsilon}_2} = \beta_{(0,k_1+k_2)} = \eta_{k_1+k_2}.
$$
It follows that $W_{(\alpha,\beta)}$ is indeed the canonical embedding of $(W_\omega,W_\eta)$.
\end{proof}

We will now establish that, for commuting canonical embeddings, the hyponormality (resp. subnormality) of $W_\omega$ and $W_\eta$ automatically implies the hyponormality (resp. subnormality) of the $2$--variable weighted shift $W_{(\alpha,\beta)}$. \ First, we need a lemma.

\begin{lemma} \label{lem223}
Let $W_{(\alpha,\beta)}$ be the canonical embedding of two subnormal unilateral weighted shifts $W_\omega$ and $W_\eta$, and assume that $W_{(\alpha,\beta)}$ is commuting. \ Then there exists a positive number $r$ such that 
$$
\eta_{k} = r \omega_k \quad (k \ge 0).
$$
\end{lemma}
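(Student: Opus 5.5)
The approach is to read the commutativity condition of Remark \ref{rem27}(i) as the constancy of a ratio and propagate it along the index. By Remark \ref{rem27}(i), commutativity of the canonical embedding gives
$$
\omega_\ell \cdot \eta_{\ell+1} = \omega_{\ell+1} \cdot \eta_\ell \qquad (\ell \ge 0).
$$
All weights are positive, so we may divide by $\omega_\ell\,\omega_{\ell+1}$ and obtain
$$
\frac{\eta_{\ell+1}}{\omega_{\ell+1}} = \frac{\eta_\ell}{\omega_\ell} \qquad (\ell \ge 0).
$$

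A straightforward induction on $\ell$ then shows that $\eta_\ell/\omega_\ell = \eta_0/\omega_0$ for every $\ell \ge 0$. Setting $r := \eta_0/\omega_0 > 0$, we get $\eta_k = r\,\omega_k$ for all $k \ge 0$, which is the claim.

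It is worth observing that subnormality of $W_\omega$ and $W_\eta$ is not used in this argument. Only commutativity and the positivity of the weights enter. The subnormality hypothesis is presumably there for the subsequent application: $W_\eta = r W_\omega$ is then a scalar multiple of a subnormal shift, and its Berger measure is a dilation of that of $W_\omega$.

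There is no real obstacle here. The only point needing care is to confirm the exact form of the commutativity identity (\ref{commuting}) for the canonical embedding, namely that $\beta_{\mathbf{k}+\varepsilon_1}\alpha_{\mathbf{k}} = \alpha_{\mathbf{k}+\varepsilon_2}\beta_{\mathbf{k}}$ becomes $\eta_{\ell+1}\omega_\ell = \omega_{\ell+1}\eta_\ell$ with $\ell = k_1+k_2$. This follows directly from $\alpha_{\mathbf{k}} = \omega_{k_1+k_2}$ and $\beta_{\mathbf{k}} = \eta_{k_1+k_2}$.
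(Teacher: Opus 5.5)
Your proof is correct and matches the paper's: both take $\omega_\ell\eta_{\ell+1}=\omega_{\ell+1}\eta_\ell$ from Remark \ref{rem27}(i), set $r:=\eta_0/\omega_0$, and propagate by induction. You are also right that the subnormality of $W_\omega$ and $W_\eta$ plays no role; the paper's proof does not use it either.
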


\begin{proof}
From Remark \ref{rem27}(i), we know that $\omega_{\ell+1}\eta_{\ell} = \omega_{\ell}\eta_{\ell+1}$, for all $\ell \ge 0$. \ If we let 
$r:=\dfrac{\eta_0}{\omega_0}$, it follows that $\eta_{0}=r \omega_{0}$, $\eta_{1}=r \omega_{1}$, and by induction, $\eta_{\ell} = r \omega_{\ell} \; \, (\ell \ge 2)$. 
\end{proof}

\begin{theorem} \label{thm221}
Let $W_{(\alpha,\beta)}$ be the canonical embedding of two unilateral weighted shifts $W_\omega$ and $W_\eta$, and assume that $W_{(\alpha,\beta)}$ is commuting. \ Then \newline
(i)  $W_{(\alpha,\beta)} \textrm{ is hyponormal } \Longleftrightarrow W_{\omega} \textrm{ is hyponormal} \Longleftrightarrow W_{\eta} \textrm{ is hyponormal}$. \newline
(ii)  $W_{(\alpha,\beta)} \textrm{ is $k$--hyponormal } \!\Longleftrightarrow W_{\omega} \textrm{ is $k$--hyponormal}  \Longleftrightarrow W_{\eta} \textrm{ is $k$--hyponormal} \quad (1 \le k \in \mathbb{Z}_+$). \newline
(iii) \ $W_{(\alpha,\beta)} \textrm{ is subnormal } \Longleftrightarrow W_{\omega} \textrm{ is subnormal} \Longleftrightarrow W_{\eta} \textrm{ is subnormal}$.
\end{theorem}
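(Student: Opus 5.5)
The approach is to reduce everything to the moment sequence of $W_\omega$ and then apply the moment criteria already available: Lemma \ref{khypon} and Lemma \ref{khypo} for ($k$--)hyponormality, and Theorem \ref{Berger} for subnormality.

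\emph{Step 1: $W_\eta$ is a multiple of $W_\omega$.} The argument of Lemma \ref{lem223} never uses subnormality; it uses only the commutativity relation of Remark \ref{rem27}(i). It therefore gives $\eta_\ell = r\,\omega_\ell$ for all $\ell$, with $r=\eta_0/\omega_0>0$, for any commuting canonical embedding. Since $W_\eta = rW_\omega$, each of hyponormality, $k$--hyponormality and subnormality holds for $W_\eta$ exactly when it holds for $W_\omega$. This gives the second equivalence in each of (i)--(iii), so it remains to compare $W_{(\alpha,\beta)}$ with $W_\omega$.

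\emph{Step 2: compute the moments.} Write $\gamma^\omega_n := \omega_0^2\cdots\omega_{n-1}^2$ for the moments of $W_\omega$. To compute $\gamma_{(k_1,k_2)}$ via (\ref{moment1}), take the path that first runs along the bottom row and then goes up the column $k_1$. Along the row the weights are $\alpha_{(j,0)}=\omega_j$ for $j<k_1$. Up the column they are $\beta_{(k_1,j)}=\eta_{k_1+j}=r\,\omega_{k_1+j}$ for $j<k_2$. Hence
$$
\gamma_{(k_1,k_2)} = r^{2k_2}\,\gamma^\omega_{k_1+k_2}\qquad(\mathbf{k}\in\mathbb{Z}_+^2).
$$

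\emph{Step 3: $k$--hyponormality, which gives (ii) and then (i).} Fix $\mathbf{u}$ and set $N:=u_1+u_2$. Then the matrix of Lemma \ref{khypon} factors as
$$
M_{\mathbf{u}}(k)=r^{2u_2}\,D\,C\,D,
$$
where $D=\operatorname{diag}(r^{2n})_{(m,n)}$ is positive and invertible, and $C_{(m,n),(p,q)}=\gamma^\omega_{N+(m+n)+(p+q)}$. Next write $C=V^{t}H_N(k)V$. Here $H_N(k)=(\gamma^\omega_{N+i+j})_{i,j=0}^k$ is the Hankel matrix, and $V$ is the $0/1$ matrix sending the index $(m,n)$ to the degree $i=m+n$. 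Every degree $0,\dots,k$ is attained, so $V$ maps onto $\mathbb{C}^{k+1}$, and therefore $C\ge0$ if and only if $H_N(k)\ge0$. By the standard one-variable criterion, $W_\omega$ is $k$--hyponormal if and only if $H_N(k)\ge0$ for all $N\ge0$. As $\mathbf{u}$ ranges over $\mathbb{Z}_+^2$, the number $N$ ranges over all of $\mathbb{Z}_+$, so this proves (ii). Part (i) is the case $k=1$ (equivalently, Lemma \ref{khypo}).

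\emph{Step 4: subnormality.} There are two routes to (iii).
\begin{itemize}
\item Via Step 3: use that subnormality is equivalent to $k$--hyponormality for every $k$, both for unilateral shifts and, through the Berger-type theorem, for $2$--variable shifts.
\item Directly via Theorem \ref{Berger}: if $\gamma^\omega_n=\int s^n\,d\nu(s)$, let $\mu$ be the push-forward of $\nu$ under $s\mapsto(s,r^2s)$. Then $\iint t_1^{k_1}t_2^{k_2}\,d\mu=r^{2k_2}\gamma^\omega_{k_1+k_2}=\gamma_{\mathbf{k}}$. Conversely, restricting a representing measure $\mu$ for $W_{(\alpha,\beta)}$ to moments with $k_2=0$, that is, taking its first marginal, yields a representing measure for $W_\omega$.
\end{itemize}

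The main point needing care is Step 3: checking that the degree-collapsing map $V$ preserves positivity in both directions, which holds because $V$ is onto. The rest is bookkeeping with the factor $r$.
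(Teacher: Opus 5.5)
Your proof is correct and is essentially the paper's argument: the congruence $M_{\mathbf{u}}(k)=r^{2u_2}\,D\,V^{t}H_N(k)V\,D$ with $V$ onto is a matrix form of the paper's observation that $M_{\mathbf{u}}(k)$ is a flat extension of the Hankel moment matrix of $W_\omega$ (each column $X^pY^q$ is a multiple of $X^{p+q}$), and your first route to (iii), via $k$--hyponormality for all $k$, is the paper's. Two small bonuses: your formula $\gamma_{(k_1,k_2)}=r^{2k_2}\gamma^{\omega}_{k_1+k_2}$ has the correct exponent (the paper's $r^{\ell}$ is a harmless slip, since flatness still holds with $r^2$ in place of $r$), and pushing the Berger measure forward under $s\mapsto(s,r^2s)$ gives a direct proof of (iii) that avoids the Bram--Halmos characterization.
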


\begin{proof}
Let $\gamma_{(k,\ell)}^{(\omega)}$, $\gamma_{(k,\ell)}^{(\eta)}$, and $\gamma_{(k,\ell)}^{(\alpha,\beta)}$ denote the moments of $W_\omega$, $W_{\eta}$, and $W_{(\alpha,\beta)}$, respectively. \ By commutativity, the moments $\gamma_{(k,\ell)}^{(\alpha,\beta)}$ of $W_{(\alpha,\beta)}$ are well-defined and can be computed as follows:
\begin{eqnarray} 
\gamma_{(0,0)}^{(\alpha,\beta)}&=&1, \nonumber \\
\gamma_{(k,0)}^{(\alpha,\beta)}&=&\alpha_{(0,0)}^2\alpha_{(1,0)}^2\cdot \ldots \cdot \alpha_{(k-1,0)}^2 \quad (k \ge 1), \nonumber \\
\gamma_{(0,\ell)}^{(\alpha,\beta)}&=&\beta_{(0,0}^2\beta_{(0,1)}^2\cdot \ldots \cdot \beta_{(0,k-1)}^2 \quad (\ell \ge 1), \textrm{ and} \nonumber \\
\gamma_{(k,\ell)}^{(\alpha,\beta)} &=& \alpha_{(0,0)}^2\alpha_{(1,0)}^2\cdot \ldots \cdot \alpha_{(k-1,0)}^2 \beta_{(k,0)}^2 \beta_{(k,1)}^2 \cdot \ldots \cdot \beta_{(k,\ell-1})^2 \quad (k, \ell \ge 1) \nonumber \\
&=&\omega_0^2 \omega_1^2 \cdot \ldots \cdot \omega_{k-1}^2 \eta_k^2 \eta_{k+1}^2 \cdot \ldots \cdot \eta_{k+\ell-1}^2 \nonumber \\
&=& \gamma_k^{(\omega)}\cdot \dfrac{\gamma_{k+\ell}^{(\eta)}}{\gamma_{k}^{(\eta)}} = 
\gamma_k^{(\omega)}\cdot \dfrac{r^{k+\ell}\gamma_{k+\ell}^{(\omega)}}{r^k\gamma_{k}^{(\omega)}} \quad (\textrm{using Lemma \ref{lem223}}) \nonumber \\
&=& r^\ell \gamma_{k+\ell}^{(\omega)}.     
\end{eqnarray}
(i) \ Using Lemma \ref{khypo}, Lemma \ref{khypon}, and (\ref{eq281}), consider the $3 \times 3$ matrix
\begin{equation} \label{1hypo}
M_{(p,q)}( W_{(\alpha ,\beta )}) \equiv \left(
\begin{array}{ccc}
\gamma_{(p,q)}^{(\alpha,\beta)} & \gamma _{(p,q)+\varepsilon_1}^{(\alpha,\beta)} & \gamma
_{(p,q)+\varepsilon_2}^{(\alpha,\beta)} \\
&& \\
\gamma _{(p,q)+\varepsilon_1}^{(\alpha,\beta)} & \gamma _{(p,q)+2\varepsilon_1}^{(\alpha,\beta)} & \gamma
_{(p,q)+\varepsilon_1+\varepsilon_2}^{(\alpha,\beta)} \\
&& \\
\gamma _{(p,q)+\varepsilon_2}^{(\alpha,\beta)} & \gamma _{(p,q)+\varepsilon_1+\varepsilon_2}^{(\alpha,\beta)} & \gamma
_{(p,q)+2\varepsilon_2}^{(\alpha,\beta)}
\end{array}
\right) 
= \left(
\begin{array}{ccc}
\gamma_{(p,q)}^{(\alpha,\beta)} & \gamma _{(p+1,\ell)}^{(\alpha,\beta)} & \gamma_{(p,q+1)}^{(\alpha,\beta)} \\
&& \\
\gamma _{(p+1,q)}^{(\alpha,\beta)} & \gamma _{(p+2,q)}^{(\alpha,\beta)} & \gamma
_{(p+1,q+1)}^{(\alpha,\beta)} \\
&& \\
\gamma _{(p,q+1)}^{(\alpha,\beta)} & \gamma _{(p+1,q+1)}^{(\alpha,\beta)} & \gamma
_{(p,q+2)}^{(\alpha,\beta)}
\end{array}
\right) \nonumber
\end{equation}

\begin{equation} \label{eq281}
= \left(\begin{array}{ccc}
r^q \gamma_{p+q}^{(\omega)} & r^q \gamma_{p+q+1}^{(\omega)} & r^{q+1} \gamma_{p+q+1}^{(\omega)} \\
&& \\
r^q \gamma_{p+q+1}^{(\omega)} & r^q \gamma_{p+q+2}^{(\omega)} & r^{q+1} \gamma_{pk+q+2}^{(\omega)} \\
&& \\
r^{q+1} \gamma_{p+q+1}^{(\omega)} & r^{q+1} \gamma_{p+q+2}^{(\omega)} & r^{q+2} \gamma_{p+q+2}^{(\omega)}
\end{array}
\right) 
\!\!=
\! r^q \cdot \left(
\begin{array}{ccc}
\gamma_{p+q}^{(\omega)} & \gamma_{p+q+1}^{(\omega)} & r \gamma_{p+q+1}^{(\omega)} \\
&& \\
\gamma_{p+q+1}^{(\omega)} & \gamma_{p+q+2}^{(\omega)} & r \gamma_{p+q+2}^{(\omega)} \\
&& \\
r \gamma_{p+q+1}^{(\omega)} & r \gamma_{p+q+2}^{(\omega)} & r^2 \gamma_{p+q+2}^{(\omega)}
\end{array}
\right).
\end{equation}

It is now evident that the last matrix is a flat extension of its compression to the first two rows and columns, namely a flat extension of the $2 \times 2$ matrix 
$\left(
\begin{array}{cc}
\gamma_{p+q}^{(\omega)} & \gamma_{p+q+1}^{(\omega)} \\
\gamma_{p+q+1}^{(\omega)} & \gamma_{p+q+2}^{(\omega)} 
\end{array}
\right)$, whose positive semidefiniteness is controlled by the hyponormality of $W_\omega$. \newline
(ii) \ To check $k$--hyponormality, we need to show the positive semidefiniteness of the matrix
$$
M_{(u,v)}(k):=\left(\gamma_{(u,v)+(m,n)+(p,q)}\right)_{\substack{0 \leqslant m+n \leqslant k \\ 0 \leqslant p+q \leqslant k}} \ge 0 
$$
for all $(u,v) \in \mathbb{Z}_{+}^2$. \ Now, if we label the columns of either matrix in (\ref{1hypo}) as $1,X,Y$, it is easy to see that $Y = r X$; this provides an alternative proof of the above-mentioned flatness. \newline
The recursive relation $Y = r X$ readily extends to the bigger moment matrices $M_{(u,v)}(k)$ introduced in Lemma \ref{khypon}. \ Concretely, if we label the rows and columns of $M_{(u,v)}(k)$ as $1,X,Y,X^2,XY, \linebreak Y^2,\ldots, XY^{k-1},Y^k$, we can show that the powers of $Y$ satisfy the identities $Y^\ell = r^\ell X^\ell$, for all $\ell \ge 1$. \ As a result, any mixed power $X^p Y^q$ can be written as $r^q \cdot X^{p+q}$, whenever $p,q \ge 1$. \ By successive and simultaneous transpositions of rows and columns, one can collect all powers of $X$ upfront, so that the upper-left $(k+1) \times (k+1)$ compression of the modified $M_{(u,v)}(k)$ is $M_u^{(\omega)}(k)$, i.e., the moment matrix of $W_\omega$ of order $k$ based at $p$. \ This is the prototypical moment matrix that detects the $k$--hyponormality of $W_\omega$. \newline  
Moreover, since all remaining columns include at least one power of $Y$, it follows that the entire matrix $M_{(u,v)}(k)$ is a flat extension of the moment matrix of $W_\omega$, as follows:
$$
M_{(u,v)}(k)=\left(
\begin{array}{ll}
M_u^{(\omega)}(k) & M_u^{(\omega)}(k)W \\
W^{\ast }M_u^{(\omega)}(k) & W^{\ast }M_u^{(\omega)}(k)W
\end{array}
\right).
$$
It follows that the $k$--hyponormality of $W_{(\alpha,\beta)}$ is equivalent to the $k$--hyponormality if $W_\omega$, which in turn is equivalent to the $k$-hyponormality of $W_\eta$, since one can write the Column $Y$ as $\dfrac{1}{r} \cdot X$. \newline
(iii) \ It is well known that, for unilateral and $2$--variable weighted shifts, subnormality is equivalent to $k$--hyponormality for every $k$, as established by the Bram-Halmos Theorem. \ In view of this, the assertion in (iii) follows easily from (ii).
\end{proof}

\begin{remark}
If $W_{(\alpha ,\beta )}$ is a canonical embedding, it is easy to verify that $L$ is always positive, and that the semi-hyponormality of $W_{(\alpha ,\beta )}$ implies the semi-hyponormality of both $W_\omega$ and $W_\eta$. \ Since for unilateral weighted shifts, the notions of hyponormality and semi-hyponormality agree, it then follows that both $W_\omega$ and $W_\eta$ are hyponormal.
\end{remark}

 
\subsection{The Class $W_{(\alpha,\beta)}(a,x,y)$}

In this subsection, we describe in detail our fundamental class of examples. \ Given three parameters $a$, $x$ and $y$, in the open unit interval $(0,1)$, we define the sequences $\alpha$ and $\beta$ as follows:
$$
\alpha_{(0,0)}:=x, \; \alpha_{(0,j)}:=a \, (\textrm{for all }j \ge 1), \; \alpha_{(i,j)}:=1 \, (\textrm{for all }i \ge 1, j \ge 0);
$$
$$
\beta_{(0,0)}:=y, \; \beta_{(i,0)}:=\dfrac{ay}{x} \, (\textrm{for all }i \ge 1), \; \beta_{(i,j)}:=1 \, (\textrm{for all }i \ge 1, j \ge 0).
$$
\medskip
For the reader's convenience, we replicate below the weight diagram for $W_{(\alpha,\beta)}(a,x,y)$, already shown in Figure 1(ii).
\setlength{\unitlength}{1mm} \psset{unit=1mm}
\begin{figure}[th]
\begin{center}
\begin{picture}(135,63)

\rput(-33,27){

\psline{->}(75,20)(130,20)
\psline(75,35)(127,35)
\psline(75,50)(127,50)
\psline(75,65)(127,65)

\psline{->}(75,20)(75,72)
\psline(90,20)(90,71)
\psline(105,20)(105,71)
\psline(120,20)(120,71)

\put(82,21){\footnotesize{$\cbl{x}$}}
\put(97,21){\footnotesize{$1$}}
\put(112,21){\footnotesize{$1$}}

\put(82,36){\footnotesize{$\cbl{a}$}}
\put(97,36){\footnotesize{$1$}}
\put(112,36){\footnotesize{$1$}}

\put(82,51){\footnotesize{$a$}}
\put(97,51){\footnotesize{$1$}}
\put(112,51){\footnotesize{$1$}}


\put(76,26){\footnotesize{$\cbl{y}$}}
\put(76,41){\footnotesize{$1$}}
\put(76,56){\footnotesize{$1$}}

\put(91,26){\footnotesize{$\dfrac{ay}{x}$}}
\put(91,41){\footnotesize{$1$}}
\put(91,56){\footnotesize{$1$}}

\put(106,26){\footnotesize{$\dfrac{ay}{x}$}}
\put(106,41){\footnotesize{$1$}}
\put(106,56){\footnotesize{$1$}}

\put(104.5,73){$\vdots$}
\put(89.5,73){$\vdots$}
\put(119.5,73){$\vdots$}

\put(124,36.1){$\cdots$}
\put(124,51.1){$\cdots$}
\put(124,66.1){$\cdots$}

\psline{->}(85,12)(100,12)
\put(92,8){$\rm{T}_1$}
\psline{->}(64,35)(64,50)
\put(58,42){$\rm{T}_2$}

\put(67,17){\footnotesize{$(0,0)$}}
\put(86,16){\footnotesize{$(1,0)$}}
\put(101,16){\footnotesize{$(2,0)$}}
\put(67,34){\footnotesize{$(0,1)$}}
\put(67,50){\footnotesize{$(0,2)$}}
\put(67,65){\footnotesize{$(0,3)$}}
\put(118.5,16){\footnotesize{$\cdots$}}


\put(150,65){\footnotesize{$0<a,x,y<1$}}
\put(154.5,57.5){\footnotesize{$\dfrac{ay}{x}<1$}}
\psline(148,69)(171,69)
\psline(148,54)(171,54)
\psline(148,54)(148,69)
\psline(171,54)(171,69)


}

\end{picture}
\end{center}
\caption{Weight diagram of the $2$--variable weighted shift $W_{(\alpha,\beta)}(a,x,y)$.}
\end{figure}

\begin{lemma}
Let $W_{(\alpha,\beta)}(a,x,y)$ be as above, and let $L$ be the associated $2 \times 2$ operator matrix. \ Then $L \ge 0$.
\end{lemma}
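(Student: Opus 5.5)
We will use the Proposition characterizing positivity of $L$ for $2$--variable weighted shifts: $L \ge 0$ if and only if
$$
\alpha_{\mathbf{k}+\varepsilon_2}\,\beta_{\mathbf{k}+\varepsilon_1} \le \alpha_{\mathbf{k}+\varepsilon_1}\,\beta_{\mathbf{k}+\varepsilon_2} \quad \textrm{for all } \mathbf{k}\in\mathbb{Z}_+^2,
$$
which is the condition (\ref{con12}) obtained from Smul'jan's Lemma. \ So the whole proof reduces to checking this single inequality for the weights of $W_{(\alpha,\beta)}(a,x,y)$. \ We split into cases according to the position of $\mathbf{k}=(k_1,k_2)$, since the weights are constant away from the two coordinate axes.

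\emph{Case $k_1 \ge 1$.} \ Then $\mathbf{k}+\varepsilon_1$, $\mathbf{k}+\varepsilon_2$ have first coordinate $\ge 1$, so all $\alpha$'s involved equal $1$. \ Also $\beta_{\mathbf{k}+\varepsilon_2}=1$, because its second coordinate is $\ge 1$. \ The only weight that can differ from $1$ is $\beta_{\mathbf{k}+\varepsilon_1}$, which equals $ay/x$ when $k_2=0$. \ The inequality becomes $\beta_{\mathbf{k}+\varepsilon_1} \le 1$, and this holds since $ay/x<1$ by the standing assumption $ay<x$.

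\emph{Case $k_1=0$.} \ Here $\alpha_{\mathbf{k}+\varepsilon_2}=\alpha_{(0,k_2+1)}=a$ and $\alpha_{\mathbf{k}+\varepsilon_1}=\alpha_{(1,k_2)}=1$. \ Next, $\beta_{\mathbf{k}+\varepsilon_2}=\beta_{(0,k_2+1)}=1$ and $\beta_{\mathbf{k}+\varepsilon_1}=\beta_{(1,k_2)}$, which equals $ay/x$ if $k_2=0$ and $1$ if $k_2\ge 1$. \ The inequality therefore reads $a\cdot ay/x \le 1$ when $k_2=0$ and $a\le 1$ when $k_2\ge1$. \ Both hold because $a<1$ and $ay/x<1$.

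There is no real obstacle. \ The only point requiring care is the bookkeeping of which weights in the diagram lie on the axes, in particular $\beta_{(0,j)}=1$ for $j\ge1$ as shown in Figure 1(ii); the definition literally writes "$\beta_{(i,j)}=1$ for $i\ge1$", but the diagram shows $\beta_{(0,j)}=1$ for $j\ge1$ as well. \ As a sanity check, the same conclusion can be phrased at the level of moments via (\ref{con3}), namely $\gamma_{\mathbf{k}+\varepsilon_1+\varepsilon_2}^2\le\gamma_{\mathbf{k}+2\varepsilon_1}\gamma_{\mathbf{k}+2\varepsilon_2}$. \ At $\mathbf{k}=(0,0)$ this reads $(xay/x)^2=a^2y^2 \le x^2\cdot y^2$, which is equivalent to $a\le x$. \ That would \emph{not} be automatic, so we will recheck this against the weight form before concluding. \ The weight inequality at $\mathbf{k}=(0,0)$ gives $a^2y/x\le 1$, whereas the moment form gives $a\le x$; one of the two computations must be in error. \ The fault lies in the moment form: recomputing, $\gamma_{(1,1)}=x^2(ay/x)^2=a^2y^2$ and $\gamma_{(2,0)}\gamma_{(0,2)}=x^2y^2$. \ Since the moment translation in the paper shifted indices by $\varepsilon_2$, the weight form (\ref{con12}) is the version we will rely on, and I will present the case analysis in that form.
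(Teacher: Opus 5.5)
Your main argument is correct. With the weights read from Figure 1(ii), the criterion $\alpha_{\mathbf{k}+\varepsilon_2}\beta_{\mathbf{k}+\varepsilon_1}\le\alpha_{\mathbf{k}+\varepsilon_1}\beta_{\mathbf{k}+\varepsilon_2}$ reduces to three checks, all of which follow from $a<1$ and $ay<x$:
\begin{itemize}
\item $\beta_{\mathbf{k}+\varepsilon_1}\le 1$ when $k_1\ge 1$;
\item $a\le 1$ at $(0,k_2)$ with $k_2\ge 1$;
\item $a^2y\le x$ at $\mathbf{k}=(0,0)$.
\end{itemize}
You are right that the intended convention is $\beta_{(i,j)}=1$ for all $i\ge 0$, $j\ge 1$.

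The paper gets there slightly differently. It observes that the restrictions of $W_{(\alpha,\beta)}(a,x,y)$ to $\mathcal{M}$ and $\mathcal{N}$ are subnormal. This takes care of every $2\times 2$ block of $L$ at the levels $\mathcal{K}(n)$, $n\ge 2$. It then inspects only $L|_{\mathcal{K}(1)}$ from Theorem \ref{Thm6}. Its one nontrivial block, with diagonal entries $1,1$ and off-diagonal entry $a^2y/x$, is exactly your case $\mathbf{k}=(0,0)$. Your lattice-point check is more elementary and self-contained: it needs neither the subnormality of the restrictions nor the observation that their hyponormality controls the higher-level blocks. The paper's reduction is more economical when the weights off the axes come from a known subnormal shift rather than being explicitly constant.

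You should, however, delete your closing ``sanity check'', because its diagnosis is wrong: there is no error in the paper's moment form. Moments are products of \emph{squared} weights, so $\gamma_{(1,1)}=\alpha_{(0,0)}^2\beta_{(1,0)}^2=a^2y^2$ and $\gamma_{(1,1)}^2=a^4y^4$. Your first computation used $\gamma_{(1,1)}=x\cdot ay/x=ay$. In the recomputation you compared $\gamma_{(1,1)}$, rather than $\gamma_{(1,1)}^2$, with $\gamma_{(2,0)}\gamma_{(0,2)}=x^2y^2$. The correct inequality $a^4y^4\le x^2y^2$ is equivalent to $a^2y\le x$, which is precisely your weight condition at $\mathbf{k}=(0,0)$. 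The paper's index shift by $\varepsilon_2$ is also fine. Indeed, $a\le x$ could not have been equivalent to $L\ge 0$: it fails at admissible parameters such as $(a,x,y)=(0.9,0.5,0.1)$, where your own weight check gives $L\ge 0$. That mismatch should have signalled an arithmetic slip rather than an error in the paper.
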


\begin{proof}
An inspection of the associated weight diagram reveals that the restrictions $W_{(\alpha,\beta)}(a,x,y)|_{\mathcal{M}}$ and $W_{(\alpha,\beta)}(a,x,y)|_{\mathcal{N}}$ are subnormal. \ Therefore, $L \ge 0$ if and only if the restriction of $L$ to the reducing subspace $\mathcal{K}(1)$ is positive, that is, if and only if
$$
\left(
\begin{array}{cccc}
a^2 & 0 & 0 & 0 \\
0 & 1 & \dfrac{a^2y}{x} & 0 \\
0 & \dfrac{a^2y}{x} & 1 & 0 \\
0 & 0 & 0 & \dfrac{a^2y^2}{x^2} 
\end{array}
\right).
$$
Since this matrix is positive, it follows that $L \ge 0$.  
\end{proof}


\subsection{Regions of Hyponormality and Subnormality} \label{Sub36}

From \cite[Proposition 2.10]{CuYo1}, we recall that $W_{(\alpha,\beta)}(a,x,y)$ is hyponormal if and only if 
\begin{equation} \label{hypotest}
y \le x \sqrt{\dfrac{1-x^2}{x^2-2a^2x^2+a^4}} = x \sqrt{\dfrac{1-x^2}{x^2(1-x^2)+(x^2-a^2)^2}}.
\end{equation}
On the other hand, $W_{(\alpha,\beta)}(a,x,y)$ is subnormal if and only if 
\begin{equation} \label{subntest}
y \le \sqrt{\dfrac{1-x^2}{1-a^2}} \quad (\textrm{by \cite[Proposition 2.11]{CuYo1}}).
\end{equation}
As a result, $W_{(\alpha,\beta)}(a,x,y)$ is hyponormal and {\it not subnormal} whenever
$$
\sqrt{\dfrac{1-x^2}{1-a^2}} < y \le x \sqrt{\dfrac{1-x^2}{x^2-2a^2x^2+a^4}} \quad (\textrm{by \cite[Theorem 2.12]{CuYo1}}),
$$
as visualized in Figure \ref{newfig} (cf. \cite[Figure 4]{CuYo1}). \ (To ensure that $T_2$ is hyponormal, we must require $\dfrac{ay}{x} \le 1$, i.e., $y \le \dfrac{x}{a}$.) 

\setlength{\unitlength}{0.8mm} \psset{unit=0.8mm}

\begin{figure}[th]
\begin{center}
\begin{picture}(120,76)

\rput(-20,35){
\pscustom[linewidth=0pt,fillstyle=solid,fillcolor=cyan]{
\pscurve(42,50.4)(47,48)(53,10)}
\pscustom[linewidth=0pt,fillstyle=solid,fillcolor=white]{
\pscurve(42,50.4)(48,38)(53,10)}

\psline{->}(10,10)(10,77)
\psline{->}(10,10)(62,75)
\psline[linewidth=2pt](10,10)(42,50)

\pscurve[linestyle=dashed,dash=3pt 2pt](10,71)(20,68)(42,50.4)(48,38)(53,10)
\pscurve[linestyle=dashed,dash=3pt 2pt](10,10)(28,40)(42,50.4)(47,48)(53,10)

\psline[linewidth=2pt]{<-}(46.9,44)(60,50)
\psline[linewidth=2pt]{<-}(40,34)(60,40)
\put(61,49){$\mathbf{T}$ is hyponormal but not subnormal in this region}
\put(61,39){$\mathbf{T}$ is subnormal in this region}
\psline[linewidth=2pt]{<-}(25,64.5)(27,74.5)
\put(24,79){$y={\sqrt{\frac{1-x^2}{1-a^2}}}$}
\psline[linewidth=2pt]{<-}(53.3,24)(63.3,27)
\put(65,27){$y=x{\sqrt{\frac{1-x^2}{x^2+a^4-2a^2x^2}}}$}
\put(6,6){\footnotesize{$(0,0)$}}
\put(40,5){\footnotesize{$(a,0)$}}
\put(42.5,10){\pscircle*(0,0){1}}
\put(1,41){\footnotesize{$(0,a)$}}
\put(10,42.5){\pscircle*(0,0){1}}
\put(52,10){\pscircle*(1,0){1}}
\put(51,5){\footnotesize{$(1,0)$}}
\put(63,75){$y={\frac{x}{a}}$}
\put(7,79){$y$}

\psline{->}(10,10)(75,10)
\psline[linewidth=2pt](10,10)(53,10)
\put(1,50){\footnotesize{$(0,1)$}}
\put(10,50.4){\pscircle*(0,0){1}}
\put(-11,70){\footnotesize{$(0,\sqrt{\frac{1}{1-a^2}})$}}

\put(77,7){$x$}
}

\end{picture}
\end{center}
\caption{Regions of hyponormality and subnormality for $W_{(\alpha,\beta)}(a,x,y)$.}
\label{newfig}
\end{figure}

\begin{remark}
The necessary and sufficient conditions for the hyponormality (\ref{hypotest}) and subnormality (\ref{subntest}) can be rewritten with $a$ in terms of $x$ and $y$, as follows:

(i) \ $W_{(\alpha ,\beta )}(a,x,y)$ is jointly hyponormal if and only if
\begin{equation}
\begin{tabular}{l}
$\dfrac{x^{2}y-x\sqrt{\left( 1-x^{2}\right) \left( 1-y^{2}\right) }}{y}\leq
a^{2}\leq \dfrac{x^{2}y+x\sqrt{\left( 1-x^{2}\right) \left( 1-y^{2}\right) }}{%
y}$.%
\end{tabular}
\label{e1}
\end{equation}
(ii) \ $W_{(\alpha ,\beta )}(a,x,y)$ is jointly subnormal if and only if
\begin{equation}
\begin{tabular}{l}
$\dfrac{x^2+y^2-1}{y^2} \le a^2$.
\end{tabular}
\label{e2}
\end{equation}
In particular, observe that $W_{(\alpha,\beta)}(a,x,y)$ is always subnormal whenever $x^2+y^2<1$ and $ay < x$. 

These formulations will help us compare them to the necessary and sufficient condition for semi-hyponormality, which we now describe. 
\end{remark}

\subsection{Region of Semi-hyponormality} \label{Sub37}

\begin{theorem}
$W_{(\alpha,\beta)}(a,x,y)$ is jointly semi-hyponormal if and only if
\begin{equation}
\begin{tabular}{l}
$a^{2}\leq \dfrac{x\left(
4x^{2}-x^{4}+4x^{3}y+4y^{2}-6x^{2}y^{2}+4xy^{3}-y^{3}\right) }{4y\left(
x^{2}+y^{2}\right) }$ \\
\!\!\!\!\!\!\!\!\!\!\!\!and \\
 $\sqrt{\dfrac{\left( x^{2}+y^{2}\right) \left(
x+a^{2}y\right) }{x}}\geq \dfrac{\left( x+y\right) ^{2}}{2}+\sqrt{\dfrac{
\left( x+a^{2}y\right) \left( x-y\right) ^{4}}{4\left( x-a^{2}y\right) }}.$
\end{tabular}
\label{e3}
\end{equation}
\end{theorem}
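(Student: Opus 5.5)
By Theorem \ref{Thm6}, both $L$ and $R$ decompose along the reducing subspaces $\mathcal{K}(n)$ into direct sums of $1\times 1$ and $2\times 2$ blocks, with matching block positions. Since square roots respect orthogonal direct sums, $\sqrt{L}\ge\sqrt{R}$ holds if and only if it holds blockwise on each $\mathcal{K}(n)$. Positivity of $L$ is already given by the preceding lemma.

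\textbf{Step 1: the case $n\ge 2$.} Here I check which weights the $2\times 2$ blocks of $L|_{\mathcal{K}(n)}$ and $R|_{\mathcal{K}(n)}$ use.
\begin{itemize}
\item For $n=2$, the block $i=1$ involves only $\alpha_{(1,1)},\alpha_{(0,2)},\beta_{(1,1)},\beta_{(0,2)}$ (for $L$) and $\alpha_{(0,1)},\beta_{(0,1)}$ (for $R$). These are weights of $W_{(\alpha,\beta)}(a,x,y)|_{\mathcal{M}}$.
\item The block $i=2$ involves only weights of $W_{(\alpha,\beta)}(a,x,y)|_{\mathcal{N}}$.
\item The same pattern persists for all $n\ge 2$: every block is a block of the corresponding decomposition for one of these two restrictions.
\end{itemize}
Both restrictions are subnormal, hence hyponormal, so each block satisfies $L\ge R$. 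By the L\"owner--Heinz inequality (Lemma \ref{lemma2}) with exponent $\frac12$, the square roots are ordered on every $\mathcal{K}(n)$ with $n\ge 2$.

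The case $n=0$ is trivial, since $R|_{\mathcal{K}(0)}=0$. On $\mathcal{K}(1)$ the $1\times1$ blocks of $L$ are $a^2$ and $a^2y^2/x^2$, against $0$ for $R$, so they are also harmless.

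\textbf{Step 2: reduction to one $2\times 2$ inequality.} Semi-hyponormality is therefore equivalent to
$$
\sqrt{A}\ \ge\ \sqrt{B},\qquad A=\left(\begin{array}{cc}1&c\\ c&1\end{array}\right),\quad c=\frac{a^2y}{x},\qquad B=\left(\begin{array}{cc}x^2&xy\\ xy&y^2\end{array}\right).
$$
Since $ay<x$ and $a<1$, we have $c<1$, so $A$ is invertible. I compute both square roots by formula (\ref{squareroot2}):
$$
\sqrt{A}=\frac{A+\sqrt{1-c^2}\,I}{\sqrt{2+2\sqrt{1-c^2}}}.
$$
Equivalently, $\sqrt{A}$ has eigenvalues $\sqrt{1\pm c}$ with eigenvectors $(1,\pm1)$. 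Because $B$ is singular, its square root is $\sqrt{B}=B/\sqrt{x^2+y^2}$.

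\textbf{Step 3: the positivity test.} The difference $S:=\sqrt{A}-\sqrt{B}$ is a real symmetric $2\times2$ matrix, so $S\ge 0$ if and only if $\tr S\ge 0$ and $\det S\ge 0$. I will write both conditions explicitly, using $1\pm c=(x\pm a^2y)/x$. This accounts for the factors $x+a^2y$ and $x-a^2y$ in (\ref{e3}). Concretely:
\begin{itemize}
\item $\det S$ expands as $\det\sqrt{A}-\langle \operatorname{adj}(\sqrt{A})\,v,v\rangle/\sqrt{x^2+y^2}$, where $v=(x,y)$.
\item $\tr S=\sqrt{1+c}+\sqrt{1-c}-\sqrt{x^2+y^2}$.
\end{itemize}
After clearing radicals, I expect these two inequalities to become the two displayed conditions in (\ref{e3}). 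The first should be a polynomial bound on $a^2$, which arises from the trace (or from a squared form of the determinant). The second, containing $\sqrt{(x+a^2y)(x-y)^4/(4(x-a^2y))}$, should come from the determinant written in the eigenbasis $(1,\pm1)$. There the components $(x\pm y)/\sqrt2$ of $v$ are weighted by $1/\sqrt{1\pm c}$.

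\textbf{Main obstacle.} Step 3 is the hard part. Both directions of the equivalence require squaring inequalities, so I must track signs to keep them reversible, and then match the result with the stated rational forms. If the direct route becomes unwieldy, I would use the rank-one criterion instead. Since $\sqrt{B}=ww^{T}$ with $w=v/(x^2+y^2)^{1/4}$ and $\sqrt{A}$ is invertible, $\sqrt{A}\ge ww^{T}$ if and only if $w^{T}(\sqrt{A})^{-1}w\le 1$. In the eigenbasis this reads
$$
\frac{(x+y)^2}{2\sqrt{1+c}}+\frac{(x-y)^2}{2\sqrt{1-c}}\le\sqrt{x^2+y^2}.
$$
I would then rearrange and square this single inequality to reach (\ref{e3}), checking along the way that the first condition in (\ref{e3}) is exactly the sign condition needed for that squaring to be an equivalence.
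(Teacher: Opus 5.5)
Your reduction in Steps 1--2 is the paper's: everything is pushed onto the single $2\times 2$ block on $\mathcal{K}(1)$. Your block-by-block check for $n\ge 2$ is more explicit than the paper, which only invokes subnormality of the restrictions to $\mathcal{M}$ and $\mathcal{N}$.

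The routes diverge in Step 3. The paper conjugates by $U$ (eigenvectors $(1,\pm 1)$ of $A$), multiplies by $\sqrt{x^2+y^2}$, and tests
\[
\left(\begin{array}{cc} \rho_- - \mu & \frac{x^2-y^2}{2} \\ \frac{x^2-y^2}{2} & \rho_+ - \nu \end{array}\right)\ge 0,
\qquad \rho_\pm:=\sqrt{(x^2+y^2)(1\pm c)},\quad \mu:=\frac{(x-y)^2}{2},\quad \nu:=\frac{(x+y)^2}{2}.
\]
It uses two facts:
\begin{itemize}
\item the $(1,1)$ entry, squared, gives the first displayed condition;
\item since $\frac{(x^2-y^2)^2}{4}=\mu\nu$, the determinant equals $\rho_+\rho_- - \nu\rho_- - \mu\rho_+$, and its nonnegativity is the second condition.
\end{itemize}
Your rank-one criterion $w^{T}(\sqrt A)^{-1}w\le 1$ reads $\frac{\nu}{\rho_+}+\frac{\mu}{\rho_-}\le 1$. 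This is the same determinant condition, and your route is valid, but it shows more clearly what is going on.

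Two of your expectations about Step 3 are wrong, though.
\begin{itemize}
\item \emph{No squaring is needed for the second condition.} Multiply your inequality by $\sqrt{1+c}=\sqrt{(x+a^2y)/x}$ and use $\frac{(x-y)^2}{2}\sqrt{\frac{1+c}{1-c}}=\sqrt{\frac{(x+a^2y)(x-y)^4}{4(x-a^2y)}}$. This gives the second displayed inequality verbatim.
\item \emph{The first condition comes neither from the trace nor from a sign requirement for squaring.} Discard the nonnegative term $\frac{(x+y)^2}{2\sqrt{1+c}}$ from your inequality. What remains is $\mu\le\rho_-$, whose square is the first condition.
\end{itemize}
So the first condition is implied by the second and is redundant. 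Your trace condition is equally redundant: $\sqrt A-ww^{T}$ with $\sqrt A>0$ has at most one negative eigenvalue, so $\det S\ge 0$ alone characterizes $S\ge 0$. Hence ``$\Leftarrow$'' uses only the second condition, and ``$\Rightarrow$'' obtains the first from the remark above.

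When you carry this out, the bound you get is
\[
a^2\le \frac{x\,[4(x^2+y^2)-(x-y)^4]}{4y(x^2+y^2)},
\]
whose numerator ends in $-y^4$, not the printed $-y^3$. The printed bound is strictly smaller, and with it the ``only if'' direction fails. Take $x=y=\frac12$ and $a=0.99$:
\begin{itemize}
\item $ay<x$ holds;
\item $\sqrt A$ and $\sqrt B$ share the eigenvectors $(1,\pm1)$, and $\sqrt A-\sqrt B$ has eigenvalues $\sqrt{1+c}-\frac{1}{\sqrt2}>0$ and $\sqrt{1-c}>0$, so the shift is semi-hyponormal;
\item yet $a^2=0.9801$ exceeds the printed bound $\frac{31}{32}$.
\end{itemize}
So do not try to match the printed polynomial; prove the $-y^4$ version.
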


\begin{proof}
We first observe that 
$$
L|_{\mathcal{K}(0)}=\left(
\begin{array}{ll}
x^{2} & 0 \\
0 & y^{2}
\end{array}
\right) \quad \textrm{ and } \quad R|_{\mathcal{K}(0)}=\left(
\begin{array}{ll}
0 & 0 \\
0 & 0
\end{array}
\right).
$$
Moreover, on the reducing subspace $\mathcal{K}(1)$, the relevant $2 \times 2$ summands that detect the positivity of $L$ and $R$ are
$$
\left(
\begin{array}{ll}
1 & \frac{a^{2}y}{x} \\
\frac{a^{2}y}{x} & 1
\end{array}
\right)
\quad \textrm{ and } \left(
\begin{array}{ll}
x^{2} & xy \\
xy & y^{2}
\end{array}
\right),
$$
respectively. \ Thus, we immediately get that $\sqrt{L|_{\mathcal{K}(0)}} \ge \sqrt{R|_{\mathcal{K}(0)}}$, and that 
$$
\sqrt{L|_{\mathcal{K}(1)}} \ge \sqrt{R|_{\mathcal{K}(1)}} 
\Longleftrightarrow 
\left(
\begin{array}{ll}
1 & \frac{a^{2}y}{x} \\
\frac{a^{2}y}{x} & 1%
\end{array}%
\right)^{\frac{1}{2}} \ge \left(
\begin{array}{ll}
x^{2} & xy \\
xy & y^{2}%
\end{array}
\right)^{\frac{1}{2}}.
$$
As a result, 
$$
W_{(\alpha,\beta)}(a,x,y) \textrm{ is jointly semi-hyponormal } \Longleftrightarrow \left(
\begin{array}{ll}
1 & \frac{a^{2}y}{x} \\
\frac{a^{2}y}{x} & 1%
\end{array}%
\right) ^{\frac{1}{2}}\geq \left(
\begin{array}{ll}
x^{2} & xy \\
xy & y^{2}%
\end{array}%
\right) ^{\frac{1}{2}}.
$$
To calculate the first square root, we resort to the orthogonal diagonalization. \ If we let \linebreak $U:=\left(
\begin{array}{cc}
-\frac{1}{\sqrt{2}} & \frac{1}{\sqrt{2}} \\
\frac{1}{\sqrt{2}} & \frac{1}{\sqrt{2}}
\end{array}
\right)$, it is straightforward to verify that 
\begin{equation*}
UL|_{\mathcal{K}(1)}U^*=\left(
\begin{array}{cc}
\frac{x-a^{2}y}{x} & 0 \\
0 & \frac{x+a^{2}y}{x}
\end{array}
\right) \quad \textrm{ and } \quad UR|_{\mathcal{K}(1)}U^*=\left(
\begin{array}{lc}
\frac{\left( x-y\right) ^{2}}{2} & \frac{y^{2}-x^{2}}{2} \\
\frac{y^{2}-x^{2}}{2} & \multicolumn{1}{l}{\frac{\left( x+y\right) ^{2}}{2}}
\end{array}
\right).
\end{equation*}
It follows that 
\begin{equation*}
U(L|_{\mathcal{K}(1)})^{\frac{1}{2}}U^* = \left(
\begin{array}{cc}
\sqrt{\frac{x-a^{2}y}{x}} & 0 \\
0 & \sqrt{\frac{x+a^{2}y}{x}}%
\end{array}%
\right) \quad \textrm{ and } \quad U(R|_{\mathcal{K}(1)})^{\frac{1}{2}}U^* = \frac{1}{\sqrt{x^{2}+y^{2}}}\left(
\begin{array}{cc}
\frac{\left( x-y\right) ^{2}}{2} & \frac{y^{2}-x^{2}}{2} \\
\frac{y^{2}-x^{2}}{2} & \multicolumn{1}{l}{\frac{\left( x+y\right) ^{2}}{2}}%
\end{array}
\right).
\end{equation*}
A simple calculation now shows that
\begin{equation*}
\begin{tabular}{l}
$(L|_{\mathcal{K}(1)})^{\frac{1}{2}}-(R|_{\mathcal{K}(1)})^{\frac{1}{2}}\geq 0$ \\
\\
$\Longleftrightarrow \operatorname{SH}(a,x,y):=\left(
\begin{array}{cc}
\sqrt{\dfrac{\left( x^{2}+y^{2}\right) \left( x-a^{2}y\right) }{x}}-\dfrac{%
\left( x-y\right) ^{2}}{2} & -\left( \dfrac{y^{2}-x^{2}}{2}\right)  \\
-\left( \dfrac{y^{2}-x^{2}}{2}\right)  & \sqrt{\dfrac{\left(
x^{2}+y^{2}\right) \left( x+a^{2}y\right) }{x}}-\frac{\left( x+y\right) ^{2}%
}{2}%
\end{array}%
\right) \geq 0$ \\
\\
$\Longleftrightarrow \sqrt{\dfrac{\left( x^{2}+y^{2}\right) \left(
x-a^{2}y\right) }{x}}-\dfrac{\left( x-y\right) ^{2}}{2}\geq 0 \quad$ and $\quad \det(SH(a,x,y))\geq 0$ \\
\\
$\Longleftrightarrow a^{2}\leq \dfrac{x\left(
4x^{2}-x^{4}+4x^{3}y+4y^{2}-6x^{2}y^{2}+4xy^{3}-y^{3}\right) }{4y\left(
x^{2}+y^{2}\right) }$ \quad and \\
\\
\hspace{20pt} $\sqrt{\dfrac{\left( x^{2}+y^{2}\right) \left(
x+a^{2}y\right) }{x}}\geq \dfrac{\left( x+y\right) ^{2}}{2}+\sqrt{\dfrac{%
\left( x+a^{2}y\right) \left( x-y\right) ^{4}}{4\left( x-a^{2}y\right) }}.$
\end{tabular}
\end{equation*}
\end{proof}


\subsection{Region of Weak Hyponormality} \label{Sub38}

Without loss of generality, we may assume that both $T_1$ and $T_2$ are hyponormal. \ As is well-known, the weak hyponormality of the commuting pair $(T_1,T_2)$ is controlled by the positivity of the self-commutators $C(\lambda):=[(T_1+\lambda T_2)^*,T_1+\lambda T_2]$, where $\lambda$ is a arbitrary complex number. \ Using the homogeneous orthogonal decomposition of $\ell^2(\mathbb{Z}_+^2)$, and the fact that each of the subspaces $\mathcal{K}(n)$ ($n \ge 0$) is reducing for $C(\lambda)$, it follows that $C(\lambda) \ge 0$ if and only if $C(\lambda)|_{\mathcal{K}(n)}$. \ Now, on $\mathcal{K}(0)$, 
$$
C(\lambda)e_{(0,0)} = (\alpha_{(0,0)} + \left|\lambda\right|^2 \beta_{(0,0)}) \, e_{(0,0)} \ge 0.
$$
while for $n > 1$, $C(\lambda)|_{\mathcal{K}(n)}$ is subnormal, and therefore weakly hyponormal. \ Thus, we need to consider in detail the restriction of $C(\lambda)$ to the reducing subspace $\mathcal{K}(1)$. \ 

For a fixed $\lambda \in \mathbb{C}$, consider the action of $C(\lambda)$ on a generic vector of $\mathcal{K}(1)$; that is,
\begin{equation*}
\left[ \left( T_{1}+\lambda T_{2}\right) ^{\ast },T_{1}+\lambda T_{2}\right]
\left( r \cdot e_{\left( 0,1\right) }+s \cdot e_{\left( 1,0\right) }\right) \quad \quad (r,s \in \mathbb{C}).
\end{equation*}
For $T_{1}+\lambda T_{2}$ to be hyponormal,
we need
\begin{equation*}
\left\langle \left[ \left( T_{1}+\lambda T_{2}\right) ^{\ast },T_{1}+\lambda
T_{2}\right] \left( r \cdot e_{\left( 0,1\right) }+s \cdot e_{\left( 1,0\right) }\right)
,\left( r \cdot e_{\left( 0,1\right) }+s \cdot e_{\left( 1,0\right) }\right) \right\rangle
\geq 0.
\end{equation*}
A simple calculation shows that this is equivalent to%
\begin{equation*}
\begin{tabular}{l}
$\left\langle \left[ T_{1}^{\ast },T_{1}\right] \left( r \cdot e_{\left( 0,1\right)
}+s \cdot e_{\left( 1,0\right) }\right) ,\left(r \cdot e_{\left( 0,1\right) }+s \cdot e_{\left(
1,0\right) }\right) \right\rangle $ \\
$+\lambda \left\langle \left[ T_{1}^{\ast },T_{2}\right] \left( r \cdot e_{\left(
0,1\right) }+s \cdot e_{\left( 1,0\right) }\right) ,\left( r \cdot e_{\left( 0,1\right)
}+s \cdot e_{\left( 1,0\right) }\right) \right\rangle $ \\
$+\overline{\lambda }\left\langle \left[ T_{2}^{\ast },T_{1}\right] \left(
r \cdot e_{\left( 0,1\right) }+s \cdot e_{\left( 1,0\right) }\right) ,\left( r \cdot e_{\left(
0,1\right) }+s \cdot e_{\left( 1,0\right) }\right) \right\rangle $ \\
$+\lambda \overline{\lambda }\left\langle \left[ T_{2}^{\ast },T_{2}\right]
\left( r \cdot e_{\left( 0,1\right) }+s \cdot e_{\left( 1,0\right) }\right) ,\left(
r \cdot e_{\left( 0,1\right) }+s \cdot e_{\left( 1,0\right) }\right) \right\rangle \geq 0.$%
\end{tabular}%
\end{equation*}%
Now recall that
$$
\begin{array}{rclrcl}
\left[ T_{1}^{\ast },T_{1}\right] e_{\left( 0,1\right) }&=&a^{2}e_{\left(
0,1\right) }\,; & \left[ T_{1}^{\ast },T_{1}\right] e_{\left( 1,0\right)
}&=&\left( 1-x^{2}\right) e_{\left( 1,0\right) }\,; \\
\left[ T_{2}^{\ast },T_{1}\right] e_{\left( 0,1\right) }&=&\left( \frac{a^{2}y%
}{x}-xy\right) e_{\left( 1,0\right) }\,; & \quad \left[ T_{2}^{\ast },T_{1}\right]
e_{\left( 1,0\right) }&=&0\, ; \\
\left[ T_{1}^{\ast },T_{2}\right] e_{\left(
0,1\right) }&=&0 \,; & \left[ T_{1}^{\ast },T_{2}\right] e_{\left( 1,0\right) }&=&\left( \frac{a^{2}y%
}{x}-xy\right) e_{\left( 0,1\right) }\,; \\
\left[ T_{2}^{\ast },T_{2}\right]
e_{\left( 0,1\right) }&=&\left( 1-y^{2}\right) e_{\left( 0,1\right) }\,; & \left[
T_{2}^{\ast },T_{2}\right] e_{\left( 1,0\right) }&=&\frac{a^{2}y^{2}}{x^{2}}e_{\left( 1,0\right) }\,.
\end{array}
$$
A calculation now shows that
\begin{equation*}
\begin{tabular}{l}
$\left\langle \left[ \left( T_{1}+\lambda T_{2}\right) ^{\ast
},T_{1}+\lambda T_{2}\right] \left( r \cdot e_{\left( 0,1\right) }+s \cdot e_{\left(
1,0\right) }\right) ,\left( r \cdot e_{\left( 0,1\right) }+s \cdot e_{\left( 1,0\right)
}\right) \right\rangle $ \\
$=(\left\vert r\right\vert ^{2}\left( 1-y^{2}\right) +\left\vert
s\right\vert ^{2}\frac{a^{2}y^{2}}{x^{2}})\left\vert \lambda \right\vert
^{2}+2\re(\overline{r}s\lambda (\frac{a^{2}y}{x}-xy))+\left\vert
r\right\vert ^{2}a^{2}+\left( 1-x^{2}\right) \left\vert s\right\vert ^{2}.$%
\end{tabular}
\end{equation*}
This is a complex quadratic form: 
\begin{equation} \label{quadraticform}
A(r,s)\left\vert \lambda \right\vert ^{2}+2\re(\lambda B\left( r,s\right))
+C\left( r,s\right) ,
\end{equation}
where 
$$
A\left( r,s\right) :=\left\vert r\right\vert ^{2}\left( 1-y^{2}\right)
+\left\vert s\right\vert ^{2}\frac{a^{2}y^{2}}{x^{2}}), 
$$
$$
B\left( r,s\right)
:=\overline{r}s(\frac{a^{2}y}{x}-xy),
$$
and
$$
C\left( r,s\right) :=\left\vert r\right\vert ^{2}a^{2}+\left( 1-x^{2}\right) \left\vert s\right\vert ^{2}.
$$
Since $A(r,s)$ and $C(r,s)$ are nonnegative for all values of $r$ and $s$, it readily follows that the nonnegativity of the quadratic form (\ref{quadraticform}) is completely determined by the non-positivity of its discriminant. \ To focus on nonnegativity, we consider minus the discriminant. \ We thus have, for all $r,s\in \mathbb{C}$ and a fixed $\lambda \in \mathbb{C}$,
$$
A\left( r,s\right) \left\vert \lambda \right\vert ^{2}+2\operatorname{Re}\left(
\lambda B\left( r,s\right) \right) +C\left( r,s\right) \geq 0 \Longleftrightarrow A\left(r,s\right) C\left( r,s\right) - \left\vert B\left( r,s\right) \right\vert ^{2} \ge 0.
$$
Therefore, $W_{(\alpha ,\beta)}(a,x,y)$ is weakly hyponormal if and only if 
\begin{equation}
\left( \left\vert r\right\vert ^{2}\left(
1-y^{2}\right) +\left\vert s\right\vert ^{2}\frac{a^{2}y^{2}}{x^{2}})\right) \left( \left\vert r\right\vert ^{2}a^{2}+\left( 1-x^{2}\right) \left\vert
s\right\vert ^{2}\right)
-\left\vert \overline{r}s\right\vert ^{2}\left( \frac{a^{2}y}{x}-xy\right)
^{2}\geq 0\text{.}  \label{imp}
\end{equation}
Observe that when $r$ or $s$ is equal to zero, $B(r,s)$ is zero, and hence the quadratic form is nonnegative. \ As a result, and without loss of generality, we may assume that $r \ne 0$ and $s \ne 0$. \ Let $p:=\left\vert r\right\vert$, $q:=\left\vert s\right\vert$, and $t:=\dfrac{p}{q}$. \ A straightforward calculation using {\it Mathematica} \cite{Wol} shows that (\ref{quadraticform}) is nonnegative if and only if
\begin{equation}
\left( \left( 1-y^{2}\right) a^{2}x^{2}\right) t^{4}+\left(
1-x^{2}-y^{2}+2a^{2}y^{2}\right) x^{2} t^{2}+a^{2}y^{2}\left(
1-x^{2}\right) \geq 0  \label{cal9}
\end{equation}
for all $t>0$. \ Since this is a bi-quadratic in $t$, and the free term
and the coefficient of $t^{4}$ are positive, we consider the sign of the
coefficient in $t^{2}$. \ Let
\begin{equation*}
f(a,x,y) :=\left( 1-x^{2}-y^{2}+2a^{2}y^{2}\right) x^{2}=\left(
1-x^{2}\right) x^{2}+\left( -1+2a^{2}\right) x^{2}y^{2}.
\end{equation*}%
If $f(a,x,y)$ is nonnegative, then the bi-quadratic is nonnegative (since $t^{2}$ is always nonnegative). \ If $f(a,x,y)$ is negative,
then we need to appeal to the discriminant of the bi-quadratic. \ Thus, we need to focus attention on the region 
$$
R_f := \{(a,x,y) \in (0,1)^3: f(a,x,y) < 0 \},
$$
and study the sign of the bi-quadratic’s discriminant in $R_f$. \ If $\sqrt{\frac{1}{2}}\leq a < 1$, then $f(a,x,y)>0$ for all $0<x,y<1$, so weak hyponormality is guaranteed. \ Thus, the relevant region under consideration is $R_f \bigcap \{(a,x,y) \in (0,1)^3: a<\sqrt{\frac{1}{2}} \}$. \ \newline
On the other hand, when $a < \sqrt{\frac{1}{2}}$, note that $f\left( a,x,y\right) \geq 0$ if and only if 
$y\leq \sqrt{\frac{1-x^{2}}{1-2a^{2}}}$. \ As a consequence, we will further restrict the domain of $f$ to those points $(a,x,y)$ with $0<a<\sqrt{\frac{1}{2}}$ and $y>\sqrt{\frac{1-x^{2}}{1-2a^{2}}}$. \ For later use, we record here the fact that in this region, a point $(x,y)$ is automatically outside the closed unit disk; for, 
$$
y > \sqrt{\frac{1-x^{2}}{1-2a^{2}}} \Longrightarrow y > \sqrt{1-x^2} \Longrightarrow x^2+y^2 > 1.
$$
We also recall that the weight diagram of $W_{(\alpha,\beta)}(a,x,y)$ requires $ay < x$, and therefore, by combining this condition and $y>\sqrt{\frac{1-x^{2}}{1-2a^{2}}}$, we obtain a necessary condition for x; that is, $x > ay > a \sqrt{\frac{1-x^{2}}{1-2a^{2}}}$, from which it follows that $x^2(1-2a^2)>a^2(1-x^2)$, so that $x^2(1-a^2)>a^2$, and therefore $x>\frac{a}{\sqrt{1-a^2}}$. \ We conclude that the region of interest is 
$$
\mathfrak{R} := \left\{\, (a,x,y): \; a < \sqrt{\frac{1}{2}}, \; \; x>\frac{a}{\sqrt{1-a^2}}, \; \textrm{ and } \; \frac{x}{a}> y>\sqrt{\frac{1-x^{2}}{1-2a^{2}}} \; \right\}.
$$
Outside of this region, weak hyponormality is guaranteed. \ 

We are now ready to study the bi-quadratic's discriminant. \ Since we aim for a non-positive discriminant, we will consider the additive inverse of the discriminant, which we will denote as $\operatorname{minusbiqdiscr}$. \ From (\ref{cal9}), we see that
$$
\operatorname{minusbiqdiscr} := 4a^4x^2y^2(1-y^{2})(1-x^{2})-x^4(1-x^{2}-y^{2}+2a^{2}y^{2})^2.
$$
A straightforward calculation using {\it Mathematica} \cite{Wol} shows that 
$$
\operatorname{minusbiqdiscr} = -x^2 (-1 + x^2 + y^2) (-x^2 + x^4 + 4 a^4 y^2 + x^2 y^2 - 4 a^2 x^2 y^2).
$$
Now recall that the region of interest excludes the closed unit disk. \ Therefore, 
$$
\operatorname{minusbiqdiscr} \ge 0 \Longleftrightarrow g(a,x,y) := -x^2 + x^4 + 4 a^4 y^2 + x^2 y^2 - 4 a^2 x^2 y^2 \le 0.
$$
We now observe that $g(a,x,y)=Py^2-Q$, where 
$$
P:=x^2(1-x^2)+(2a^2-x^2)^2 \quad \textrm{ and } \quad Q:=x^2(1-x^2).
$$
It follows that 
$$
g(a,x,y) \le 0 \Longleftrightarrow y \le \operatorname{weakhyp}:=\sqrt{\dfrac{Q}{P}} = \dfrac{x \sqrt{1-x^2}}{\left|2a^2-x^2\right|}.
$$

\medskip
The preceding discussion serves as a proof of our main result of this subsection, which we now state.

\begin{theorem} \ Let $(a,x,y) \in \mathfrak{R}$. \ Then 
$$
W_{(\alpha,\beta)}(a,x,y) \textrm{ is weakly hyponormal } \Longleftrightarrow y \le \dfrac{x \sqrt{1-x^2}}{\left|2a^2-x^2\right|}.
$$
\end{theorem}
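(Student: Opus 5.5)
The plan follows the reduction already set up in Subsection \ref{Sub38}. Since each $\mathcal{K}(n)$ reduces $C(\lambda)=[(T_1+\lambda T_2)^*,T_1+\lambda T_2]$, and the restrictions of $W_{(\alpha,\beta)}(a,x,y)$ to $\mathcal{M}$ and $\mathcal{N}$ are subnormal, the case $n=0$ is trivial and $n\ge 2$ is automatic. So weak hyponormality is equivalent to $C(\lambda)|_{\mathcal{K}(1)}\ge 0$ for every $\lambda\in\mathbb{C}$. For $r e_{(0,1)}+s e_{(1,0)}$ this is the quadratic form (\ref{quadraticform}) in $\lambda$. Because $A(r,s),C(r,s)\ge 0$, the form is nonnegative for all $\lambda$ if and only if $A(r,s)C(r,s)\ge |B(r,s)|^2$, which is (\ref{imp}). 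Setting $t=|r|/|s|$ (the cases $r=0$ or $s=0$ being trivial) turns this into the biquadratic (\ref{cal9}), which must be nonnegative for all $t>0$.

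Next I use the elementary fact that $\alpha u^2+\beta u+\gamma$ with $\alpha,\gamma>0$ is nonnegative on $u=t^2>0$ if and only if either $\beta\ge 0$ or $\beta^2\le 4\alpha\gamma$. On $\mathfrak{R}$ we have $a<\sqrt{1/2}$ and $y>\sqrt{(1-x^2)/(1-2a^2)}$, so the middle coefficient $f(a,x,y)$ is strictly negative. Hence on $\mathfrak{R}$ weak hyponormality is exactly $\operatorname{minusbiqdiscr}\ge 0$, that is,
$$4a^4x^2y^2(1-x^2)(1-y^2)\ \ge\ x^4\,(1-x^2-y^2+2a^2y^2)^2 .$$

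I then expand and factor this as a polynomial in $y^2$. I would verify the factorization by hand rather than rely on the quoted one, because the final bound depends on it. The expected outcome is a factor $(x^2+y^2-1)$ times a second factor. The first factor is strictly positive on $\mathfrak{R}$, since there $x^2+y^2>1$, so it can be discarded. The remaining factor $g$ should give a linear inequality in $y^2$ of the form $y^2\cdot(\text{coefficient})\le x^2(1-x^2)$. Solving it, with a positive coefficient so the direction of the inequality is kept, should produce the threshold $y\le x\sqrt{1-x^2}/|2a^2-x^2|$. The absolute value enters by writing the coefficient as the square $(2a^2-x^2)^2$ and taking a square root; when $2a^2=x^2$ the condition holds vacuously, consistent with the bound being $+\infty$.

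The main obstacle is exactly this algebraic step. I must check that the coefficient of $y^2$ in the surviving factor collapses to $(2a^2-x^2)^2$, so that the threshold is $x\sqrt{1-x^2}/|2a^2-x^2|$ as stated. Carrying out the expansion by hand, I find that this coefficient comes out as $(2a^2-x^2)^2+x^2(1-x^2)$, in which case the threshold would instead be $x\sqrt{1-x^2}\big/\sqrt{(2a^2-x^2)^2+x^2(1-x^2)}$. The two expressions differ by the extra term $x^2(1-x^2)$, and I see no mechanism on $\mathfrak{R}$ that would make that term vanish. The step that would establish the stated bound is therefore where I expect the plan to break, and it should be treated as a failing step unless a cancellation I have missed turns up. I would track the signs and the $y^2$ terms separately, and cross-check the sign of the discriminant at a sample point of $\mathfrak{R}$ against both candidate thresholds to decide which one is correct.
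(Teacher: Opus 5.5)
Your reduction matches the paper's argument step for step. You restrict $C(\lambda)$ to $\mathcal{K}(1)$, take the discriminant in $\lambda$ to get (\ref{imp}), pass to the biquadratic (\ref{cal9}), use $f<0$ on $\mathfrak{R}$, and factor $\operatorname{minusbiqdiscr}=x^2(1-x^2-y^2)\,g(a,x,y)$; that factorization is correct.

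The step you flag is exactly where the paper's proof breaks, and your algebra is the right one. The paper itself writes $g=Py^2-Q$ with $P=x^2(1-x^2)+(2a^2-x^2)^2$ and $Q=x^2(1-x^2)$. It then asserts $\sqrt{Q/P}=x\sqrt{1-x^2}/\left|2a^2-x^2\right|$, silently dropping the summand $x^2(1-x^2)$ from $P$. There is no hidden cancellation. Since $x^2+y^2>1$ on $\mathfrak{R}$, weak hyponormality there is equivalent to $(4a^4-4a^2x^2+x^2)\,y^2\le x^2(1-x^2)$, that is,
\[
y\le \frac{x\sqrt{1-x^2}}{\sqrt{(2a^2-x^2)^2+x^2(1-x^2)}} .
\]
This is strictly smaller than the printed bound. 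So the statement as printed is false, and no argument can prove it.

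To settle the sample-point check you propose, take $(a,x,y)=(\frac12,\frac{9}{10},\frac{19}{20})$.
\begin{itemize}
\item It lies in $\mathfrak{R}$, since $a/\sqrt{1-a^2}\approx 0.577<x$ and $\sqrt{0.38}\approx 0.616<y<x/a=1.8$.
\item The printed bound is $0.9\sqrt{0.19}/0.31\approx 1.27>y$, so the theorem as stated would give weak hyponormality.
\item Here $P=\frac14$, so the correct threshold is $\sqrt{0.1539/0.25}\approx 0.785<y$.
\item Concretely, take $\lambda=\sqrt5$. The matrix $C(\lambda)|_{\mathcal{K}(1)}$ has diagonal entries $a^2+5(1-y^2)=0.7375$ and $1-x^2+5a^2y^2/x^2\approx 1.5827$. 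Its off-diagonal entries have squared modulus $5y^2(x^2-a^2)^2/x^2\approx 1.7471$.
\item Its determinant is therefore $\approx -0.58<0$, so $T_1+\sqrt5\,T_2$ is not hyponormal.
\end{itemize}

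Carried out with your threshold, your plan is a complete proof of the corrected theorem. Since $P>0$ always, no special case at $x^2=2a^2$ arises. Inside $(0,1)^3$, the conditions $y<1$ and $y>\sqrt{(1-x^2)/(1-2a^2)}$ force $x^2>2a^2$. The corrected threshold exceeds $\sqrt{(1-x^2)/(1-2a^2)}$ exactly when $x^2>2a^2$. So both behaviours genuinely occur in $\mathfrak{R}$: for $a=\frac12$, $x=\frac{9}{10}$, the shift is weakly hyponormal for $y\in(0.616,0.785]$ and not for $y\in(0.785,1)$.
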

\bigskip

\subsection{Partition of the Open Unit Cube $(0,1)^3$ into Sub-regions}

The results in Subsections \ref{Sub36}, \ref{Sub37} and \ref{Sub38} will allow us to draw a partition of the open unit cube $(0,1)^3$ into regions and sub-regions which describe the structural properties of the class $\mathcal{W}$; that is, we will describe in detail the conditions under which a $2$--variable weighted shift $W_{(\alpha,\beta)}(a,x,y)$ is subnormal, hyponormal, semi-hyponormal, and weakly hyponormal. \ In Figure 7, we capture all the different possibilities in a graph. \ 

First, since the significant regions of interest appear when $a<\sqrt{\frac{1}{2}}$, and the relative positions of those regions and their sub-regions do not change as we move $a$, we fix $a$ to be $\dfrac{1}{2}$ and we focus on the pairs $(x,y)$ lying in the open rectangle $(0.45, 0.66) \times (0.95, 1.00)$. \ For instance, the region below the blue curve in Figure 7 corresponds to the subnormality of $W_{(\alpha,\beta)}(a,x,y)$, while for $(x,y)$ in the region between the blue and the red curves, the $2$--variable weighted shift is hyponormal but not subnormal. \ Similarly, the region above the red curve describes the non-hyponormality of $W_{(\alpha,\beta)}(a,x,y)$. \ Within that region, we identify two curves (in green and brown) which break the region of non-hyponormality into four sub-regions: $SH \sim H$ denotes the set of pairs $(x,y)$ that give rise to a semi-hyponormal $W_{(\alpha,\beta)}(a,x,y)$ which is not hyponormal; similarly, $WH \sim SH$ stands for weakly hyponormal and not hyponormal, $\sim SH \; \& \;\sim WH$ represents the case non-hyponormal and non-semi-hyponormal, and finally, $(SH \; \& \; WH) \sim H$ stands for non-hyponormal $W_{(\alpha,\beta)}(a,x,y)$ which are simultaneously semi-hyponormal and weakly hyponormal.

As a result, for an arbitrary point $(a,x,y) \in (0,1)^3$, we can precisely identify whether the associated $W_{(\alpha,\beta)}(a,x,y)$ is subnormal, hyponormal, semi-hyponormal or weakly hyponormal. \ 

Finally, the reader may have already noticed that, inside the quadrant $x^2+y^2<1$, and irrespective of the value of $a$ that satisfies $ay<x$, the $2$--variable weighted shift $W_{(\alpha,\beta)}(a,x,y)$ is always subnormal. 

\begin{figure}[th]
\includegraphics{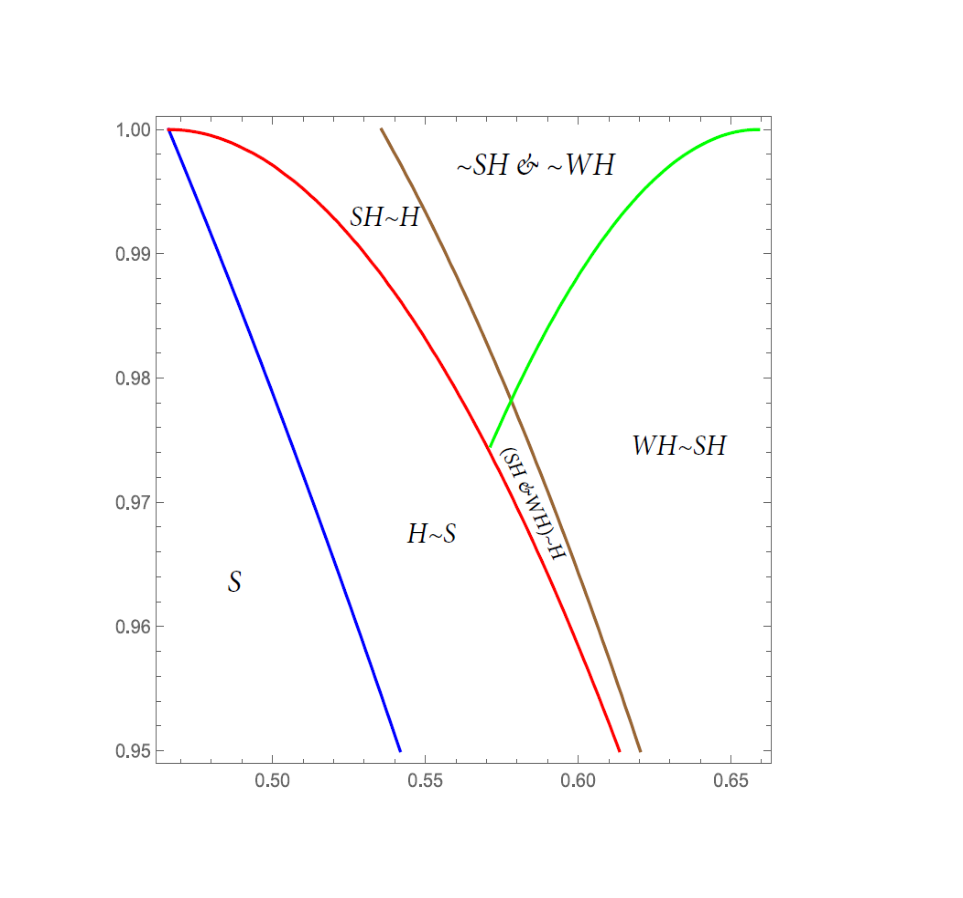}
\caption{Partition of the open unit cube $(0,1)^3$ into sub-regions. \ The parameters $x$ and $y$ are represented on the horizontal and vertical axes, respectively.}
\end{figure}


\subsection{The Case of the Drury-Arveson Shift}

We first recall the weight diagram for the Drury-Arveson $2$--variable weighted shift $DA \equiv (T_1,T_2)$, shown in Figure 8. \ As is well known, $DA$ is a commuting pair of mutually unitarily equivalent subnormal operators $T_1$ and $T_2$. \ Moreover, for a fixed $\ell \ge 1$, the restriction of $T_1$ to the reducing subspace $\bigvee \{e_{(k.\ell)}: k \ge 0 \}$ is the Agler shift $A_{\ell}\equiv \shift(\sqrt{\frac{1}{\ell}}, \sqrt{\frac{2}{\ell+1}}, \sqrt{\frac{3}{\ell+2}}, \ldots)$. \ It is also well-known that $DA$ is not (jointly) hyponormal, as proved in \cite[Section 9]{CuYo6}. \ We will now establish that $DA$ is not even semi-hyponormal.

\setlength{\unitlength}{1mm} \psset{unit=1mm}

\begin{figure}[th]
\begin{center}
\begin{picture}(150,90)

\rput(-10,40){

\psline{->}(67,13)(87,13)
\put(76,8){${T}_1$}
\psline{->}(34,45)(34,65) 
\put(28,54){$\rm{T}_2$}
\put(36,39){\footnotesize{$(0,1)$}}
\put(36,59){\footnotesize{$(0,2)$}}
\put(36,79){\footnotesize{$(0,3)$}}

\psline{->}(45,20)(110,20)
\psline(45,40)(108,40)
\psline(45,60)(108,60)
\psline(45,80)(108,80)

\psline{->}(45,20)(45,90)
\psline(65,20)(65,87)
\psline(85,20)(85,87)
\psline(105,20)(105,87)

\put(38,16){\footnotesize{$(0,0)$}}
\put(61,16){\footnotesize{$(1,0)$}}
\put(81,16){\footnotesize{$(2,0)$}}
\put(101,16){\footnotesize{$(3,0)$}}

\put(52,21){\footnotesize{$1$}}
\put(72,21){\footnotesize{$1$}}
\put(92,21){\footnotesize{$1$}}
\put(106,21){\footnotesize{$\cdots$}}
\put(116,21){\footnotesize{$\delta_1$}}

\put(53,43){\footnotesize{$\sqrt{\frac{1}{2}}$}}
\put(73,43){\footnotesize{$\sqrt{\frac{2}{3}}$}}
\put(93,43){\footnotesize{$\sqrt{\frac{3}{4}}$}}
\put(106,43){\footnotesize{$\cdots$}}
\put(116,41){\footnotesize{$dt$}}

\put(53,63){\footnotesize{$\sqrt{\frac{1}{3}}$}}
\put(73,63){\footnotesize{$\sqrt{\frac{2}{4}}$}}
\put(93,63){\footnotesize{$\sqrt{\frac{3}{5}}$}}
\put(106,63){\footnotesize{$\cdots$}}
\put(116,61){\footnotesize{$2(1-t)dt$}}

\put(53,83){\footnotesize{$\sqrt{\frac{1}{4}}$}}
\put(73,83){\footnotesize{$\sqrt{\frac{2}{5}}$}}
\put(93,83){\footnotesize{$\sqrt{\frac{3}{6}}$}}
\put(106,83){\footnotesize{$\cdots$}}
\put(116,81){\footnotesize{$3(1-t)^2dt$}}

\put(116,89){\footnotesize{\underline{Berger measure}}}


\put(46.3,28){\footnotesize{$1$}}
\put(46.3,48){\footnotesize{$1$}}
\put(46.3,68){\footnotesize{$1$}}
\put(46.3,86){\footnotesize{$\vdots$}}

\put(65.2,28){\footnotesize{$\sqrt{\frac{1}{2}}$}}
\put(65.3,48){\footnotesize{$\sqrt{\frac{2}{3}}$}}
\put(65.3,68){\footnotesize{$\sqrt{\frac{3}{4}}$}}
\put(66,86){\footnotesize{$\vdots$}}

\put(85.2,28){\footnotesize{$\sqrt{\frac{1}{3}}$}}
\put(85.3,48){\footnotesize{$\sqrt{\frac{2}{4}}$}}
\put(85.3,68){\footnotesize{$\sqrt{\frac{3}{5}}$}}
\put(86,86){\footnotesize{$\vdots$}}

}

\end{picture}
\end{center}
\caption{Weight diagram of the Drury-Arveson shift.}
\end{figure}

\begin{theorem}
The Drury-Arveson shift is not semi-hyponormal.
\end{theorem}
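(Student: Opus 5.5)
The plan is to use the homogeneous decomposition of Theorem \ref{Thm6} and show that semi-hyponormality already fails on the reducing subspace $\mathcal{K}(1)$. Since $\sqrt{L}$ and $\sqrt{R}$ are both reduced by each $\mathcal{K}(n)$, we have $\sqrt{L}|_{\mathcal{K}(n)}=\sqrt{L|_{\mathcal{K}(n)}}$, and likewise for $R$. Hence it suffices to exhibit one $n$ and one $2\times 2$ summand in (\ref{Decomposition}) on which $\sqrt{L}-\sqrt{R}$ is not positive semidefinite.

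First I record the Drury--Arveson weights in closed form, read off from Figure 8:
$$\alpha_{(k_1,k_2)}=\sqrt{\tfrac{k_1+1}{k_1+k_2+1}},\qquad \beta_{(k_1,k_2)}=\sqrt{\tfrac{k_2+1}{k_1+k_2+1}}.$$
Then I take $n=1$ in Theorem \ref{Thm6}.

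\begin{itemize}
\item The $1\times 1$ summands of $L|_{\mathcal{K}(1)}$ are $\alpha_{(0,1)}^2=\tfrac12$ and $\beta_{(1,0)}^2=\tfrac12$. The corresponding summands of $R$ are $0$, so there is no obstruction there.
\item The single $2\times 2$ summand of $L|_{\mathcal{K}(1)}$ is
$$\left(\begin{array}{cc}\alpha_{(1,0)}^2 & \alpha_{(0,1)}\beta_{(1,0)}\\ \alpha_{(0,1)}\beta_{(1,0)} & \beta_{(0,1)}^2\end{array}\right)=\left(\begin{array}{cc}1&\tfrac12\\ \tfrac12&1\end{array}\right).$$
\item The corresponding summand of $R|_{\mathcal{K}(1)}$ is the rank-one matrix $\left(\begin{array}{cc}1&1\\1&1\end{array}\right)$, since $\alpha_{(0,0)}=\beta_{(0,0)}=1$.
\end{itemize}

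The $L$ summand is positive, with eigenvalues $\tfrac32$ and $\tfrac12$ and eigenvectors $(1,1)$ and $(1,-1)$. So $L\ge 0$ holds on $\mathcal{K}(1)$, and the failure must come from the inequality $\sqrt{L}\ge\sqrt{R}$.

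Next I compute both square roots. Both matrices are diagonalized by the same unitary $U$ used in Subsection \ref{Sub37}. Alternatively, formula (\ref{squareroot2}) gives $\sqrt{R}=R/\sqrt{\tr R}=\tfrac{1}{\sqrt2}R$ directly. Either way, $\sqrt{L}$ has eigenvalue $\sqrt{3/2}$ on $(1,1)$, and $\sqrt{R}$ has eigenvalue $\sqrt2$ on $(1,1)$. Testing $\sqrt{L}-\sqrt{R}$ on the vector $v=(1,1)/\sqrt2$ gives
$$\langle(\sqrt{L}-\sqrt{R})v,v\rangle=\sqrt{3/2}-\sqrt{2}<0.$$
Therefore $\sqrt{L}|_{\mathcal{K}(1)}\not\ge\sqrt{R}|_{\mathcal{K}(1)}$, and by Definition \ref{def11}(i) the shift $DA$ is not semi-hyponormal.

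I do not expect a genuine obstacle. The only care needed is bookkeeping: matching the indices in (\ref{Decomposition}) with the Drury--Arveson weight diagram, and ensuring the chosen block sits inside a reducing subspace so that the square roots can be computed blockwise. That is exactly what Theorem \ref{Thm6} provides.
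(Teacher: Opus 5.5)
Your proof is correct and is essentially the paper's argument: both restrict to the $2\times 2$ summand of $\mathcal{K}(1)$ from Theorem~\ref{Thm6}, where $L$ and $R$ become $\left(\begin{array}{cc}1&\frac12\\ \frac12&1\end{array}\right)$ and $\left(\begin{array}{cc}1&1\\1&1\end{array}\right)$. The only difference is cosmetic: the paper concludes from $\det\bigl(\sqrt{L|_{\mathcal{K}(1)}}-\sqrt{R|_{\mathcal{K}(1)}}\bigr)=\frac{\sqrt3}{2}-1<0$, whereas you evaluate the quadratic form on the common eigenvector $(1,1)/\sqrt2$ and get $\sqrt{3/2}-\sqrt2<0$ (your bottom-right entry $\beta_{(0,1)}^2$ is also the correctly indexed one).
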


\begin{proof}
To show that $DA$ is not semi-hyponormal, it is enough to establish that the restriction of $\sqrt{L} - \sqrt{R}$ to the reducing subspace $\mathcal{K}(1)$ is not a positive semi-definite $2 \times 2$ matrix. \ Since
$$
L|_{\mathcal{K}(1)} = \left(
\begin{array}{cc}
\alpha_{(1,0)}^2 & \alpha_{(0,1)}\beta_{(1,0)} \\
\alpha_{(0,1)}\beta_{(1,0)} & \beta_{(1,0)}^2
\end{array}
\right) = \left(
\begin{array}{cc}
1 & \frac{1}{2} \\
\frac{1}{2} & 1
\end{array}
\right),  
$$
we can use (\ref{squareroot2}) to obtain
$$
\sqrt{L|_{\mathcal{K}(1)}} = \dfrac{1}{\sqrt{2 + \sqrt{3}}} \cdot \left(
\begin{array}{cc}
1 + \frac{\sqrt{3}}{2} & \frac{1}{2} \\
\frac{1}{2} & 1 + \frac{\sqrt{3}}{2}
\end{array}
\right). 
$$
Similarly,
$$
R|_{\mathcal{K}(1)} = \left(
\begin{array}{cc}
\alpha_{(0,0)}^2 & \alpha_{(0,0)}\beta_{(0,0)} \\
\alpha_{(0,0)}\beta_{(0,0)} & \beta_{(0,0)}^2
\end{array}
\right) = \left(
\begin{array}{cc}
1 & 1 \\
1 & 1
\end{array}
\right)  
$$
and 
$$
\sqrt{R|_{\mathcal{K}(1)}} = \left(
\begin{array}{cc}
\sqrt{\frac{1}{2}} & \sqrt{\frac{1}{2}} \\
\sqrt{\frac{1}{2}} & \sqrt{\frac{1}{2}}
\end{array}
\right).  
$$
Therefore,
$$
\sqrt{L|_{\mathcal{K}(1)}} - \sqrt{R|_{\mathcal{K}(1)}} = \dfrac{1}{\sqrt{2 + \sqrt{3}}} \cdot \left(
\begin{array}{cc}
1 + \frac{\sqrt{3}}{2} & \frac{1}{2} \\
\frac{1}{2} & 1 + \frac{\sqrt{3}}{2}
\end{array}
\right) - \left(
\begin{array}{cc}
\sqrt{\frac{1}{2}} & \sqrt{\frac{1}{2}} \\
\sqrt{\frac{1}{2}} & \sqrt{\frac{1}{2}}
\end{array}
\right).  
$$
It is now straightforward to verify that
$$
\det(\sqrt{L|_{\mathcal{K}(1)}} - \sqrt{R|_{\mathcal{K}(1)}}) \cong -0.133975 < 0.  \qed
$$
\end{proof}


\end{document}